\def\blfootnote{\gdef\@thefnmark{}\@footnotetext}
\numberwithin{equation}{section}
\newtheorem{thm}[equation]{Theorem}
\newtheorem{lem}[equation]{Lemma}
\newtheorem{prop}[equation]{Proposition}
\newtheorem{introthm}{Theorem}
\theoremstyle{definition}
\theoremstyle{remark}
\newtheorem{rem}[equation]{Remark}
\newtheorem{example}[equation]{Example}
\newcommand{\C}{\mathcal{C}}
\newcommand{\D}{\mathcal{D}}
\newcommand{\E}{\mathcal{E}}
\newcommand{\K}{\mathcal{K}}
\newcommand{\U}{\mathcal{U}}
\newcommand{\id}{\operatorname{id}}
\newcommand{\coker}{\operatorname{coker}}
\newcommand{\coim}{\operatorname{coim}}
\newcommand{\colim}{\operatorname{colim}}
\newcommand{\hocolim}{\operatorname{hocolim}}
\newcommand{\disc}{\operatorname{disc}}
\newcommand{\sd}{\operatorname{sd}}
\newcommand{\Ex}{\operatorname{Ex}}
\newcommand{\Sing}{\operatorname{Sing}}
\begin{document}

\title{Eilenberg--Mac Lane Spaces for Topological Groups}
\author{Ged Corob Cook\thanks{This work was done partly while supported by EPSRC grant EP/N007328/1, and partly while supported by ERC grant 336983 and the Basque government grant IT974-16.}}
\date{}
\renewcommand\footnotemark{}
\maketitle

\begin{abstract}
The goal of this paper is to establish a topological version of the notion of an Eilenberg--Mac Lane space. If $X$ is a pointed topological space, $\pi_1(X)$ has a natural topology coming from the compact-open topology on the space of maps $S^1 \to X$. In general the construction does not produce a topological group because it is possible to create examples where the group multiplication $\pi_1(X) \times \pi_1(X) \to \pi_1(X)$ is discontinuous. This failure to obtain a topological group has been noticed by others, for example Fabel. However, if we work in the category of compactly generated, weakly Hausdorff spaces, we may retopologise both the space of maps $S^1 \to X$ and the product $\pi_1(X) \times \pi_1(X)$ with compactly generated topologies to get that $\pi_1(X)$ is a group object in this category. Such group objects are known as $k$-groups.

Next we construct the Eilenberg--Mac Lane space $K(G,1)$ for any totally path-disconnected $k$-group $G$. The main point of this paper is to show that, for such a $G$, $\pi_1(K(G,1))$ is isomorphic to $G$ in the category of $k$-groups.

All totally disconnected locally compact groups are $k$-groups and so our results apply in particular to profinite groups. This answers questions that have been raised by Sauer.

We also show that there are Mayer--Vietoris sequences and a Seifert--van Kampen theorem in this theory.

The theory requires a careful analysis using model structures and other homotopical structures on cartesian closed categories as we shall see that no theory can be comfortably developed in the classical world.
\end{abstract}

\blfootnote{2010 Mathematics Subject Classification: 55P20; 18G55; 18G30.}
\blfootnote{Keywords: Eilenberg--Mac Lane space; $k$-group; homotopical algebra.}

\section*{Introduction}

%\begin{quote}
%If $G$ itself is locally well-behaved... the fibration is locally trivial; and it can be argued that in the converse case local triviality is not an appropriate concept. \hfill G. Segal, \cite{Segal}
%\end{quote}

There are three constructions for discrete groups $G$ that define the same classifying space $BG$. This space is an Eilenberg--Mac Lane space $K(G,1)$; it represents cohomology $H^1(X,G)$; it classifies $G$-torsors. This is described in detail in the preface to \cite{LurieHTT}.

For topological groups, the third of these definitions makes sense, and indeed there are standard constructions of $BG$ for topological groups $G$ that satisfy this definition. In fact there are two slightly different definitions of classifying space possible in this context. A $G$-torsor is a map $X \to Y$ of spaces together with a free $G$-action on $X$ such that $X/G \cong Y$; some definitions require also that the $G$-action be locally trivial. With this local triviality condition, the classifying space for $G$-torsors is given by Milnor's construction. An alternative classifying space contruction is given by Segal in \cite{Segal}: by considering $G$ as a one-object topological category, we may take the geometric realisation of the nerve of $G$. The question of what this space (usually also denoted $BG$) actually classifies is rather more subtle; see \cite{Moerdijk} for details on this.

On the other hand, there has not previously been any way to apply the first two definitions to the situation of topological groups. Here we construct Eilenberg--Mac Lane spaces for totally path-disconnected groups, in Theorem \ref{EMspace}. Being totally path-disconnected is crucial because the lack of any `higher-dimensional homotopy' for such spaces allows us to control the homotopy of the new spaces we will construct from them.

The paper is structured as follows. Section \ref{kmodules} gives some background on the category of compactly generated, weakly Hausdorff spaces and group and module objects in this category.

Section \ref{tophomotopy} motivates our definition of homotopy groups and related structures, over a more common definition in the literature. For a pointed topological space $X$, there is a natural compact-open topology on the space of continuous pointed maps $S^1 \to X$, and we can give the fundamental group $\pi_1(X)$ the quotient topology from this. In general the construction does not produce a topological group because it is possible to create examples where the product map $\pi_1(X)\times \pi_1(X)\to\pi_1(X)$ is discontinuous. This failure to obtain a topological group has been noticed by other authors including for example Fabel. However, the novelty here is that our $\pi_1(X)$ is a group object in the category of compactly generated weakly Hausdorff topological spaces. Such groups are called $k$-groups. In particular, it is always the case that the group multiplication is continuous when one retopologises the product $\pi_1(X)\times\pi_1(X)$ with the compactly generated topology. This raises the prospect of constructing a topological space $X$ with chosen homotopy groups.

We take advantage of this better categorical behaviour by largely restricting attention in this paper to the category $\U$ of compactly generated, weakly Hausdorff spaces.

Section \ref{model} recalls the model category theory and homotopical structures will need from \cite{Myself}. We have two different indispensable homotopical structures on the category of simplicial spaces $s\U$, which we call the compact Hausdorff structure and the regular structure; these are defined below. Weak equivalences in the former structure are also weak equivalences in the latter, while weak equivalences in the regular structure induce isomorphisms of the topological homotopy groups described above.

We give here definitions for the singular simplicial space functor $\underline{\Sing}: \U \to s\U$ and geometric realisation $\lvert - \rvert: s\U \to \U$. These form an adjoint pair analogously to the classical singular simplicial set functor and geometric realisation. Understanding these functors, together with the left derived functor $L\lvert - \rvert$ of $\lvert - \rvert$, is crucial to our approach.

Sections \ref{Sing} and \ref{excision} are the technical heart of the paper. The former consists primarily of proving a continuous analogue of the Seifert--van Kampen Theorem:

\begin{introthm}[Theorem \ref{singhoco}]
If $C$ is an open cover of $X \in \U$, write $C'$ for the poset of finite intersections of sets in $C$, ordered by inclusion. Then $\underline{\Sing}(X)$ is weakly equivalent (in the regular structure on $s\U$) to the homotopy colimit (in the compact Hausdorff structure on $s\U$) of $\{\underline{\Sing}(U)\}_{U \in C'}$.
\end{introthm}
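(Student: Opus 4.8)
The plan is to construct a natural comparison map
$$\phi \colon \hocolim_{U \in C'} \underline{\Sing}(U) \longrightarrow \underline{\Sing}(X),$$
assembled from the maps $\underline{\Sing}(U) \to \underline{\Sing}(X)$ induced by the inclusions $U \hookrightarrow X$ (these are compatible over the poset $C'$), and to prove that $\phi$ is a weak equivalence in the regular structure. Since weak equivalences in the compact Hausdorff structure are a fortiori regular weak equivalences, and since regular weak equivalences are what detect the topological homotopy groups we ultimately care about, the strategy is to factor $\phi$ through the sub-simplicial space $\underline{\Sing}^C(X) \subseteq \underline{\Sing}(X)$ of \emph{small} simplices --- those $\sigma \colon \Delta^n \to X$ whose image lies in some member of $C$ --- and to treat two halves separately: (i) the inclusion $\underline{\Sing}^C(X) \hookrightarrow \underline{\Sing}(X)$ is a regular weak equivalence, and (ii) the corestriction $\hocolim_{C'} \underline{\Sing}(U) \to \underline{\Sing}^C(X)$ is a weak equivalence.

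For (i) I would use barycentric subdivision. For a single $\sigma$ the open sets $\sigma^{-1}(U)$ cover the compact space $\Delta^n$, so by the Lebesgue number lemma some iterate $\sd^k$ makes every face of the subdivided simplex land in a member of $C$; passing to the filtered colimit this is organised by Kan's $\Ex^\infty$, for which the natural map $\underline{\Sing}(X) \to \Ex^\infty \underline{\Sing}(X)$ is a regular weak equivalence and through which the small simplices are cofinal. The real content is to perform the subdivision operators, the natural transformation $\id \to \Ex$, and the attendant homotopies continuously in families and compatibly with the k-ified mapping-space topologies, and to verify that the resulting comparison is an equivalence in the regular structure.

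For (ii) I would take the Bousfield--Kan bar-construction model of the homotopy colimit and compute the homotopy fibre of $\phi$ over a small simplex $\sigma$. Because $C'$ is closed under finite intersection, the subposet $C'_\sigma = \{\, U \in C' : \im \sigma \subseteq U \,\}$ is down-directed --- given $U, V \in C'_\sigma$ we have $U \cap V \in C'_\sigma$ and $U \cap V \subseteq U, V$ --- so its nerve is contractible. A Quillen Theorem~A style argument, carried out in families over the space of small simplices in each degree, then shows that this corestriction of $\phi$ is a weak equivalence. Together (i) and (ii) give that $\phi$ is a regular weak equivalence.

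I expect the main obstacle to be step (i) inside the category $\U$: the Lebesgue number of $\sigma$ varies with $\sigma$, so no single subdivision depth makes an entire continuous family of simplices small, and one must check that the subdivision machinery and its homotopies nevertheless glue into continuous maps of compactly generated weakly Hausdorff spaces that induce isomorphisms on the topological homotopy groups. Coordinating the two homotopical structures --- computing the homotopy colimit in the compact Hausdorff structure while establishing the equivalence in the regular structure --- is the delicate point, and is precisely what cannot be arranged in the classical category of spaces.
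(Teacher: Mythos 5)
Your step (i) is essentially the paper's own first move (Lemma \ref{colimwe}): the inclusion of the small-simplex subobject $\underline{\Sing}'(X) \to \underline{\Sing}(X)$ is handled there exactly by the $\Ex^\infty$/subdivision mechanism you describe, and the family-uniformity problem you flag (the Lebesgue number varying with $\sigma$) is resolved by observing that each $\Ex^m(\underline{\Sing}'(X))_n$ is \emph{open} in $\underline{\Sing}(X)_n$ in the compact-open topology, so any compact Hausdorff test space lands in some finite stage; this in fact yields a weak equivalence in the compact Hausdorff structure, not merely the regular one. So that half is sound and matches the paper.

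Step (ii) contains the genuine gap. The fibrewise argument you propose --- bar-construction model, contractibility of the nerve of $C'_\sigma = \{U \in C' : \im \sigma \subseteq U\}$, Quillen Theorem A ``in families'' --- is the classical discrete argument (it is how \cite[Proposition A.3.2]{LurieHA} goes), and it cannot work here. The paper's Example \ref{pseudo} is a direct counterexample to what such an argument would prove: take $X$ the pseudo-arc (compact, connected, totally path-disconnected) and $C$ any open cover not containing $X$. Every singular simplex in $X$ is constant, so every simplex is small and every poset $C'_\sigma$ is cofiltered with contractible nerve --- your fibrewise hypotheses hold on the nose --- yet the comparison map $\hocolim_{C'} \underline{\Sing}(U) \to \underline{\Sing}'(X) = \underline{\Sing}(X)$ is \emph{not} a weak equivalence in the compact Hausdorff structure, because any compact family in the homotopy colimit has image inside a single $U$, while $X$ is connected and admits no splitting through such pieces. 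The failure is invisible simplex-by-simplex: the obstruction is that $\sigma \mapsto C'_\sigma$ is not locally constant, so the space of simplices does not decompose according to the choice of $U$, and there is no continuous ``in families'' version of the nerve-contractibility argument. Moreover, a fibrewise argument of this kind does not see the difference between the compact Hausdorff and regular structures, so if it worked it would prove the (false) compact Hausdorff statement.

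What the paper does instead, and what your outline is missing, is a genuinely different mechanism for step (ii): factor the comparison map as a trivial cofibration followed by a fibration $\colim_{C'} Q(\underline{\Sing}(U)) \to Z \to \underline{\Sing}(X)$, and prove (Proposition \ref{beta}) that $Z \to \underline{\Sing}(X)$ is a trivial fibration \emph{in the regular structure}. Concretely, for every finite triangulation $\Sigma$ of a ball, every compact subspace $K$ of the pullback $\{\partial\Sigma,Z\} \times_{\{\partial\Sigma,\underline{\Sing}(X)\}} \{\Sigma,\underline{\Sing}(X)\}$ admits a \emph{finite compact cover} $K_1,\dots,K_j$ (via the Shrinking Lemma) such that each $K_i$ lifts to $\{\Sigma,Z\}$; this $\beta$-epimorphism condition is exactly what regular trivial fibrations require and exactly what the compact Hausdorff structure does not grant (there one would need a lift of all of $K$, which the pseudo-arc forbids). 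The lifting itself is an induction over the cell structure of $Z$, gluing fat cells using the fibration property. That finite-cover-level lifting, rather than any nerve-contractibility statement, is the real content of the theorem, and it is the step your proposal has no mechanism to supply.
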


In the latter, analogously to our topological homotopy groups, we consider the topological singular homology groups of a space, and prove an Excision Theorem and a Mayer--Vietoris sequence.

\begin{introthm}[Theorem \ref{exc}]
Given subspaces $A \subseteq B \subseteq X$ in $\U$ with $A$ closed and $B$ open, the inclusion $(X \setminus A, B \setminus A)֓ \to (X,B)$ induces isomorphisms of the homology group objects $H_n(X \setminus A, B \setminus A)֓ \to H_n(X,B)$ for all $n$. Equivalently, for open subspaces $A, B \subseteq X$ covering $X$, the inclusion $(B,A \cap B)֓ \to (X,A)$ induces isomorphisms of homology group objects $H_n(B,A \cap B) \to H_n(X,A)$ for all $n$.
\end{introthm}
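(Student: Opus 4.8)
The plan is to deduce excision directly from the continuous Seifert--van Kampen theorem (Theorem \ref{singhoco}), rather than through a subdivision-into-small-simplices argument. First I would observe that the two formulations are interchangeable by relabelling. Given $A \subseteq B \subseteq X$ with $A$ closed and $B$ open, the sets $X \setminus A$ and $B$ are open and cover $X$ (as $A \subseteq B$), with $(X \setminus A) \cap B = B \setminus A$; conversely, from an open cover $\{A,B\}$ of $X$ one recovers the first setting by taking the closed set $X \setminus B \subseteq A$ (so that $(X\setminus(X\setminus B), A\setminus(X\setminus B)) = (B, A\cap B)$ and $(X,A)$ arise). It therefore suffices to treat the second formulation, with $A, B$ open and $X = A \cup B$.

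Next I would apply Theorem \ref{singhoco} to the open cover $\{A,B\}$. Here the poset $C'$ of finite intersections consists of $A$, $B$ and $A \cap B$, with $A \cap B$ below each of $A$ and $B$; this is the span shape, so the homotopy colimit of $\{\underline{\Sing}(U)\}_{U \in C'}$ in the compact Hausdorff structure is precisely the homotopy pushout of $\underline{\Sing}(A) \leftarrow \underline{\Sing}(A\cap B) \to \underline{\Sing}(B)$. The theorem then supplies a regular weak equivalence
\[
\hocolim\bigl(\underline{\Sing}(A) \leftarrow \underline{\Sing}(A\cap B) \to \underline{\Sing}(B)\bigr) \xrightarrow{\ \sim\ } \underline{\Sing}(X),
\]
in other words a homotopy pushout square with corners $\underline{\Sing}(A\cap B)$, $\underline{\Sing}(A)$, $\underline{\Sing}(B)$, $\underline{\Sing}(X)$, the right-hand and lower maps being induced by the inclusions into $X$.

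I would then pass to homology. Writing $C_*$ for the functor that sends a simplicial space to the normalised chain complex of $k$-modules computing its homology (levelwise free $k$-module followed by the Dold--Kan normalisation), the relative groups are by definition the homology of the cofibres $C_*\underline{\Sing}(B) / C_*\underline{\Sing}(A\cap B)$ and $C_*\underline{\Sing}(X)/C_*\underline{\Sing}(A)$. The essential point is that $C_*$ carries the homotopy pushout square above to a homotopy pushout square of chain complexes; the cofibres of its two horizontal maps then agree, yielding a natural isomorphism $H_n(B, A\cap B) \cong H_n(X,A)$ induced by the inclusion $(B, A\cap B)\to(X,A)$, which is the assertion. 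Equivalently, the same square produces the Mayer--Vietoris long exact sequence, and excision is simply its cofibre reformulation.

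The main obstacle is exactly this last step: converting the homotopical input of Theorem \ref{singhoco} into the stated algebraic output. Two compatibilities must be checked. First, $H_n$ must be invariant under regular weak equivalences, so that the equivalence produced by Theorem \ref{singhoco} may be used; this should follow from the definition of $H_n$, which is designed to detect only the regular homotopy type. Second, I must show that applying $C_*$ to a homotopy pushout computed in the \emph{compact Hausdorff} structure gives a homotopy pushout of chain complexes, so that the parallel cofibres genuinely agree. This is where care is needed in reconciling the two homotopical structures: one wants the levelwise free $k$-module functor to be (essentially) left Quillen, hence to preserve homotopy colimits, and one must ensure the span is cofibrant enough that its homotopy pushout is computed by the honest pushout of chains. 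Once these compatibilities are established, the cofibre comparison, and with it excision, is formal.
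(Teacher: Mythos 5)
Your reduction between the two formulations and your use of Theorem \ref{singhoco} on the cover $\{A,B\}$ match the paper's overall strategy, and your second ``compatibility'' is indeed unproblematic: the free-module functor is left Quillen for the compact Hausdorff structures, so it carries the compact Hausdorff homotopy pushout of the span to a homotopy pushout of chain complexes (this is Lemma \ref{hocolim} together with the fact that $LF$ preserves homotopy colimits), and your final cofibre/long-exact-sequence comparison is essentially how the paper deduces Theorem \ref{exc}. The gap is your first compatibility, which you dismiss as following ``from the definition of $H_n$'': it does not. $H_n$ is defined as $LH_n \circ LF \circ \underline{\Sing}$ with $LF$ derived in the \emph{compact Hausdorff} structure; the definition guarantees invariance under compact Hausdorff weak equivalences only, because $LF$ preserves those and compact Hausdorff weak equivalences in $c(\U Ab)$ are regular ones. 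Nothing gives that $LF$ sends \emph{regular} weak equivalences of simplicial spaces to homology isomorphisms: the regular structure has no known cofibrations, $F$ is not left Quillen for it, and no Hurewicz-type theorem is available. Since the equivalence supplied by Theorem \ref{singhoco} is only a regular one --- and Example \ref{pseudo} shows it is genuinely not a compact Hausdorff weak equivalence in general --- applying your chains functor $C_*$ to it is exactly the unjustified step. This is precisely the obstruction the paper flags at the start of Section \ref{excision}: if the Seifert--van Kampen equivalence were a compact Hausdorff equivalence, excision would be the easy formal corollary you describe; since it is not, ``we have to work a bit harder.''

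What the paper does instead is prove the chain-level statement directly (Theorem \ref{excisionwe}): it interpolates between $Y^0 = \colim_{U \in C'} Q(\underline{\Sing}(U))$ and $Q(\underline{\Sing}'(X))$ by a transfinite sequence $(Y^\alpha)$ of modifications of two kinds --- filling empty horns (compact Hausdorff trivial cofibrations, harmless after applying $F$) and ``gluing fat cells'' along finite compact covers --- and shows each gluing step becomes a regular weak equivalence after applying $F$, because the mapping cone of $F(Y^{\alpha-1}) \to F(Y^\alpha)$ is homotopic to a \v{C}ech-type complex with terms $\bigoplus F(K_{i_1} \cap \cdots \cap K_{i_r})$ that is exact in the regular structure; limit ordinals are handled by Lemma \ref{colimexact}. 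Only then does the cofibre comparison you propose go through. To repair your argument you would either have to reproduce this construction or prove the general claim that regular weak equivalences of simplicial spaces induce isomorphisms of homology group objects, which is established nowhere in the paper.
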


\begin{introthm}[Theorem \ref{MV}]
For open subspaces $A, B \subseteq X$ covering $X$ there is a long exact sequence of homology group objects $$\cdots \to H_{n+1}(X) \to H_n(A \cap B) \to H_n(A) \oplus H_n(B) \to H_n(X) \to \cdots.$$
\end{introthm}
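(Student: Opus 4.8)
The plan is to deduce the Mayer--Vietoris sequence from the Excision Theorem (Theorem \ref{exc}) together with the long exact sequence of a pair, following the classical Barratt--Whitehead weaving argument, but carried out in the category of homology group objects rather than in ordinary abelian groups. First I would record the data: the cover $\{A,B\}$ of $X$ gives an inclusion of pairs $(B, A\cap B) \to (X,A)$, and by Theorem \ref{exc} this induces isomorphisms $H_n(B, A\cap B) \xrightarrow{\cong} H_n(X,A)$ for all $n$. Writing out the long exact sequences of the pairs $(X,A)$ and $(B, A\cap B)$ and the commutative ladder between them induced by the inclusions, the vertical maps on the relative terms are precisely these excision isomorphisms.

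Next I would splice the two sequences together. Using the two inclusions of $A\cap B$ I define $H_n(A\cap B) \to H_n(A)\oplus H_n(B)$, using the two inclusions into $X$ I define the difference map $H_n(A)\oplus H_n(B) \to H_n(X)$ (legitimate since the homology objects are abelian group objects), and I define the connecting morphism $\Delta \colon H_n(X)\to H_{n-1}(A\cap B)$ as the composite $H_n(X)\to H_n(X,A) \xrightarrow{\cong} H_n(B,A\cap B) \xrightarrow{\partial} H_{n-1}(A\cap B)$, where the middle arrow is the inverse of the excision isomorphism. Exactness at each term is then, formally, a diagram chase through the ladder.

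The real work, and the step I expect to be the main obstacle, is that this homological algebra must be interpreted and verified in the category of homology group objects, which, unlike abelian groups, need not be abelian: a continuous bijection of $k$-groups need not be an isomorphism, and the kernels and images carry subspace topologies that have to be controlled. I would therefore check that each of the three maps above is genuinely a morphism of group objects (in particular that $\Delta$ is continuous, which relies on the excision isomorphism being an isomorphism of group objects rather than merely of underlying groups) and that exactness holds in the appropriate topological sense, working throughout with the exact structure on group objects set up earlier and using that regular weak equivalences induce isomorphisms of these homology objects. As an independent cross-check, since the two-set cover $\{A,B\}$ has finite-intersection poset $\{A, B, A\cap B\}$, Theorem \ref{singhoco} identifies $\underline{\Sing}(X)$ up to regular weak equivalence with the homotopy pushout of $\underline{\Sing}(A) \leftarrow \underline{\Sing}(A\cap B) \to \underline{\Sing}(B)$, and applying the homology functor to this homotopy-cocartesian square yields the same Mayer--Vietoris sequence directly.
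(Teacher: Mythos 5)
Your main argument (the Barratt--Whitehead splicing) is correct, but it takes a genuinely different route from the paper. The paper never forms the ladder of long exact sequences of the pairs $(B,A\cap B)$ and $(X,A)$; it works entirely at the chain level. It takes the commutative square from the proof of Theorem \ref{exc}, with corners $LF(\underline{\Sing}(A\cap B))$, $LF(\underline{\Sing}(A))$, $LF(\underline{\Sing}(B))$ and $F \circ Q(\underline{\Sing}(A+B))$, observes that it is both a pushout and a pullback of chain complexes, so that by a standard fact about quasi-abelian categories it yields a short exact sequence
$$0 \to LF(\underline{\Sing}(A \cap B)) \to LF(\underline{\Sing}(A)) \oplus LF(\underline{\Sing}(B)) \to F \circ Q(\underline{\Sing}(A+B)) \to 0$$
in the regular structure, and then applies $LH_\ast$ (which turns short exact sequences of complexes into long exact sequences in the left heart) together with Theorem \ref{excisionwe}, which identifies the homology of the last term with $H_\ast(X)$. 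Your route instead uses Theorem \ref{exc} as a black box and splices the two long exact sequences of pairs. This is legitimate here, and the ``main obstacle'' you flag largely dissolves: the homology objects $H_n = LH_n \circ LF \circ \underline{\Sing}$ take values by definition in the left heart of $\U Ab$, which \emph{is} an abelian category, so your three maps are automatically morphisms there, the connecting morphisms come from the mapping-cone exact triangles by which the paper defines relative homology, and the Barratt--Whitehead diagram chase is valid in any abelian category. What you should make explicit is the naturality of the long exact sequence of a pair (needed for your ladder to commute), which is formal from the mapping-cone construction. Comparing the two: the paper's chain-level argument is shorter once Theorem \ref{excisionwe} is in hand and produces the connecting morphism explicitly; yours is axiomatic and would apply verbatim to any theory satisfying the exactness and excision axioms later collected in Theorem \ref{generalised}.

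Your proposed ``independent cross-check'', however, has a real gap, and it is precisely the gap the paper warns about at the start of Section \ref{excision}. Theorem \ref{singhoco} identifies $\underline{\Sing}(X)$ with the homotopy pushout of $\underline{\Sing}(A) \leftarrow \underline{\Sing}(A\cap B) \to \underline{\Sing}(B)$ only up to weak equivalence \emph{in the regular structure}, whereas $LF$ is left Quillen for the \emph{compact Hausdorff} structure: there is no formal reason that applying $LF$ (hence homology) to a regular weak equivalence of simplicial spaces yields an isomorphism of homology objects. If the equivalence of Theorem \ref{singhoco} were a compact Hausdorff weak equivalence, Mayer--Vietoris and excision would indeed follow formally, as the paper itself notes; since it is not, the entire content of Section \ref{excision}, culminating in Theorem \ref{excisionwe}, exists to bridge exactly this. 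So the cross-check is not independent: it silently invokes Theorem \ref{excisionwe}, not Theorem \ref{singhoco}.
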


We also show in Theorem \ref{generalised} that this topological singular homology theory satisfies the axioms of a generalised homology theory.

Finally in Section \ref{EM}, we put all the work together to construct the promised Eilenberg--Mac Lane spaces. Specifically, we show:

\begin{introthm}[Theorem \ref{tpd}]
\label{introtpd}
Suppose $X \in s\U$ with $X_n$ totally path-disconnected for all $n$. Then $\underline{\Sing} \circ L\lvert X \rvert$ is weakly equivalent to $X$ in the regular structure.
\end{introthm}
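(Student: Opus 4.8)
The plan is to prove that the derived unit of the adjunction $\lvert-\rvert \dashv \underline{\Sing}$, i.e. the natural comparison $\eta_X\colon X \to \underline{\Sing}\,L\lvert X\rvert$, is a weak equivalence in the regular structure. Everything hinges on one elementary consequence of the hypothesis. If $Y \in \U$ is totally path-disconnected then any continuous map $\Delta^n \to Y$ has path-connected image, hence is constant; so the level $\underline{\Sing}(Y)_n$, the space of maps $\Delta^n \to Y$, consists of the constant maps and is homeomorphic to $Y$, while every face and degeneracy operator is the identity. Thus $\underline{\Sing}(Y) \cong cY$, the constant simplicial space on $Y$. This is exactly where total path-disconnectedness is indispensable: it prevents each level $X_n$ from carrying any homotopy that $\underline{\Sing}$ could detect and transport into the simplicial direction.

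I would then compute as follows. By construction of the derived realisation, $L\lvert X\rvert$ is the homotopy colimit over $\Delta^{\mathrm{op}}$ of the diagram $[n]\mapsto X_n$ (taken in the compact Hausdorff structure), and so
\[
\underline{\Sing}\,L\lvert X\rvert \;\simeq\; \underline{\Sing}\Bigl(\hocolim_{\Delta^{\mathrm{op}}} X_\bullet\Bigr) \;\simeq\; \hocolim_{\Delta^{\mathrm{op}}}\underline{\Sing}(X_\bullet) \;=\; \hocolim_{\Delta^{\mathrm{op}}} c(X_\bullet) \;\simeq\; X.
\]
Here the third equality is the observation above applied levelwise, and the final equivalence uses that the homotopy colimit over $\Delta^{\mathrm{op}}$ of a simplicial object of $s\U$ is computed by the diagonal: the diagonal of the bisimplicial space $(m,n)\mapsto (cX_n)_m = X_n$ is precisely $X$. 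Tracing the identifications, the composite equivalence is the map $\eta_X$.

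The substantial step, and the main obstacle, is the middle equivalence, namely commuting $\underline{\Sing}$ past the homotopy colimit over $\Delta^{\mathrm{op}}$. Theorem~\ref{singhoco} supplies exactly this kind of statement, but only for the poset of intersections of an open cover. To bring it to bear I would replace the $\Delta^{\mathrm{op}}$-homotopy colimit by the skeletal filtration of a Reedy cofibrant model of $X$, so that $L\lvert X\rvert = \colim_n \lvert \mathrm{sk}_n X\rvert$ and each $\lvert \mathrm{sk}_n X\rvert$ is obtained from $\lvert \mathrm{sk}_{n-1} X\rvert$ by a pushout along the closed cofibration $\partial\Delta^n \times \tilde X_n \hookrightarrow \Delta^n \times \tilde X_n$, with $\tilde X_n$ the totally path-disconnected space of nondegenerate $n$-simplices. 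A collar of $\partial\Delta^n$ in $\Delta^n$ then furnishes a two-element open cover of $\lvert \mathrm{sk}_n X\rvert$ to which Theorem~\ref{singhoco} applies, exhibiting $\underline{\Sing}\lvert \mathrm{sk}_n X\rvert$ as the homotopy pushout of $\underline{\Sing}$ of the three pieces. Since cartesian closedness gives $\underline{\Sing}(\Delta^n \times \tilde X_n) \cong \underline{\Sing}(\Delta^n) \times c\tilde X_n \simeq c\tilde X_n$ and likewise for $\partial\Delta^n$, an induction on $n$ identifies $\underline{\Sing}\lvert \mathrm{sk}_n X\rvert$ with $\mathrm{sk}_n X$ up to regular weak equivalence, compatibly with the filtration maps; passing to the sequential colimit, which $\underline{\Sing}$ preserves because any singular simplex out of the compact $\Delta^m$ factors through a finite stage, then yields the claim.

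The delicate points I expect to spend the most effort on are the interaction of the two homotopical structures --- Theorem~\ref{singhoco} produces a regular weak equivalence onto a compact Hausdorff homotopy colimit, so the gluing at each skeletal stage must be organised to respect both --- together with verifying that the collar-thickened cover genuinely meets the hypotheses of Theorem~\ref{singhoco} and that the comparison maps assemble into a map of homotopy-pushout diagrams, so that a gluing lemma applies at every stage.
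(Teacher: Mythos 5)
Your starting point agrees with the paper's: the one essential consequence of total path-disconnectedness is that $\underline{\Sing}(Y) \cong \disc(Y)$, and Theorem \ref{singhoco} is indeed the engine. But your inductive step contains a genuine gap where the two homotopical structures get mixed in a way that nothing in the paper (or your proposal) licenses. At stage $n$, Theorem \ref{singhoco} expresses $\underline{\Sing}\lvert \mathrm{sk}_n X\rvert$, up to \emph{regular} weak equivalence, as a homotopy pushout computed in the \emph{compact Hausdorff} structure of $\underline{\Sing}$ of the two cover pieces and their intersection. Your induction hypothesis identifies $\underline{\Sing}$ of the thickened $(n-1)$-skeleton with $\mathrm{sk}_{n-1}X$, and identifies $\underline{\Sing}(\lvert\partial\Delta^n\rvert \times \tilde X_n)$ with $\partial\Delta^n \times \disc \tilde X_n$, but these identifications are themselves only \emph{regular} weak equivalences (they are instances of the theorem being proved; note also that as literally written your ``likewise for $\partial\Delta^n$'' is false, since $\lvert\partial\Delta^n\rvert$ is not contractible). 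To substitute them into the pushout diagram you need the compact Hausdorff homotopy colimit to carry regular weak equivalences of diagrams to regular weak equivalences, and this is precisely what is unavailable: the regular structure has no cofibrations and hence no homotopy colimits (the Remark after Theorem \ref{singhoco}), the comparison $\partial\Delta^n \to \underline{\Sing}\lvert\partial\Delta^n\rvert$ is genuinely not a compact Hausdorff equivalence (test against $\disc K$ for $K$ a Warsaw circle: locally constant maps see $\pi_0 = \ast$, while $\pi_0\underline{\U}(K,S^{n-1}) \cong \check{H}^{n-1}(K) \neq 0$), and Example \ref{pseudo} shows how badly the two structures diverge under gluing. Indeed, the entire difficulty of Section \ref{excision} --- the transfinite $Y^\alpha$ construction behind Theorem \ref{excisionwe} --- exists because such a ``gluing lemma across the two structures'' requires real work even after abelianising; no non-abelian version is available for your induction to quote. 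A secondary, more repairable gap is the final passage to the sequential colimit: a colimit of regular weak equivalences along closed inclusions being a regular weak equivalence also needs an argument (compare Lemmas \ref{finitelim} and \ref{colimexact}).

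The paper's proof is engineered exactly to avoid this trap, and the fix for your argument is to make your cover much finer rather than to glue stage by stage. Working with the fat-cell cofibrant replacement $Q(X)$, the paper builds a \emph{single} open cover $C$ of the whole realisation by iterating the collar construction through all skeleta at once, so that every \emph{finite intersection} in $C'$ deformation retracts onto a fat cell, i.e.\ onto a compact totally path-disconnected space. Consequently $\{\underline{\Sing}(U)\}_{U \in C'}$ is levelwise \emph{homotopy equivalent} --- hence compact Hausdorff weakly equivalent, which hocolim does respect --- to a diagram of constant simplicial spaces $\{\disc(K)\}$, which is identified with the diagram $D'$ whose compact Hausdorff homotopy colimit is $Q(X) \simeq X$. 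Theorem \ref{singhoco} is then invoked exactly once, at the top, and it is the \emph{only} regular weak equivalence in the whole chain; all other comparisons are compact Hausdorff equivalences, which are automatically regular ones. Your two-element collar cover is too coarse precisely because one of its pieces retracts onto $\lvert\mathrm{sk}_{n-1}X\rvert$, which is not totally path-disconnected, forcing the illegitimate substitution; refining through all dimensions so that every intersection is ``small'' removes the need for any gluing lemma.
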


Now given a group object $G$ in $\U$, in \cite{Myself} a simplicial space $\bar{W}G$ is constructed with $\pi_1(\bar{W}G) = G$ and all other homotopy groups trivial. When $G$ is totally path-disconnected so is each $\bar{W}G_n$. So we can apply Theorem \ref{introtpd}.

\begin{introthm}[Theorem \ref{EMspace}]
If $G$ is totally path-disconnected, $L\lvert \bar{W}G \rvert$ is an Eilenberg--Mac Lane space $K(G,1)$ for $G$.
\end{introthm}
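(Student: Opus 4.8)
The plan is to show that $L\lvert \bar{W}G \rvert$ has the correct homotopy type for a $K(G,1)$, namely $\pi_1 \cong G$ as $k$-groups and $\pi_n = 0$ for $n \neq 1$. The key structural input is Theorem~\ref{introtpd}: since $G$ is totally path-disconnected, each $\bar{W}G_n$ is totally path-disconnected, so that theorem applies to $X = \bar{W}G$ and gives a weak equivalence $\underline{\Sing} \circ L\lvert \bar{W}G \rvert \simeq \bar{W}G$ in the regular structure. By the remark in the excerpt that weak equivalences in the regular structure induce isomorphisms of the topological homotopy groups, this equivalence transports the homotopy groups of $\bar{W}G$ onto those of $\underline{\Sing} \circ L\lvert \bar{W}G \rvert$. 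The final ingredient is the relationship between the homotopy groups of a space $Y \in \U$ and those of $\underline{\Sing}(Y)$: I would invoke (or verify from the definitions of $\underline{\Sing}$ and of the topological homotopy groups) that $\pi_n^{\mathrm{top}}\bigl(\underline{\Sing}(Y)\bigr) \cong \pi_n^{\mathrm{top}}(Y)$ as $k$-groups, the continuous analogue of the classical fact that $\Sing$ preserves homotopy groups.

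Assembling these, I would argue as follows. First recall from \cite{Myself} that $\bar{W}G$ was constructed precisely so that $\pi_1(\bar{W}G) = G$ and $\pi_n(\bar{W}G) = 0$ for $n \neq 1$, these being isomorphisms of $k$-groups. Next, apply Theorem~\ref{introtpd} to obtain the regular weak equivalence $\underline{\Sing} \circ L\lvert \bar{W}G \rvert \simeq \bar{W}G$, and read off that the two sides have isomorphic topological homotopy groups in every degree. Finally, combine this with the identification $\pi_n^{\mathrm{top}}\bigl(\underline{\Sing}(L\lvert \bar{W}G \rvert)\bigr) \cong \pi_n^{\mathrm{top}}\bigl(L\lvert \bar{W}G \rvert\bigr)$ to conclude that $\pi_1^{\mathrm{top}}\bigl(L\lvert \bar{W}G \rvert\bigr) \cong G$ and $\pi_n^{\mathrm{top}}\bigl(L\lvert \bar{W}G \rvert\bigr) = 0$ for $n \neq 1$, as $k$-groups. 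This is exactly the defining property of an Eilenberg--Mac Lane space $K(G,1)$ in this topological setting, so $L\lvert \bar{W}G \rvert$ is such a space.

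The main obstacle I anticipate is the passage between the simplicial and topological worlds encoded in the claim $\pi_n^{\mathrm{top}}\bigl(\underline{\Sing}(Y)\bigr) \cong \pi_n^{\mathrm{top}}(Y)$. In the classical discrete setting this is standard, but here one must be careful that the isomorphism is an isomorphism of $k$-groups and that it is compatible with the topological (compact-open-derived, compactly-generated) structure on the homotopy groups, rather than merely an abstract group isomorphism. I would check that the unit map $Y \to \lvert \underline{\Sing}(Y) \rvert$ behaves correctly with respect to the regular structure and that the definition of $\pi_n^{\mathrm{top}}$ via mapping spaces out of spheres is preserved; the subtlety is that $L\lvert - \rvert$ is a left \emph{derived} functor, so one is comparing $Y = L\lvert \bar{W}G \rvert$ against a cofibrant replacement rather than the naive realisation, and I would need the regular weak equivalence from Theorem~\ref{introtpd} to be genuinely detecting the derived realisation's homotopy. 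Once the topological naturality of these identifications is pinned down, the remainder of the argument is a direct chain of isomorphisms with no further hard content.
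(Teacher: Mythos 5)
Your proposal is correct and follows essentially the same route as the paper: the paper's (very brief) argument is exactly to note that $\bar{W}G_n = G \times \cdots \times G$ is totally path-disconnected when $G$ is, apply Theorem \ref{tpd} to get the regular weak equivalence $\underline{\Sing} \circ L\lvert \bar{W}G \rvert \simeq \bar{W}G$, and combine this with $\pi_1(\bar{W}G) = G$ and the vanishing of the other homotopy groups from \cite{Myself}. The compatibility $\pi_n(\underline{\Sing}(Y)) \cong \pi_n(Y)$ that you flag as the main obstacle is indeed the one step the paper leaves implicit, relying on \cite{Myself}, so your caution there matches rather than diverges from the paper's reasoning.
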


Finally we justify our use of the totally path-disconnected condition, rather than the stronger one of being totally disconnected, by constructing in Example \ref{freepseudo} a totally path-disconnected group object in $\U$ which is not totally disconnected.

\section{Topological groups and modules}
\label{kmodules}

We work in $\U$, the category of compactly generated, weakly Hausdorff spaces, together with a modification of the compact-open topology on spaces of maps. This makes $\U$ into a cartesian closed category; see \cite{Strickland} for a good general reference on this. We will write $\underline{\U}(X,Y)$ for the space of maps $X \to Y$ in $\U$ equipped with this topology, and more generally for a category $\C$ enriched over a category $\D$ we will write $\C_\D(X,Y)$ for the enriched hom-object, or $\underline{\C}(X,Y)$ when there is no ambiguity.

Note that the definitions in this section also make sense for spaces in $\K$, the category of compactly generated spaces, which is also cartesian closed; we restrict to $\U$ for compatibility with later sections, where a model structure is only constructed on $\U$.

We can define internal group objects in $\U$: these are groups $G$ with a topology on their underlying set making multiplication $G \times G \to G$ and inversion $G \to G$ continuous. Note that the product $G \times G$ here is the internal product in $\U$. For ease of use we will refer to such group objects as topological groups, although they are not topological groups in general: in particular, writing $\times_0$ for the product in the category of topological spaces, the map $G \times_0 G \to G$ may not be continuous. Indeed, \cite[Example 2.14]{LaMartin} shows that group objects $G$ in $\U$ for which $G \times_0 G \to G$ is not continuous do arise in nature.

As for these topological groups, we can define a category of ring objects in $\U$, which we will call topological rings. Similarly, for a topological ring $R$, we write $R$-$\U Mod$ for the category of left $R$-module objects in $\U$. All this includes as a special case $\U Ab$, the category of abelian group objects in $\U$, which is $\mathbb{Z}$-$\U Mod$ where $\mathbb{Z}$ is given the discrete topology -- indeed, results that apply analogously to $R$-$\U Mod$ will mostly be stated in this paper as results about $\U Ab$, with the generalisation being left to the reader. For details on this module category, see \cite[Sections 7-8]{Myself}. In the rest of this section we summarise the results we will need, without further reference.

Given $A,B \in R$-$\U Mod$, write $\U_R(A,B)$ for the set of morphisms $A \to B$: this naturally has the structure of an abelian group, and the restriction of the topology on $\underline{\U}(A,B)$ makes $\underline{\U}_R(A,B)$ into an abelian topological group, so that $R$-$\U Mod$ becomes an additive category enriched over $\U Ab$.

One of the generalisations of abelian categories is the concept of quasi-abelian categories. A quasi-abelian category is an additive category with all kernels and cokernels, satisfying two additional properties:
\begin{enumerate}[(i)]
\item in any pull-back square
\[
\xymatrix{A' \ar[r]^{f'} \ar[d] & B' \ar[d] \\
A \ar[r]^{f} & B,}
\]
if $f$ is the cokernel of some map then so is $f'$;
\item in any push-out square
\[
\xymatrix{A \ar[r]^{f} \ar[d] & B \ar[d] \\
A' \ar[r]^{f'} & B',}
\]
if $f$ is the kernel of some map then so is $f'$.
\end{enumerate}
It turns out that $R$-$\U Mod$ is a complete and cocomplete quasi-abelian category. $R$-$\U Mod$ also has free modules. That is, the forgetful functor $R$-$\U Mod \to \U$ has a left adjoint, which we will write as $F$.

See \cite[Section 1]{Schneiders} for a complete account of homological algebra over quasi-abelian categories. For $\mathcal{C}$ a quasi-abelian category there is an abelian category $\mathcal{LH(C)}$ called the left heart of $\mathcal{C}$ whose objects consist of monomorphisms in $\mathcal{C}$. There is a fully faithful, exact embedding $\mathcal{C} \to \mathcal{LH(C)}$ which induces an equivalence on the derived categories of chain complexes in the two categories. Thus for a chain complex in $\mathcal{C}$ we can think of it as a chain complex in $\mathcal{LH(C)}$ and take homology there: the resulting $n$th homology functor sends $$\cdots \to C_{n+1} \xrightarrow{f_n} C_n \xrightarrow{f_{n-1}} C_{n-1} \to \cdots$$ to the map $\coim(f_n) \to \ker(f_{n-1})$ in the left heart. Since the embedding $\mathcal{C} \to \mathcal{LH(C)}$ is exact, the family $LH_\ast$ of functors takes a short exact sequence of chain complexes in $\mathcal{C}$ and gives a long exact sequence in the left heart.

Note that all this can be dualised to give a right heart and a right homology functor, which we will not use in this paper.

The class of all kernel-cokernel pairs in $R$-$\U Mod$ thus makes it into an exact category, in the sense of Quillen. We will refer to this as the quasi-abelian structure, or the regular structure, by analogy to the non-additive case below.

However, with this exact structure $R$-$\U Mod$ does not have enough projectives, which is a serious drawback in doing homological algebra. Since $R$-$\U Mod$ has free modules (that is, the forgetful functor to $\U$ has a left adjoint), we have various other structures available, which make $R$-$\U Mod$ into a left exact structure (a generalisation of an exact structure which requires a well-behaved class of deflations but not inflations) with enough projectives. We are interested here the case where the projectives are summands of free modules on disjoint unions of compact Hausdorff spaces and the deflations are the epimorphisms $A \to B$ such that the induced map $\U_R(P,A) \to \U_R(P,B)$ is surjective for all projectives $P$. We will refer to this as the compact Hausdorff exact structure.

\section{Topological homotopy groups}
\label{tophomotopy}

Given a space $X$, a classical homotopy group, which we write as $\pi^{abs}_n(X,x)$, is calculated as a quotient of the set of pointed maps $S^n \to X$. Since this set of maps has a natural topology, the compact-open topology, the obvious approach to putting a topology on $\pi^{abs}_n(X,x)$ is to give it the quotient topology. Indeed there is a literature using this definition; see for example \cite{Fabel}.

This definition has a shortcoming: this topology does not always make $\pi^{abs}_n(X,x)$ into a topological group, as shown in \cite{Fabel}. Essentially the problem is caused by the failure of the category of topological spaces, together with the compact-open topology, to be cartesian closed. Our solution is to restrict to a convenient category of spaces which is.

Using the same definitions internally to $\K$, the category of compactly generated space, $\pi^{abs}_n(X,x)$ is made into a group object in $\K$, as shown in \cite{Myself}. We write $\pi^\K_n(X,x)$ for this. But this definition has a shortcoming too. Given a fibration in $\U$ (in the model structure defined below), we would like to emulate the classical situation by obtaining a long exact sequence of topological homotopy groups; but such a sequence does not hold here. The problem here is that $\U$ and $\K$ are regular, but not Barr-exact, categories. See \cite{Myself} for more on this. But regular categories $C$ have a well-behaved, canonical embedding into a Barr-exact category: the objects of this category are equivalence relations in $C$. We will identify spaces in $\U$ with their image in the exact completion $\U_{ex}$ under this embedding.

Explicitly, given $X \in \U$, the space $\underline{\U}(S^n,X)$ of based maps $S^n \to X$ (for some choice of basepoint $x$) has a natural topology in $\U$, as before; so does the space $H$ of based homotopies between such maps. The inclusions $$(\id,0), (\id,1): S^n \to S^n \times [0,1]$$ induce a pair of maps $H \rightrightarrows \underline{\U}(S^n,X)$, or equivalently a map $$H \to \underline{\U}(S^n,X) \times \underline{\U}(S^n,X).$$ Factoring this map as a quotient followed by an injection gives a injective map $$H' \to \underline{\U}(S^n,X) \times \underline{\U}(S^n,X),$$ and this is the equivalence relation we take to be the $n$th homotopy group $\pi_n(X,x)$: it is a group object in $\U_{ex}$.

With this definition, a fibration in $\U$, in the regular structure defined below, does give a long exact of these homotopy groups: \cite[Section 5]{Myself}.

\section{Model structures}
\label{model}

In this paper we want to construct spaces with some chosen topological homotopy group $G$. As for abstract groups, it quickly becomes necessary to develop some tools to allow the calculation of the homotopy groups of any interesting spaces. In our context the most important tools are the model structures defined in \cite{Myself}.

We recall here the pieces of machinery we will need. We use the definitions of model structures and model categories given in \cite[Section 1.1]{Hovey}. That is, a model category is a complete and cocomplete category together with a model structure, and factorisations are required to be functorial.

Given two model categories $\C$ and $\D$ and an adjoint pair of functors $F: \C \rightleftarrows \D: G$ with $F \dashv G$, suppose $F$ preserves cofibrations and trivial cofibrations, or equivalently $G$ preserves fibrations and trivial fibrations. We say that such a pair of functors forms a Quillen adjunction. Then we can define the left derived functor $LF$ of $F$ and dually the right derived functor $RG$ of $G$, such that these derived functors preserve weak equivalences and hence induce functors on the associated homotopy categories. Explicitly, $LF$ can be constructed as the composite of $F$ with the cofibrant replacement functor on $\C$, and dually for $RG$.

An important example of this idea is that of homotopy colimits (or dually homotopy limits): given a small category $\E$, the colimit functor $\colim: \C^\E \to \C$ is left adjoint to the diagonal functor $\Delta: \C \to \C^\E$. We would like to put a model structure on $C^\E$ that makes these functors into a Quillen adjunction.

\begin{thm}
If $\C$ is class-cofibrantly generated in the sense of \cite{Myself}, the projective model structure defined in \cite[Section A.2.8]{LurieHTT} exists on $\C^\E$, is class-cofibrantly generated, and makes $\colim \dashv \Delta$ a Quillen adjunction.
\end{thm}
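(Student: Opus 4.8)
The plan is to obtain the projective structure by transferring the class-cofibrantly generated structure on $\C$ along the family of evaluation functors. For each object $e \in \ob\E$, write $\mathrm{ev}_e \colon \C^\E \to \C$ for evaluation at $e$; since $\E$ is small and $\C$ is cocomplete, $\mathrm{ev}_e$ has a left adjoint $F_e$, the left Kan extension along $\{e\} \hookrightarrow \E$, given explicitly by $F_e(A)(e') = \coprod_{\E(e,e')} A$. If $I$ and $J$ denote the generating classes of cofibrations and trivial cofibrations of $\C$, I would take as candidate generating classes for $\C^\E$ the classes $I^\E = \{F_e(i) : e \in \ob\E,\ i \in I\}$ and $J^\E = \{F_e(j) : e \in \ob\E,\ j \in J\}$, declare the weak equivalences $W^\E$ and the fibrations of $\C^\E$ to be the objectwise ones, and then verify the hypotheses of the recognition theorem for class-cofibrantly generated structures of \cite{Myself}.

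Several of these hypotheses are inherited directly from $\C$. The category $\C^\E$ is complete and cocomplete because limits and colimits of diagrams are computed objectwise, and the objectwise weak equivalences satisfy the two-out-of-three property and are closed under retracts because these conditions hold pointwise in $\C$. The identification of the two injective classes is a formal consequence of the adjunctions $F_e \dashv \mathrm{ev}_e$: a map $p$ in $\C^\E$ lifts against every member of $I^\E$ (respectively $J^\E$) if and only if each component $p_e = \mathrm{ev}_e(p)$ lifts against every member of $I$ (respectively $J$), so that the $I^\E$-injectives are exactly the objectwise trivial fibrations and the $J^\E$-injectives are exactly the objectwise fibrations. This pins the fibrations and trivial fibrations down to the intended classes.

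The one computation that genuinely uses the structure is the acyclicity of the generating trivial cofibrations. Because $F_e$ preserves colimits and colimits in $\C^\E$ are objectwise, every relative $J^\E$-cell complex is, upon evaluation at each $e'$, a transfinite composite of pushouts of coproducts of maps in $J$; since $F_e(j)$ evaluated at $e'$ is the coproduct $\coprod_{\E(e,e')} j$ and the class of trivial cofibrations of $\C$ is closed under coproducts, pushouts and transfinite composition, each such evaluation is a trivial cofibration of $\C$. Hence $J^\E$-cofibrations are objectwise trivial cofibrations, in particular objectwise weak equivalences, giving $J^\E\text{-cof} \subseteq W^\E \cap I^\E\text{-cof}$; the reverse compatibility $I^\E\text{-inj} \subseteq W^\E \cap J^\E\text{-inj}$ is immediate from the injective identifications above. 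Together with the recognition theorem this produces the class-cofibrantly generated projective model structure, with functorial factorisations supplied by the (class) small object argument.

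Finally the Quillen adjunction is the easy part: the diagonal $\Delta \colon \C \to \C^\E$ sends $X$ to the constant diagram, so each component of $\Delta(f)$ is just $f$, and therefore $\Delta$ carries fibrations and trivial fibrations of $\C$ to objectwise fibrations and trivial fibrations, i.e.\ to fibrations and trivial fibrations of the projective structure; as $\colim \dashv \Delta$, this is exactly what is required. The main obstacle is set-theoretic rather than homotopical: in the class-cofibrantly generated world the small object argument is not automatic, so I must confirm that the boundedness and smallness hypotheses of the recognition theorem in \cite{Myself} survive the passage to $\C^\E$. The key point is that $F_e$, being a left adjoint, preserves the relevant smallness (mapping out of $F_e(A)$ reduces to mapping out of $A$ after evaluation, and filtered colimits in $\C^\E$ are objectwise), and that $I^\E$ and $J^\E$ are indexed by $\ob\E$ -- a set -- times the original generating classes, so they retain exactly the class-theoretic character to which the recognition theorem applies.
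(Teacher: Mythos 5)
Your proposal is correct and takes essentially the same approach as the paper: the paper's proof simply cites Lurie's proof of \cite[Proposition A.2.8.2]{LurieHTT} --- which is precisely this transfer along the evaluation/free-diagram adjunctions $F_e \dashv \mathrm{ev}_e$ with generating classes $\{F_e(i)\}$ and $\{F_e(j)\}$ --- and observes that it goes through verbatim in the class-cofibrantly generated setting, with the Quillen adjunction statement following as in \cite[Proposition A.2.8.7(1)]{LurieHTT}. Your spelled-out verification (objectwise fibrations and weak equivalences, acyclicity via closure of trivial cofibrations under coproducts, pushouts and transfinite composition, and smallness carried across the left adjoints) is exactly the content of the proof the paper defers to.
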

\begin{proof}
The proof of \cite[Proposition A.2.8.2]{LurieHTT} proves the existence of the projective model structure on $\C^\E$ when $\C$ is cofibrantly generated: the hypothesis of combinatoriality is not used in this part of the proof. But in fact a careful reading shows that exactly the same proof works in our situation, and shows that $\C^\E$ is class-cofibrantly generated. Then the final statement follows exactly as in \cite[Proposition A.2.8.7(1)]{LurieHTT}.
\end{proof}

We can now define homotopy colimits (of shape $\E$) as the left derived functor $\hocolim: \C^\E \to \C$ of $\colim$.

\begin{rem}
The question of the existence of homotopy limits is a more delicate one here; the usual requirement for the existence of an injective model structure is that $\C$ be combinatorial. It may be possible to pursue a definition of class-combinatorial, analogously to the definition of class-cofibrantly generated, and construct homotopy limits in that way. But we will not need this here.
\end{rem}

Let us now define the model structures we will need in this paper. First on $\U$: a weak equivalence (respectively, fibration) is a map $X \to Y$ such that the induced map $\underline{\U}(K,X) \to \underline{\U}(K,Y)$ is a weak homotopy equivalence (respectively, Serre fibration) for all compact, Hausdorff spaces $K$. A cofibration is a retract of a composition of pushouts by maps of the form $K \times D^n \times \{0\} \to K \times D^n \times [0,1]$, where $D^n$ is the $n$-ball.

Next we consider the category of simplicial spaces $s\U = \U^{\Delta^{op}}$. Here $\Delta$ is the simplex category -- see \cite[Chapter 3]{Hovey} for details on $\Delta$ and simplicial sets. This category is also cartesian closed by \cite[Proposition 3.2]{Myself}, so in particular it is enriched, via a forgetful functor, over simplicial sets $sSet$ and over $\U$. We get for free a projective model structure induced from the compact Hausdorff model structure on $\U$, but here we consider a different one. See \cite{Myself} for more details.

Let $\Lambda^n_k$, $\partial \Delta^n$ and $\Delta^n$ be the standard simplicial $(n,k)$-horn, $n$-sphere and $n$-simplex, respectively. Let $I$ be the class of maps in $s\U$ of the form $$\iota \times \id_K: \partial \Delta^n \times K \to \Delta^n \times K,$$ $K$ compact Hausdorff, and let $J$ be the class of maps of the form $$\iota' \times \id_K: \Lambda^n_k \times K \to \Delta^n \times K,$$ $K$ compact Hausdorff, where $\iota: \Lambda^n_k \to \Delta^n, \iota': \partial \Delta^n \to \Delta^n$ are the inclusion maps. Then a map $f$ in $s\U$ is a cofibration if it is a retract of a composition of pushouts by maps in $I$, a fibration if it has the right lifting property with respect to $J$, and a weak equivalence if $\underline{s\U}_{sSet}(\disc K,f)$ is a weak equivalence in $sSet$ for all compact Hausdorff $K$. Here $\disc$ is the constant simplicial space functor.

Finally, we also consider the categories of simplicial objects in $R$-$\U Mod$ and chain complexes in non-negative degrees in $R$-$\U Mod$. We write $s(R$-$\U Mod)$ for the former category and $c(R$-$\U Mod)$ for the latter. By \cite[Theorem 1.2.3.7]{LurieHA}, these two categories are equivalent, and correspondingly we get equivalent model structures on them. We give an explicit description of the structure on $c(R$-$\U Mod)$: a map is a weak equivalence if its mapping cone is exact in the compact Hausdorff exact structure, a cofibration if it is a levelwise split monomorphism, and a fibration if it is a levelwise deflation in the compact Hausdorff exact structure.

\begin{thm}
These data define model structures on $\U$, $s\U$ and $c(R$-$\U Mod)$, which we call the compact Hausdorff model structures. All three are class-cofibrantly generated.
\end{thm}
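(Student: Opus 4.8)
The plan is to build each of the three structures by verifying the hypotheses of the class-cofibrantly generated analogue of Kan's recognition theorem from \cite{Myself}: for a complete and cocomplete category equipped with a class $W$ and generating classes $I$ and $J$, it is enough that $W$ satisfy two-out-of-three and be closed under retracts, that $I$ and $J$ admit the transfinite small object argument, that every $J\text{-cell}$ complex lie in $W \cap I\text{-cof}$, and that $I\text{-inj} = W \cap J\text{-inj}$. The generating classes are read off from the definitions: for $\U$ the displayed cylinder inclusions $K \times D^n \times \{0\} \to K \times D^n \times [0,1]$ generate the cofibrations; for $s\U$ the classes $I$ and $J$ of products of boundary- and horn-inclusions with compact Hausdorff $K$ are exactly the stated generators; and for $c(R$-$\U Mod)$ the generators are the sphere and disk chain complexes built on the projective generators of the compact Hausdorff exact structure, namely the free modules on disjoint unions of compact Hausdorff spaces.

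That $W$ is closed under retracts and satisfies two-out-of-three is immediate in every case, since $W$ is defined by asking that an enriched mapping object --- $\underline{\U}(K,-)$ for $\U$, and $\underline{s\U}_{sSet}(\disc K,-)$ for $s\U$ --- be a weak equivalence for all compact Hausdorff $K$, and these properties hold objectwise in the target and are reflected back. The substantive input is the small object argument. The generating classes here are genuinely proper classes, indexed by all compact Hausdorff $K$, so the classical argument does not apply; this is exactly the situation the class-cofibrantly generated framework of \cite{Myself} is built to handle. The point I would quote from there is that each compact Hausdorff $K$ is small relative to the relevant cellular maps: a map out of $K$ into a transfinite composite of the allowed pushouts factors through a stage because the compact image of $K$ lands at a finite level of the filtration. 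Feeding this smallness into the class small object argument produces the required functorial factorizations.

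The heart of the matter, and the step I expect to be hardest, is the acyclicity condition $J\text{-cell} \subseteq W$ together with the identification of $I\text{-inj}$ with the trivial fibrations. For $s\U$ one checks that each enriched horn inclusion $\Lambda^n_k \times K \to \Delta^n \times K$ is sent by $\underline{s\U}_{sSet}(\disc K',-)$ to a trivial cofibration of simplicial sets for every compact Hausdorff $K'$, using cartesian closedness of $s\U$ (\cite[Proposition 3.2]{Myself}) to analyse the enriched hom of the product, and then that pushouts and transfinite composites of $sSet$-trivial-cofibrations remain such; this gives $J\text{-cell} \subseteq W$, and the identification $I\text{-inj} = W \cap J\text{-inj}$ follows from the factorizations just constructed by the retract argument. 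For $\U$ the corresponding computation is easier homotopy-theoretically, since the cylinder inclusions are honest homotopy equivalences and $\underline{\U}(K',-)$ preserves homotopy equivalences; the care needed there is rather in pinning down the generating trivial cofibrations, because the cofibration generators are themselves weak equivalences, and in confirming that $I\text{-inj}$ is precisely the class of maps that are simultaneously fibrations and weak equivalences. For $c(R$-$\U Mod)$ the weak equivalences are the maps with exact mapping cone, and the acyclicity and lifting conditions reduce to standard relative-homological-algebra computations with the sphere and disk complexes over the compact Hausdorff exact structure; alternatively one may transport the whole structure from $s(R$-$\U Mod)$ across the Dold--Kan equivalence \cite[Theorem 1.2.3.7]{LurieHA}, matching split-monomorphism cofibrations and deflation fibrations on the two sides, so this case contributes bookkeeping rather than new homotopy theory. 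The overarching obstacle, present in all three cases, is that both the smallness and the stability of $W$ under cellular constructions must be controlled uniformly over the proper class of compact Hausdorff test objects $K$, which is the reason the ordinary cofibrantly generated machinery has to be replaced by its class-level refinement.
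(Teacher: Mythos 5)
The paper gives no argument at all for this theorem: its proof is the bare citation \cite[Theorem 2.4, Theorem 4.19, Theorem 8.2]{Myself}. So your attempt is really a reconstruction of that reference, and the overall architecture you propose --- a class-level recognition theorem, smallness of compact Hausdorff objects obtained from compactness of images, and Dold--Kan transport for the chain-complex case --- is the right general shape for such a reconstruction.

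However, your treatment of $\U$ contains a step that fails, and it is not the one you flagged. You take the cylinder inclusions $K \times D^n \times \{0\} \to K \times D^n \times [0,1]$ as the generating class $I$ of cofibrations. By cartesian closedness of $\U$, a map $f$ has the right lifting property against this class if and only if $\underline{\U}(K,f)$ is a Serre fibration for every compact Hausdorff $K$; that is, $I\text{-inj}$ is exactly the class of \emph{all} fibrations. Since every object is fibrant, $I\text{-inj}$ strictly contains the trivial fibrations (for instance $S^1 \to \ast$ lies in $I\text{-inj}$ but $\underline{\U}(\ast,S^1)$ is not weakly contractible), so the recognition-theorem condition $I\text{-inj} = W \cap J\text{-inj}$ fails for \emph{every} possible choice of $J$, because $W \cap J\text{-inj} \subseteq W$. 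Equivalently: each map in your $I$ is a deformation-retract inclusion, its pushouts are mapping-cylinder inclusions and hence again deformation retracts, and $\underline{\U}(K,-)$ preserves homotopy equivalences, so every $I$-cofibration would be a weak equivalence and the factorisation axiom plus two-out-of-three would force every map of $\U$ to be a weak equivalence, which is absurd. So no care in ``pinning down the generating trivial cofibrations'' can repair this: the displayed cylinder maps \emph{are} the generating trivial cofibrations $J$, and the generating cofibrations must instead be the boundary inclusions $K \times S^{n-1} \to K \times D^n$, so that (by the same adjunction) $I\text{-inj}$ becomes the class of maps sent by every $\underline{\U}(K,-)$ to trivial Serre fibrations. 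The paper's own recollection of the definition conflates these two classes, but a correct proof has to make this repair. A smaller gap of the same flavour occurs in your $s\U$ argument: $\underline{s\U}_{sSet}(\disc K',-)$ is a right adjoint and does not preserve pushouts, so ``pushouts of maps sent to trivial cofibrations are sent to trivial cofibrations'' is not automatic; one needs that the maps $\Lambda^n_k \times K \to \Delta^n \times K$ are simplicial deformation-retract inclusions, that such inclusions are stable under pushout, that the enriched hom preserves simplicial homotopy equivalences, and a closed-inclusion argument in the style of Lemma \ref{finitelim} for the transfinite composites.
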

\begin{proof}
\cite[Theorem 2.4, Theorem 4.19, Theorem 8.2]{Myself}
\end{proof}

These categories have various Quillen functors between them, as shown in the following diagram: $$\U \leftrightarrows_{\underline{Sing}}^{\lvert - \rvert} s\U \rightleftarrows^{F}_{U} s(R\text{-}\U Mod) \cong c(R\text{-}\U Mod).$$ Here $\Sing$ is the singular simplicial space functor, $\lvert - \rvert$ is geometric realisation, $F$ is the free $R$-module functor and $U$ is the forgetful functor; $\lvert - \rvert$ is left adjoint to $\underline{Sing}$ and $F$ is left adjoint to $U$. See \cite{Myself} for details.

We also need the standard Quillen model structures on $\U$ and $sSet$ as defined in \cite{Hovey}, where the weak equivalences are weak homotopy equivalences.

Finally, we need one more homotopical structure one $s\U$, which we call the regular structure. In \cite{Myself} the regular structure is referred to as $(s\U,reg)$. The regular structure is defined on internal Kan complexes in $s\U$, that is, objects $X$ such that $X_n \to \Lambda^n_k(X)$ is a regular epimorphism for all $n,k$. On these objects $X$ we can define homotopy group objects $\pi_n(X,x)$ in the exact completion of $\U$, for a choice of basepoint $x$. Here we say a map $X \to Y$ is a weak equivalence if it induces isomorphisms of all homotopy groups, a fibration if $X_n \to \Lambda^n_k(X) \times_{\Lambda^n_k(Y)} Y_n$ is a regular epimorphism for all $n,k$ and a trivial fibration if $X_n \to \partial\Delta^n(X) \times_{\partial\Delta^n(Y)} Y_n$ is a regular epimorphism for all $n$.

We would like to extend some of this structure to all of $s\U$. Certainly the definition of fibrations and trivial fibrations makes sense. As for abstract simplicial sets, there is a functor $\Ex^\infty$ (see \cite{Low}).

It is stated in \cite[Theorem 2.8]{Low} that in a regular category, if finite limits commute with colimits of sequences indexed by $\mathbb{N}$, then $\Ex^\infty$ preserves fibrations and trivial fibrations, $\Ex^\infty(X)$ is an internal Kan complex for all $X$ in $s\U$, and if $X$ is an internal Kan complex, the canonical map $X \to \Ex^\infty(X)$ is a weak equivalence.

This does not hold in $\U$. But we do have the following.

\begin{lem}
\label{finitelim}
Suppose $D$ is a finite category and we have a sequence of functors $(F_n: D \to \U)_{n \in \mathbb{N}}$ and natural transformations $F_n \to F_{n+1}$. Suppose that for all $d \in D$ and all $n$ the maps $F_n(d) \to F_{n+1}(d)$ are closed inclusions. Then the induced maps $\lim F_n \to \lim F_{n+1}$ are closed inclusions and $\colim_n \lim_D F_n(D) = \lim_D \colim_n F_n(D)$.
\end{lem}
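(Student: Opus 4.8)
The plan is to reduce both assertions to two standard facts about $\U$: that finite limits of closed inclusions between weakly Hausdorff spaces are again closed inclusions, and that a map from a compact Hausdorff space into a sequential colimit along closed inclusions factors through a finite stage. The finiteness of $D$ is exactly what will let me take maxima over finitely many stages, and the weak Hausdorff condition is what makes equalizers and diagonals closed.

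For the first claim I would write $\lim_D F_n$ as the equalizer of the two canonical maps $\prod_d F_n(d) \rightrightarrows \prod_{\alpha\colon d\to d'} F_n(d')$, both products being finite since $D$ is a finite category. A finite product of closed inclusions is a closed inclusion in $\U$, and the codomain of the equalizer diagram is weakly Hausdorff, so its diagonal is closed and the equalizer is a closed subspace; thus $\lim_D F_n$ and $\lim_D F_{n+1}$ sit as closed subspaces of $\prod_d F_n(d)$ and $\prod_d F_{n+1}(d)$ respectively. Naturality of $F_n \to F_{n+1}$ gives, inside $\prod_d F_{n+1}(d)$, the identity $\lim_D F_n = \lim_D F_{n+1} \cap \prod_d F_n(d)$. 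As an intersection of two closed subspaces this is closed in $\lim_D F_{n+1}$, and the induced map is the restriction of the closed embedding $\prod_d F_n(d)\to\prod_d F_{n+1}(d)$, hence itself a closed inclusion.

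For the second claim I would first construct the canonical comparison map $\phi\colon \colim_n \lim_D F_n \to \lim_D \colim_n F_n$ and check it is a bijection on points. Since the structure maps $\lim_D F_n\to\lim_D F_{n+1}$ are injective (by the first part) and the $F_n(d)\to F_{n+1}(d)$ are closed inclusions, both sides are unions on underlying sets: $\colim_n\lim_D F_n=\bigcup_n\lim_D F_n$ and $\colim_n F_n(d)=\bigcup_n F_n(d)$. Given a compatible family $(y_d)$ representing a point of $\lim_D\colim_n F_n$, each $y_d$ lies in some $F_{m_d}(d)$; setting $m=\max_d m_d$, which uses finiteness of $D$, and using injectivity of $F_m(d')\hookrightarrow\colim_n F_n(d')$ to verify that the compatibility relations already hold in $F_m$, I find $(y_d)\in\lim_D F_m$, giving surjectivity. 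Injectivity is similar, using that $\lim_D F_m\to\prod_d\colim_n F_n(d)$ is injective.

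It remains to upgrade this continuous bijection to a homeomorphism, and this is the step I expect to be the main obstacle. Since every object of $\U$ is compactly generated, it suffices to show that for each compact Hausdorff $K$ every continuous probe $g\colon K\to\lim_D\colim_n F_n$ lifts continuously through $\phi$. Such a $g$ is a compatible family $g_d\colon K\to\colim_n F_n(d)$; by the factorisation of compact maps through a finite stage of a sequential colimit along closed inclusions, each $g_d$ factors through some $F_{m_d}(d)$, and taking $m=\max_d m_d$ (finiteness of $D$ again) yields compatible maps $\tilde g_d\colon K\to F_m(d)$, where compatibility is checked via injectivity of $F_m(d')\hookrightarrow\colim_n F_n(d')$ as before. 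These assemble into $\tilde g\colon K\to\lim_D F_m\to\colim_n\lim_D F_n$ with $\phi\circ\tilde g=g$, so $\phi^{-1}\circ g$ is continuous for every probe $g$; hence $\phi^{-1}$ is continuous and $\phi$ is a homeomorphism. The delicate ingredients throughout are the closed-inclusion and weak Hausdorff facts feeding the finite-limit computations, and above all the compact-to-finite-stage factorisation, which is precisely where the hypothesis that the inclusions are closed is indispensable.
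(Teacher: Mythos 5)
Your proof is correct, but it follows a genuinely different route from the paper's. For the first claim the paper simply observes that closed inclusions are exactly the regular monomorphisms (equalisers) in $\U$ and that limits commute with limits; your explicit presentation of $\lim_D F_n$ as a closed subspace of the finite product $\prod_d F_n(d)$, cut out by the closed diagonal of a weakly Hausdorff target, reaches the same conclusion more concretely, and sidesteps the implicit point that the equaliser presentations of the inclusions $F_n(d) \to F_{n+1}(d)$ must be chosen naturally in $d$ for the paper's one-line argument to apply. For the exchange of $\colim_n$ with $\lim_D$, the paper argues by structural reduction: finite products commute with all colimits because $\U$ is cartesian closed; pullbacks along sequences of closed inclusions are handled by citing \cite[Corollary 10.4]{Lewis}; equalisers are rewritten as pullbacks involving products; and a general finite limit is then an equaliser of maps between finite products. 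You instead construct the canonical comparison map directly, prove it is bijective by taking maxima over the finitely many objects of $D$, and upgrade the bijection to a homeomorphism by probing with compact Hausdorff spaces, the engine being that a compact probe into a sequential colimit along closed inclusions of weakly Hausdorff spaces factors through a finite stage. That compactness fact is essentially the content of the result of Lewis that the paper cites, so your argument amounts to inlining (and mildly generalising) the cited input: it is self-contained and elementary, at the cost of length, whereas the paper's reduction is shorter but leaves to the reader the assembly of arbitrary finite limits from products and equalisers and the naturality bookkeeping. Both proofs use the hypotheses in the same essential places: finiteness of $D$ to take maxima of stages, closedness of the inclusions for the finite-stage factorisation of compact probes, and weak Hausdorffness to make equalisers (hence limits) closed subspaces.
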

\begin{proof}
The induced maps of limits are closed inclusions because closed inclusions are equalisers in $\U$ and limits commute.

When $D$ is a pullback diagram $A \to C \leftarrow B$ and all the maps $F_n(B) \to F_{n+1}(B)$ and $F_n(C) \to F_{n+1}(C)$ are identities, the commuting of limit and colimit is a special case of \cite[Corollary 10.4]{Lewis}. The general case where $D$ is a pullback diagram follows easily from this. Finite products commute with all colimits because $\U$ is cartesian closed.

When $D$ is an equaliser diagram $f,g: A \to B$, the equaliser of $f,g: F_n(A) \to F_n(B)$ is the pullback of $$F_n(A) \xrightarrow{(f,g)} F_n(B) \times F_n(B) \xleftarrow{(\id_{F_n(B)},\id_{F_n(B)})} F_n(B).$$ So
\begin{align*}
\colim_n \lim_D F_n(D) &= \colim_n \lim_D F_n(A) \to F_n(B) \times F_n(B) \leftarrow F_n(B) \\
&= \lim_D \colim_n F_n(A) \to F_n(B) \times F_n(B) \leftarrow F_n(B) \\
&= \lim_D \colim_n F_n(D).
\end{align*}

All finite limits can be constructed as equalisers of maps between finite products, so a similar argument shows that the lemma holds for these too.
\end{proof}

All the colimits indexed by $\mathbb{N}$ in the argument for \cite[Theorem 2.8]{Low} are colimits of closed inclusions, because $X_n \to \Ex(X)_n$ is a split monomorphism, and hence a regular monomorphism. So its conclusions hold here too, and we get:

\begin{prop}
$\Ex^\infty$ preserves fibrations and trivial fibrations, $\Ex^\infty(X)$ is an internal Kan complex for all $X$ in $s\U$, and if $X$ is an internal Kan complex, the canonical map $X \to \Ex^\infty(X)$ is a weak equivalence.
\end{prop}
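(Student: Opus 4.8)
The plan is to transport the proof of \cite[Theorem 2.8]{Low} into $\U$ essentially verbatim, using Lemma \ref{finitelim} to supply the one categorical ingredient that $\U$ lacks in general. Recall that $\Ex^\infty(X)$ is the sequential colimit $\colim_n \Ex^n(X)$ of the tower obtained by iterating the right adjoint $\Ex$ to subdivision $\sd$, and that the argument of \cite[Theorem 2.8]{Low} derives all three conclusions from the single hypothesis that finite limits commute with $\mathbb{N}$-indexed colimits. Since this hypothesis fails in $\U$, the first step is to record that the tower in question consists of closed inclusions: the natural map $X_n \to \Ex(X)_n$ is a split monomorphism, hence a regular monomorphism, and regular monomorphisms in $\U$ are closed inclusions. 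Applying $\Ex$ repeatedly, every transition map $\Ex^n(X)_m \to \Ex^{n+1}(X)_m$ is then a closed inclusion, so the colimit defining $\Ex^\infty(X)$ is a colimit of closed inclusions and thus falls within the scope of Lemma \ref{finitelim}.

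With this in hand, I would check each conclusion by reducing it to a commutation of a finite limit with the colimit $\colim_n \Ex^n$, and then invoke Lemma \ref{finitelim}. For the Kan property and the preservation of (trivial) fibrations, the relevant conditions are phrased in terms of the horn objects $\Lambda^n_k(-)$ and boundary objects $\partial \Delta^n(-)$, together with the pullbacks $\Lambda^n_k(X) \times_{\Lambda^n_k(Y)} Y_n$ and $\partial\Delta^n(X) \times_{\partial\Delta^n(Y)} Y_n$ appearing in the definitions of fibration and trivial fibration; each of these is a finite limit. By Lemma \ref{finitelim} the natural maps $\colim_n \Lambda^n_k(\Ex^n X) \to \Lambda^n_k(\Ex^\infty X)$, and their boundary- and pullback-analogues, are isomorphisms. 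A compatible family of horn fillers produced at the finite stages therefore assembles into the required filler for $\Ex^\infty(X)$, and likewise any lifting datum constructed at each finite stage of Low's argument passes to the colimit; this yields the first two conclusions exactly as in \cite{Low}.

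For the final statement, that $X \to \Ex^\infty(X)$ is a weak equivalence when $X$ is an internal Kan complex, the homotopy group objects $\pi_n(-,x)$ are again built from finite limits in the exact completion of $\U$, so Lemma \ref{finitelim} identifies $\pi_n(\Ex^\infty X, x)$ with $\colim_n \pi_n(\Ex^n X, x)$. Low's comparison of these groups along the tower then shows that the canonical map induces isomorphisms on all $\pi_n$, which is precisely a weak equivalence in the regular structure.

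I expect the main obstacle to be bookkeeping rather than any new idea: one must verify that every limit invoked in \cite[Theorem 2.8]{Low} is genuinely finite, so that Lemma \ref{finitelim} applies, and that each sequential colimit occurring there is indeed a colimit of closed inclusions, so that the commutation is legitimate at every step. Once this is confirmed uniformly across the argument, no further modification of Low's proof is needed.
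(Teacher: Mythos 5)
Your proposal is correct and is essentially the paper's own argument: the paper likewise observes that $X_n \to \Ex(X)_n$ is a split (hence regular) monomorphism, so all the $\mathbb{N}$-indexed colimits in Low's proof of \cite[Theorem 2.8]{Low} are colimits of closed inclusions, and then Lemma \ref{finitelim} substitutes for Low's hypothesis that finite limits commute with sequential colimits. Your write-up merely spells out the bookkeeping (horns, boundaries, pullbacks, homotopy group objects as finite limits) that the paper leaves implicit.
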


Then we can define regular weak equivalences in $s\U$ to be maps $X \to Y$ such that $\Ex^\infty(X) \to \Ex^\infty(Y)$ is a regular weak equivalence. It follows easily that weak equivalences in $s\U$ in the compact Hausdorff structure are weak equivalences in the regular structure: see \cite[Lemma 5.16]{Myself}.

Just as we can define homotopy group objects for $X \in \U$, we can now define homology group objects too. To ensure that these are invariant under weak equivalence in the compact-Hausdorff structure, we define $H_n(X)$ to be $LH_n \circ LF \circ \underline{\Sing}(X)$ -- where $LF$ is the left derived functor of the free group functor $F$, calculated in the compact Hausdorff structure. This works because weak equivalences in $c(\U Ab)$ in the compact Hausdorff structure are weak equivalences in the regular structure.

\section{\texorpdfstring{$\underline{\Sing}$ and $\lvert - \rvert$}{Sing and geometric realisation}}
\label{Sing}

Write $\Sing$ for the usual singular simplicial set functor. As for $\underline{\Sing}$, it has a geometric realisation functor $\lvert - \rvert: sSet \to \U$ as a left adjoint. It is a standard result of homotopical algebra that the two compositions $\Sing \lvert - \rvert$ and $\lvert \Sing(-) \rvert$ are weakly equivalent to the identity. In fact the same is true of $L\lvert \underline{\Sing}(-) \rvert$, by \cite[Proposition 4.29]{Myself}. But it is easy to find counter-examples showing the same is not true of the other composition. Much of the rest of the paper will go into showing a partial result in this direction, Theorem \ref{tpd}. In fact the construction of Eilenberg--Mac Lane spaces for totally path-disconnected groups follows quite easily from this result.

We can now state the main technical result of the paper. This can be thought of as a continuous version of the Seifert--van Kampen Theorem.

\begin{thm}
\label{singhoco}
If $C$ is an open cover of $X \in \U$, write $C'$ for the poset of finite intersections of sets in $C$, ordered by inclusion. Then $\underline{\Sing}(X)$ is weakly equivalent (in the regular structure on $s\U$) to the homotopy colimit (in the compact Hausdorff structure on $s\U$) of $\{\underline{\Sing}(U)\}_{U \in C'}$.
\end{thm}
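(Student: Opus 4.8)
The plan is to construct the natural comparison map and to prove it is a regular weak equivalence by factoring it through the simplicial subspace of singular simplices that are small with respect to $C$. Since each $U \in C'$ includes into $X$, the inclusions $U \hookrightarrow X$ assemble into a cocone and so induce a natural map out of the colimit, and hence a natural map $\phi \colon \hocolim_{U \in C'} \underline{\Sing}(U) \to \underline{\Sing}(X)$. Because weak equivalences in the compact Hausdorff structure are weak equivalences in the regular structure, and because a map is a regular weak equivalence precisely when, after applying $\Ex^\infty$, it induces isomorphisms on all the homotopy group objects $\pi_n$, I would reduce the problem to analysing $\phi$ in two stages: first comparing the homotopy colimit with the ordinary colimit, and then comparing that colimit with $\underline{\Sing}(X)$.

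For the first stage I would model the homotopy colimit by the bar construction: in the projective (compact Hausdorff) model structure on $(s\U)^{C'}$ the homotopy colimit of $\{\underline{\Sing}(U)\}$ is the diagonal of the bisimplicial space whose level $p$ is $\coprod_{U_0 \subseteq \cdots \subseteq U_p} \underline{\Sing}(U_0)$, the coproduct running over chains in $C'$. For the ordinary colimit, each $\underline{\U}(\Delta^n, U) \hookrightarrow \underline{\U}(\Delta^n, X)$ is an open inclusion since $\Delta^n$ is compact, so $\colim_{U \in C'} \underline{\Sing}(U)$ is the simplicial subspace $\underline{\Sing}^C(X) \subseteq \underline{\Sing}(X)$ of simplices whose image lies in some element of $C$. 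The task is then to show the canonical map $\hocolim_{C'} \underline{\Sing}(U) \to \underline{\Sing}^C(X)$ is a weak equivalence. The key point is that, for a fixed simplex, the subposet of $C'$ through which it factors is closed under intersection and hence downward directed, so its nerve is contractible; carrying this contractibility through the bar construction in families shows the comparison induces isomorphisms on homotopy groups, just as in the classical \v{C}ech argument.

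The second stage is the continuous analogue of the small simplices theorem: the inclusion $\underline{\Sing}^C(X) \hookrightarrow \underline{\Sing}(X)$ should be a regular weak equivalence. Here I would use iterated barycentric subdivision, modelled by $\Ex^\infty$: enough subdivisions push every piece of a singular simplex into a single element of $C$, so on the level of homotopy group objects the small simplices see everything that $\underline{\Sing}(X)$ does. Concretely I would apply $\Ex^\infty$ to the pair $(\underline{\Sing}(X), \underline{\Sing}^C(X))$ and use the standard subdivision homotopy to conclude that the inclusion induces isomorphisms on all $\pi_n$, hence is a regular weak equivalence.

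The main obstacle will be making the subdivision argument work continuously, that is, in families over the whole mapping space $\underline{\U}(\Delta^n, X)$ rather than one simplex at a time. Classically the number of subdivisions needed to make a simplex $C$-small is governed by a Lebesgue number and so varies from simplex to simplex, with no uniform bound; one must therefore assemble the local subdivisions and the connecting homotopies into genuine maps and homotopies of spaces in $\U$. This is exactly where the compactly generated topology and the interchange of finite limits with sequential colimits of closed inclusions from Lemma \ref{finitelim} are indispensable: they are what allows $\Ex^\infty$ and the subdivision homotopy to be built as filtered colimits of closed inclusions, and what guarantees that the resulting maps are continuous and that the relevant finite limits are preserved. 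Controlling the topology at this step, and checking that the contractibility of the factorisation posets in the first stage can likewise be realised continuously, is the technical heart of the argument.
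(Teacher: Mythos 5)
Your two-stage factorisation through the subcomplex of $C$-small simplices has the same skeleton as the paper's proof, and your second stage is essentially the paper's Lemma \ref{colimwe}: there the inclusion $\underline{\Sing}'(X) \to \underline{\Sing}(X)$ is shown to be a weak equivalence even in the compact Hausdorff structure, by exactly the mechanism you describe (each $\Ex^m(\underline{\Sing}'(X))_n$ is an open subspace of $\underline{\U}(\lvert \Delta^n \rvert, X)$, so every compact subspace lands in one of them, and smallness with respect to open inclusions does the rest). Contrary to your closing diagnosis, this is the easy half; the subdivision step is where compactness works \emph{for} you.

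The genuine gap is in your first stage. You claim the comparison $\hocolim_{C'}\underline{\Sing}(U) \to \colim_{C'}\underline{\Sing}(U) = \underline{\Sing}'(X)$ induces isomorphisms on homotopy group objects by carrying the classical nerve-contractibility (\v{C}ech) argument ``through the bar construction in families''. This cannot work as stated: an argument of that kind, carried out continuously in families, would exhibit the map as a weak equivalence in the compact Hausdorff structure, and the paper's Example \ref{pseudo} shows that is false. Take $X$ the pseudo-arc and $C$ any open cover not containing $X$ itself: since $X$ is connected and totally path-disconnected, $\underline{\Sing}'(X) = \underline{\Sing}(X) = \disc X$, while level $0$ of the fibrant replacement $Z$ of the homotopy colimit over $\underline{\Sing}(X)$ is a disjoint union of spaces each mapping into a proper open subset of $X$, so a lift of $\id_X$ would disconnect $X$. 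The obstruction is precisely that the assignment sending a simplex to the poset of elements of $C'$ it factors through is not continuous, so the fixed-simplex contractibility cannot be assembled into maps of spaces; it can only be assembled \emph{after passing to finite compact covers of compact families}, and that flexibility is available in the regular structure but not the compact Hausdorff one. Supplying this is where all the work in the paper lies: it factors $\hocolim \to \underline{\Sing}(X)$ as a trivial cofibration followed by a fibration $Z \to \underline{\Sing}(X)$, analyses $Z$ as built by transfinitely filling compact horns, and proves (Proposition \ref{beta}) that for every finite triangulation $\Sigma$ of a ball, every compact family of $\Sigma$-filling problems admits lifts after a finite compact cover -- the passage from $\Delta^n$ to arbitrary finite triangulations being what lets the induction peel off the fat cells of $Z^m \setminus Z^{m-1}$. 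None of this is delivered by Lemma \ref{finitelim}, which you cite as the key tool; that lemma only guarantees that compact subspaces of $\{\Sigma,Z\}$ land in some $\{\Sigma,Z^m\}$, i.e.\ it sets up the induction but does not run it. (A smaller point: the bar construction computes the homotopy colimit only for an objectwise cofibrant diagram, so you would in any case need to replace each $\underline{\Sing}(U)$ by its cofibrant replacement $Q(\underline{\Sing}(U))$, as the paper does in Lemma \ref{hocolim}.)
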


\begin{rem}
This mixture of the two homotopical structures is not beautiful, but it is a necessary evil. We will see later in Example \ref{pseudo} that in general the homotopy colimit of $\{\underline{\Sing}(U)\}_{U \in C'}$ is not weakly equivalent to $\underline{\Sing}(X)$ in the compact Hausdorff structure; on the other hand, without a well-behaved notion of cofibrations in the regular structure, we have no way of defining homotopy colimits there -- but see Section \ref{excision}.
\end{rem}

This may be thought of as a `continuous version' of \cite[Proposition A.3.2]{LurieHA}, and we will start by reproving the intermediate result \cite[Lemma A.3.3]{LurieHA}, using an approach that carries across better to the current situation.

\begin{lem}
(\cite[Lemma A.3.3]{LurieHA}) Let $X \in \U$, and let $C$ be an open cover of $X$. Let $\Sing'(X)$ be the simplicial subset of $\Sing(X)$ spanned by those $n$-simplices $\lvert \Delta^n \rvert \to X$ which factor through some $U \in C$. Then the inclusion $i : \Sing'(X) \to \Sing(X)$ is a weak equivalence of simplicial sets.
\end{lem}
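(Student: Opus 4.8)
The plan is to apply the functor $\Ex^\infty$ and to show that $\Ex^\infty(i)$ is an \emph{isomorphism} of simplicial sets. Since the canonical map $Y \to \Ex^\infty(Y)$ is a weak equivalence for every simplicial set $Y$ (this is classical for $sSet$, and is the analogue of the Proposition proved just above for $s\U$), the two-out-of-three property applied to the naturality square of $Y \mapsto \Ex^\infty(Y)$ along $i$ then forces $i$ itself to be a weak equivalence. I prefer this route to the usual chain-level subdivision argument precisely because $\Ex^\infty$ is available both here and in $s\U$, so the same strategy is what will carry across to the proof of Theorem \ref{singhoco}.

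First I would record that $i$ is a monomorphism, being the inclusion of a simplicial subset. As $\Ex(Y)_m = \Hom_{sSet}(\sd \Delta^m, Y)$ exhibits $\Ex$ as a right adjoint, it preserves monomorphisms; and sequential colimits of monomorphisms in $sSet$ are again monomorphisms (monos being levelwise injections). Hence $\Ex^\infty(i)$ is a monomorphism, and it suffices to prove that $\Ex^\infty(i)$ is surjective on $m$-simplices for every $m$.

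The key computation is the identification of simplices. An $m$-simplex of $\Ex^n \Sing(X)$ is a map $\sd^n \Delta^m \to \Sing(X)$, equivalently a continuous map $\sigma: \lvert \Delta^m \rvert \to X$ remembered together with the $n$-fold barycentric subdivision of its domain; such a simplex lies in the image of $\Ex^n \Sing'(X)$ precisely when every simplex of $\sd^n \Delta^m$ is carried by $\sigma$ into some $U \in C$. Given an arbitrary $m$-simplex of $\Ex^\infty \Sing(X)$, represented by some such $\sigma$, the sets $\sigma^{-1}(U)$ for $U \in C$ form an open cover of the compact metric space $\lvert \Delta^m \rvert$. By the Lebesgue number lemma, after applying $\Ex$ sufficiently many times, i.e.\ subdividing the domain finely enough, every simplex of the resulting subdivision has image inside a single $U$, so that representative factors through $\Sing'(X)$. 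Thus $\Ex^\infty(i)$ is surjective on simplices, and combined with the injectivity above it is an isomorphism.

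The main obstacle is bookkeeping rather than conceptual. One must check that the adjoint identifications of $m$-simplices of $\Ex^n \Sing(X)$ with subdivided singular simplices are strictly compatible with the structure maps $\Ex^n \to \Ex^{n+1}$ of the colimit, so that ``equal in $\Ex^\infty \Sing(X)$'' genuinely reduces to the monomorphism statement used for injectivity, and that the Lebesgue argument can be run uniformly across subdivision indices so that the subdivision terminates. All the geometric content is packaged into the Lebesgue number lemma and the compactness of $\lvert \Delta^m \rvert$; everything else is formal, and it is exactly this formal skeleton that survives when $\Sing$ and $\Ex^\infty$ are replaced by their $\U$-enriched analogues in Theorem \ref{singhoco}.
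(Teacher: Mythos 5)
Your reduction to the claim that $\Ex^\infty(i)$ is an \emph{isomorphism} fails at the surjectivity step, and that claim is false in general. The error is the phrase ``applying $\Ex$ sufficiently many times, i.e.\ subdividing the domain finely enough''. The structure maps of the colimit $\Ex^\infty\Sing(X)=\colim_n \Ex^n\Sing(X)$ are the canonical maps $e$, which under the adjoint identification $\Ex^n\Sing(X)_m \cong \U(\lvert \sd^n\Delta^m\rvert,X)$ act by precomposition with the realised \emph{last-vertex} map, not with the canonical homeomorphism $\lvert\sd^{n+1}\Delta^m\rvert\cong\lvert\sd^n\Delta^m\rvert$. So pushing a representative $\sigma$ forward in the colimit does not keep the same continuous map while remembering a finer subdivision; it replaces $\sigma$ by $\sigma$ composed with a simplicially surjective map. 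Indeed the last-vertex map $\sd\Delta^m\to\Delta^m$ is an epimorphism of simplicial sets (e.g.\ the chain $\{0\}\subset\{0,1\}\subset\cdots\subset\{0,\ldots,m\}$ maps onto the top simplex), and epimorphisms are preserved by $\sd$ and by composition, so for every $N$ some $m$-simplex of $\sd^N\Delta^m$ is carried onto all of $\Delta^m$ by the iterated last-vertex map $\mathrm{lv}^{(N)}$. Hence if $\sigma\colon\lvert\Delta^m\rvert\to X$ has image contained in no single $U\in C$, then $e^N(\sigma)=\sigma\circ\lvert\mathrm{lv}^{(N)}\rvert$ carries that simplex onto $\im(\sigma)$ and so never lies in $\Ex^N\Sing'(X)$; and since the maps $e$ are injective, $e^N(\sigma)$ is the \emph{only} stage-$N$ representative of $[\sigma]$. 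Thus $[\sigma]\notin\im(\Ex^\infty(i))$. Concretely, take $X=S^1$, $C$ two proper open arcs covering $S^1$, and $\sigma$ a surjective loop $\lvert\Delta^1\rvert\to S^1$: then $\Ex^\infty(i)$ misses $[\sigma]$, even though $i$ is of course a weak equivalence there. (Your injectivity argument, by contrast, is fine.)

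What your Lebesgue argument does prove is that the union over $n$ of the subsets $\Ex^n(\Sing'(X))\subseteq\Sing(X)$ --- embedded via the adjunction together with the homeomorphism $\lvert\sd^n\Delta^m\rvert\cong\lvert\Delta^m\rvert$ --- is all of $\Sing(X)$. But the inclusion maps of this increasing chain are \emph{not} the canonical maps $e$ (one precomposes with a homeomorphism, the other with a last-vertex realisation), so the classical theorem that $Y\to\Ex^\infty Y$ is a weak equivalence says nothing about this system, and your two-out-of-three square is unavailable. This is precisely the point you dismissed as ``bookkeeping'': the adjoint identifications are genuinely \emph{not} compatible with the structure maps of $\Ex^\infty$, and that incompatibility is the conceptual heart of the problem. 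It is also exactly the ``subtlety'' the paper's proof flags and repairs: there, one shows each inclusion $\Ex^n(\Sing'(X))\hookrightarrow\Ex^{n+1}(\Sing'(X))$ is simplicially homotopic to the canonical weak equivalence $e$ (because the identity and the last-vertex map on realisations are homotopic), hence is a trivial cofibration, and concludes since the countable composite of these trivial cofibrations is the inclusion $\Sing'(X)\to\Sing(X)$. Some comparison of this kind between the inclusion maps and the maps $e$ cannot be avoided, and without it your proposal does not yield the lemma.
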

\begin{proof}
Via the adjunction between $\Sing$ and $\lvert - \rvert$, we see that $\Ex(\Sing'(X))_n \cong s\U(\sd \Delta^n, \Sing'(X))$ is naturally isomorphic to the subset of $\U(\lvert \sd \Delta^n \rvert, X)$ which maps each simplex of $\sd \Delta^n$ into some $U \in \C$. In this way we think of $\Ex(\Sing'(X))$ as a simplicial subset of $\Sing(X)$. Iterating this reasoning, we can identify the limit $\Ex^\infty(\Sing'(X))$ as the simplicial subset of $\Sing(X)$ consisting of all maps $\lvert \Delta^n \rvert \to X$ for which there is some $m \in \mathbb{N}$ such that each simplex of $\sd^m \Delta^n$ is mapped into some $U \in \C$. By standard arguments, e.g. \cite[Proof of Proposition 2.21]{Hatcher}, this is all of them. That is, $\Ex^\infty(\Sing'(X)) = \Sing(X)$.

There is a subtlety here. There is a canonical map $e: S \to \Ex S$ for a simplicial set $S$, but that is not the inclusion map we are using here. It is well-known that $e$ is a trivial cofibration, and hence $e^\infty: S \to \Ex^\infty S$ is too -- we want to show our inclusion maps are all trivial cofibrations, and the result will follow. Since all our inclusion maps are injective and hence cofibrations of simplicial sets, it suffices to show each inclusion, which we will write (by abuse of notation) as $$i: \Ex^m(\Sing'(X)) \to \Ex^{m+1}(\Sing'(X)),$$ is a weak equivalence. We do this by showing $i$ is homotopic to the weak equivalence $$e: \Ex^m(\Sing'(X)) \to \Ex^{m+1}(\Sing'(X)),$$ after which the result follows by standard model category theory.

This homotopy comes simply from understanding what the maps $i$ and $e$ are doing: $i$ is induced by the identity map $\lvert \sd^m \Delta^n \rvert \to \lvert \Delta^n \rvert$, while $e$ is induced by the map which identifies one of the $n$-simplices of $\lvert \sd^m \Delta^n \rvert$ with $\lvert \Delta^n \rvert$, and retracts all the other simplices onto faces of $\lvert \Delta^n \rvert$. These two maps are clearly homotopic, and a choice of homotopy induces a homotopy between $i$ and $e$.
\end{proof}

We can immediately prove a continuous version:

\begin{lem}
\label{colimwe}
Let $X \in \U$, and let $C$ be an open cover of $X$. Let $\underline{\Sing}'(X)$ be the simplicial subset of $\underline{\Sing}(X)$ spanned by those $n$-simplices $\lvert \Delta^n \rvert \to X$ which factor through some $U \in C$. Then the inclusion $i : \underline{\Sing}'(X) \to \underline{\Sing}(X)$ is a weak equivalence of simplicial spaces in the compact Hausdorff structure.
\end{lem}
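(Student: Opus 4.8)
The plan is to reduce to the discrete version just proved by testing against constant simplicial spaces. By definition of the compact Hausdorff structure on $s\U$, the map $i$ is a weak equivalence exactly when $\underline{s\U}_{sSet}(\disc K, i)$ is a weak equivalence of simplicial sets for every compact Hausdorff $K$, so I fix such a $K$ and write $Y = \underline{\U}(K,X)$ for the mapping space. Using the adjunction $\lvert - \rvert \dashv \underline{\Sing}$ and the coend computation $\lvert \disc K \times \Delta^n \rvert \cong K \times \lvert \Delta^n \rvert$ (valid because $- \times K$ preserves colimits in the cartesian closed category $\U$), I would first identify the target as the ordinary singular complex of $Y$: $$\underline{s\U}_{sSet}(\disc K, \underline{\Sing}(X))_n \cong \U(K \times \lvert \Delta^n \rvert, X) \cong \U(\lvert \Delta^n \rvert, Y) = \Sing(Y)_n.$$

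Next I would pin down the image of $\underline{\Sing}'(X)$. Chasing the top-dimensional ($\id_{[n]}$-indexed) generating simplices through this identification shows that $\underline{s\U}_{sSet}(\disc K, \underline{\Sing}'(X))$ is the simplicial subset $A' \subseteq \Sing(Y)$ of those $g: \lvert \Delta^n \rvert \to Y$ whose adjoint $\tilde g: K \times \lvert \Delta^n \rvert \to X$ satisfies, \emph{for each fixed $k \in K$}, that $\tilde g(\{k\} \times \lvert \Delta^n \rvert)$ lies in some $U \in C$. The subtlety I expect to be the main obstacle is that this is a \emph{pointwise} condition in $k$: $A'$ does not arise as the subcomplex $\Sing'(Y)$ of any open cover of $Y$, since such a subcomplex would force a single $U$ to work uniformly in both $k$ and the simplex coordinate. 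One therefore cannot simply quote the previous lemma for $Y$, and I would instead rerun its proof for the pair $A' \subseteq \Sing(Y)$.

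This needs two ingredients. First, $\Ex^\infty(A') = \Sing(Y)$: for any $g$, the sets $\{\tilde g^{-1}(U)\}_{U \in C}$ form an open cover of the compact space $K \times \lvert \Delta^n \rvert$, and a tube-lemma argument yields a single subdivision level $m$, \emph{uniform in $k$}, for which every simplex of $\sd^m \Delta^n$ is, at each $k$ separately, carried into some $U$; hence $g \in \Ex^m(A')$. This uniformity over $k$ is precisely where compactness of $K$ enters, and is the crux of the argument. Second, exactly as in the proof above, each inclusion $\Ex^m(A') \to \Ex^{m+1}(A')$ is homotopic to the canonical trivial cofibration $e$, the geometric homotopy preserving $A'$ because the pointwise factoring condition is closed under passage to faces; so each such inclusion is a weak equivalence. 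Their colimit is the inclusion $A' \to \Ex^\infty(A') = \Sing(Y)$, which is therefore a weak equivalence; since this colimit is exactly $\underline{s\U}_{sSet}(\disc K, i)$, we are done.
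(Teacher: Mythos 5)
Your proposal is correct and follows essentially the same route as the paper: reduce to the discrete lemma by testing against $\disc K$ for compact Hausdorff $K$, use compactness of $K$ to show that $\Ex^\infty$ of the mapping simplicial set exhausts the target, and finish by rerunning the $i \simeq e$ homotopy argument (the paper, like you, cannot literally quote the discrete lemma at this point, since the relevant subcomplex is defined by the pointwise-in-$k$ condition rather than by an open cover). The only real difference is packaging: where you apply the tube lemma to the adjoint maps $K \times \lvert \Delta^n \rvert \to X$ to get a subdivision level uniform in $k$, the paper instead observes that each $\Ex^m(\underline{\Sing}'(X))_n$ is open in $\underline{\Sing}(X)_n$ in the compact-open topology and that compact Hausdorff spaces are small with respect to sequences of open inclusions --- the same compactness argument in a different form.
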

\begin{proof}
It is an easy exercise to check that each $\Ex^m(\underline{\Sing}'(X))$ has $\Ex^m(\Sing'(X))$ as its underlying set, and that its topology is the subspace topology from its inclusion map into $\underline{\Sing}(X)$, defined as in the previous lemma. Hence $\Ex^\infty(\underline{\Sing}'(X)) = \underline{\Sing}(X)$ as before and we have to show the resulting map $\underline{\Sing}'(X) \to \Ex^\infty(\underline{\Sing}'(X))$ is a weak equivalence. This works in exactly the same way as in \cite[Theorem 4.19 (ii)]{Myself}: first, for any $X \in \U$ and $Y \in s\U$, $\underline{s\U}_{sSet}(\disc X,\Ex(Y)) = \Ex(\underline{s\U}_{sSet}(\disc X,Y)$:
\begin{align*}
\underline{s\U}_{sSet}(\disc X,\Ex(Y))_n &= \underline{\U}(X,(\Ex(Y)_n) \\
&= \{\sd \Delta^n,\underline{s\U}_{sSet}(\disc X,Y)\} \\
&= \Ex(\underline{s\U}_{sSet}(\disc X,Y)_n.
\end{align*}
Note that each $\Ex^m(\underline{\Sing}'(X))_n$ is open in $\underline{\U}(\Delta^n,X)$ via the inclusion $i$, by definition of the compact-open topology: it is the subset consisting of the finite intersection (over simplices $K$ of $\lvert \sd^m \Delta^n \rvert$) of the unions (over open sets $U$ in the cover $C$) of the open sets $O(K,U)$. So the maps $$i_n: \Ex^m(\underline{\Sing}'(X))_n \to \Ex^{m+1}(\underline{\Sing}'(X))_n$$ are open inclusions. Since every compact Hausdorff space $K$ is small with respect to open inclusions (that is, any compact Hausdorff subspace of $\underline{\Sing}(X)_n$ must be contained in one of the sequence $\Ex^m(\underline{\Sing}'(X))_n$ of open subspaces), we get $$\underline{s\U}_{sSet}(\disc K,\Ex^\infty(\underline{\Sing}'(X))) = \Ex^\infty(\underline{s\U}_{sSet}(\disc K,\underline{\Sing}'(X)))$$ for all $K$. By the previous lemma, $$i: \underline{s\U}_{sSet}(\disc K,\underline{\Sing}'(X)) \to \Ex^\infty(\underline{s\U}_{sSet}(\disc K,\underline{\Sing}'(X)))$$ is a weak equivalence for all $K$, so the result follows.
\end{proof}

We want a concrete model for the homotopy colimit of $\{\underline{\Sing}(U)\}_{U \in C'}$ (in the compact Hausdorff structure). The model structure on $s\U$ gives a cofibrant replacement functor $Q$: explicitly, for $Y$ in $s\U$, $Q(Y)_0$ is the disjoint union of the compact subspaces of $Y_0$ and $Q(Y)_n$ is the disjoint union of the compact subspaces of the pullback $\partial\Delta^n(Q(Y)) \times_{\partial\Delta^n(Y)} Y_n$. This gives each $Q(Y)_n$ a canonical decomposition as a disjoint union of compact subspaces which we want to fix for later: call these compact subspaces `fat cells'. The intuition is that fat $n$-cells should be treated like single $n$-cells of simplicial sets which have been fattened up.

\begin{lem}
\label{hocolim}
$\colim_{U \in C'} Q(\underline{\Sing}(U))$ is a homotopy colimit for $\{\underline{\Sing}(U)\}_{U \in C'}$.
\end{lem}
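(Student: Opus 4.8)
The plan is to identify the stated colimit with the left derived functor $L\colim$ of $\colim\colon s\U^{C'} \to s\U$. Since $s\U$ is class-cofibrantly generated, the projective model structure on $s\U^{C'}$ exists and $\colim \dashv \Delta$ is a Quillen adjunction, so $L\colim$ is computed by applying $\colim$ to a projective cofibrant replacement of the diagram $\underline{\Sing}(-)\colon C' \to s\U$. Because $Q$ is a functorial cofibrant replacement, the maps $Q(\underline{\Sing}(U)) \to \underline{\Sing}(U)$ are natural in $U$ and are trivial fibrations, so they assemble into a pointwise — hence projective — weak equivalence of diagrams $Q(\underline{\Sing}(-)) \to \underline{\Sing}(-)$. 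The lemma therefore reduces to the single claim that $U \mapsto Q(\underline{\Sing}(U))$ is cofibrant in the projective model structure; granting this, it is a projective cofibrant replacement of $\underline{\Sing}(-)$ and its colimit is by definition a model for the homotopy colimit.

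To prove projective cofibrancy I would exhibit $Q(\underline{\Sing}(-))$ as a cell complex built from the free projective cells $C'(U,-)\cdot(\partial\Delta^n \times K \to \Delta^n \times K)$ on the generating cofibrations in $I$. The explicit fat-cell description of $Q$ supplies the cells: a fat $n$-cell of $Q(\underline{\Sing}(U))$ is a compact subspace of $\partial\Delta^n(Q(\underline{\Sing}(U))) \times_{\partial\Delta^n(\underline{\Sing}(U))} \underline{\Sing}(U)_n$, and I would filter the diagram by fat-cell dimension, attaching the $n$-cells to the $(n-1)$-skeleton. Two features make this work. First, for $U \subseteq V$ the transition map $\underline{\Sing}(U) \to \underline{\Sing}(V)$ is a levelwise open inclusion — the condition that a based map $\Delta^n \to V$ factor through the open set $U$ is open in the compact-open topology — so $Q$ sends it to a monomorphism of fat-cell complexes. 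Second, for any compact subspace $K$ of some $\underline{\Sing}(U)_n$, the set of $W \in C'$ with $K \subseteq \underline{\Sing}(W)_n$ is closed under finite intersection, since a family of simplices landing simultaneously in $W_1$ and $W_2$ lands in $W_1 \cap W_2 \in C'$. This closure is what lets each fat cell be attached over a well-defined representable piece of the poset, so that the latching map $\colim_{U' \subsetneq U} Q(\underline{\Sing}(U')) \to Q(\underline{\Sing}(U))$ is the inclusion of those fat cells already present in a proper sub-intersection, with the genuinely new fat cells of $U$ glued on freely.

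The main obstacle is precisely the verification that these latching maps are cofibrations, i.e. that $Q(\underline{\Sing}(-))$ is a free cell complex and not merely a pointwise-cofibrant diagram whose transition maps happen to be cofibrations. The danger is a fat cell shared by two incomparable elements $U_1, U_2 \in C'$ being attached twice or along the wrong subdiagram, and it is here that the definition of $C'$ as the poset of \emph{all} finite intersections, rather than just the cover $C$, is indispensable: closure under meets guarantees that such a shared cell already lives over $U_1 \cap U_2$ and is therefore counted once, in the correct latching object. Once the latching maps are identified as the attachment of free fat cells — relative cell complexes built from the maps in $I$ — each is a cofibration in $s\U$, the diagram is projectively cofibrant, and the reduction of the first paragraph finishes the proof.
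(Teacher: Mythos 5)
Your opening reduction -- show that the diagram $U \mapsto Q(\underline{\Sing}(U))$ is projectively cofibrant, then conclude because $Q$ gives a pointwise weak equivalence to $\underline{\Sing}(-)$ -- is exactly the paper's strategy, and so is the plan of building the diagram from free cells along the fat-cell filtration. But the step you yourself call the main obstacle is where the argument genuinely fails, and closure of $C'$ under \emph{finite} intersections does not repair it. For a fat cell $K$ to be attached freely, the up-set $S_K = \{W \in C' : \operatorname{im}(K) \subseteq W\}$ must be principal, i.e.\ have a least element $V$, so that the attachment is a pushout of the generating projective cofibration concentrated on $C'(V,-)$. Meet-closedness of $S_K$ gives a least element only when $S_K$ is finite (or otherwise happens to have a minimum). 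In general it has none: if the image of $K$ -- say a single point $x$ -- lies in infinitely many members of $C$, then $S_K$ contains an infinite strictly descending chain $U_1 \supseteq U_1 \cap U_2 \supseteq \cdots$ with no lower bound in $C'$, because $\bigcap_i U_i$ is not a finite intersection. A cell supported on such a non-principal up-set is not projectively cofibrant: take $X = [0,1]$, $C = \{[0,1/n)\}_{n \geq 1} \cup \{(0,1]\}$, and the fat $0$-cell $\{0\}$, whose support in $C'$ is exactly the chain $W_n = [0,1/n)$; the would-be free cell is the constant diagram at a point on this chain, and it admits no lift against the pointwise trivial fibration whose value over $W_n$ is $[n,\infty) \to \ast$, since a compatible family of sections would have to land in $\bigcap_n [n,\infty) = \emptyset$. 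So your latching-map induction cannot even get started at such cells, and the claimed cell structure over $C'$ does not exist.

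The paper's fix is precisely to change the indexing poset: it introduces $C''$, the poset of \emph{all} (not just finite) intersections of members of $C$. There every fat cell does have a minimal home, namely $V = \bigcap\{U \in C : \operatorname{im}(K) \subseteq U\} \in C''$, and the diagram $\{Q(\underline{\Sing}(U))\}_{U \in C''}$ is shown to be projectively cofibrant by attaching each cell over the representable at this $V$ (the diagrams $F_0(U) = K \times \partial\Delta^n$ and $F(U) = K \times \Delta^n$ for $V \subseteq U$, empty otherwise). The colimit over $C''$ is then identified with the colimit over $C'$ by cofinality of the inclusion $C' \subseteq C''$: every element of $C''$ is contained in some element of $C'$, and the relevant comma posets are meet-closed, hence connected. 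Your argument becomes correct if you run the cell-attachment over $C''$ and add this cofinality step; as written over $C'$, there is a genuine gap.
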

\begin{proof}
Let $C''$ be the poset of all intersections of open sets in $C$. We will show $\{Q(\underline{\Sing}(U))\}_{U \in C''}$ is cofibrant in the projective model structure induced from the compact Hausdorff model structure on $s\U$; the colimit of $\{Q(\underline{\Sing}(U))\}_{U \in C''}$ is the colimit of $\{Q(\underline{\Sing}(U))\}_{U \in C'}$ by cofinality, and the result follows. The argument echoes the second part of the proof of \cite[Proposition A.3.2]{LurieHA}.

Write $Q(\underline{\Sing}'(X))$ as a transfinite colimit $(A_\alpha)$ of pushouts by maps of the form $\partial\Delta^n \times K_\alpha \to \Delta^n \times K_\alpha$ with $K_\alpha$ compact Hausdorff (that is, the generating cofibrations of the model structure), ordered by dimension. Identify each $Q(\underline{\Sing}(U))$ with its image as a simplicial subspace of $Q(\underline{\Sing}'(X))$, consisting of a subset of the fat cells. Then the result follows by showing that for each $A_\alpha \to A_{\alpha+1}$ the induced map $$\{Q(\underline{\Sing}(U) \cap A_\alpha)\}_{U \in C''} \to \{Q(\underline{\Sing}(U) \cap A_{\alpha+1})\}_{U \in C''}$$ is a cofibration in the projective model structure, since these are closed under transfinite composition. Let $V \in C''$ be the intersection of sets in $C$ which contain the image of $\Delta^n \times K_\alpha$: then this map is a pushout by the projective cofibration $F_0 \to F$, where
\[
F_0(U) =
\begin{cases}
K_\alpha \times \partial\Delta^n & \text{if } V \subseteq U,\\
\emptyset & \text{otherwise,}
\end{cases}
F(U) =
\begin{cases}
K_\alpha \times \Delta^n & \text{if } V \subseteq U,\\
\emptyset & \text{otherwise.}
\end{cases}
\]
\end{proof}

It remains to show that $\colim_{U \in C'} Q(\underline{\Sing}(U))$ is weakly equivalent in the regular structure to $\underline{\Sing}(X)$. But first we will give an example to show that it is not a weak equivalence in general in the compact Hausdorff structure.

Factor the canonical map $\colim_{U \in C'} Q(\underline{\Sing}(U)) \to \underline{\Sing}(X)$ using the functorial factorisation into a trivial cofibration followed by a fibration $$\colim_{U \in C'} Q(\underline{\Sing}(U)) \to Z \to \underline{\Sing}(X).$$

\begin{example}
\label{pseudo}
Let $X$ be the pseudo-arc, as defined in \cite{Bing}. This is a compact Hausdorff space (so in $\U$) which is connected and totally path-disconnected. Let $C$ be any open cover of $X$ which does not contain $X$ itself.

To show the fibration $Z \to \underline{\Sing}(X)$ is not a trivial fibration it is enough to show that $Z_0 \to \underline{\Sing}(X)_0$ is not $CH$-split, in the terminology of \cite{Myself}: that is, that not every map from a compact Hausdorff space to $\underline{\Sing}(X)_0$ lifts to a map to $Z_0$. More specifically, since $\underline{\Sing}(X)_0 = X$ is itself compact Hausdorff, it is enough to show $Z_0 \to X$ does not split.

Indeed, in the functorial factorisation, $Z$ is the colimit of a sequence $Z^n \in s\U$ where new $0$-cells in $Z^n$ are attached via spaces of $1$-cells in $\underline{\Sing}(X)$ (or equivalently via spaces of paths in $X$) with one end in $Z^{n-1}$, and $Z^0 = \colim_{U \in C'} Q(\underline{\Sing}(U))$. Since all paths in $X$ are constant, and all spaces of $0$-cells in $Z^0$ are disjoint unions of spaces whose image is contained in some $U \in C$, the same is true of $Z$. So $Z_0$ is a disjoint union of spaces whose image is not the whole of $X$. Therefore any splitting $X \to Z_0$ would disconnect $X$, giving a contradiction.
\end{example}

We will prove the theorem by showing that $Z \to \underline{\Sing}(X)$ is a trivial fibration in the regular structure. To do this we must first understand $Z$ better. We set $Z^0 = \colim_{U \in C'} Q(\underline{\Sing}(U))$. Given $Z^{m-1}$, for all $n \in \mathbb{N}$, for all $0 \leq k \leq n$, for every compact subspace $K$ of $\Lambda^n_k(Z^{m-1}) \times_{\Lambda^n_k(\underline{\Sing}(X))} \underline{\Sing}(X)_n$, we attach $K$ $(m-1)$-cells and $K$ $m$-cells filling in this space of $n$-horns, and call the resulting space $Z^m$. Then $Z$ is the colimit of the sequence $Z^0 \to Z^1 \to \cdots$ with the obvious inclusion maps.

The strategy will be to show that the map $$Z_n \to \partial\Delta^n(Z) \times_{\partial\Delta^n(\underline{\Sing}(X))} \underline{\Sing}(X)_n$$ is a $\beta$-epimorphism for all $n$, in the terminology of \cite{Strickland}: that is, that for every compact subspace of the pullback there is a compact subspace of $Z_n$ mapping onto it. The proposition will follow since $\beta$-epimorphisms are regular epimorphisms by \cite[Proposition 3.12]{Strickland}.

We do this by proving a slightly stronger result. Instead of just considering the standard $n$-simplex $\Delta^n$, we wish to consider every finite triangulation $\Sigma$ of the $n$-ball. Just as for $\Delta^n$, we may consider its boundary $\partial\Sigma$, which is a finite triangulation of the $n$-sphere. Just as we may speak of spaces of $n$-boundaries $\partial\Delta^n(Z)$ in a simplicial space $Z$, we may define $\partial\Sigma(Z) = \{\partial\Sigma,Z\}$, the weighted limit of $Z$ over $\partial\Sigma$; for more detail on weighted limits see \cite{Riehl}.

Theorem \ref{singhoco} will follow once we prove:

\begin{prop}
\label{beta}
For all such $\Sigma$, the map $$\{\Sigma,Z\} \to \{\partial\Sigma,Z\} \times_{\{\partial\Sigma,\underline{\Sing}(X)\}} \{\Sigma,\underline{\Sing}(X)\}$$ is a $\beta$-epimorphism.
\end{prop}

We prove this in several steps.

By Lemma \ref{finitelim}, for each finite triangulation $\Sigma$, $\{\Sigma,Z\}$ is a finite limit, so it is the union of the closed subspaces $\{\Sigma,Z^m\}$, so any compact subspace of $\{\Sigma,Z\}$ is contained in some $\{\Sigma,Z^m\}$. Similarly using $\partial\Sigma$. This allows us to use an inductive argument; we start with the base case.

Suppose we have a compact subspace $K$ of $\{\partial\Sigma,Z\} \times_{\{\partial\Sigma,\underline{\Sing}(X)\}} \{\Sigma,\underline{\Sing}(X)\}$. Label the canonical maps from the pullback $$\{\partial\Sigma,Z\} \xleftarrow{f} \{\partial\Sigma,Z\} \times_{\{\partial\Sigma,\underline{\Sing}(X)\}} \{\Sigma,\underline{\Sing}(X)\} \xrightarrow{g} \{\Sigma,\underline{\Sing}(X)\}.$$ We can assume that each simplex of $f(K)$ (that is, the image of $f(K)$ under the map $\{\partial\Sigma,Z\} \to \{\Delta^m,Z\}$ induced by a simplex inclusion $\Delta^m \to \partial\Sigma$) is contained in a single fat cell, since $K$ is the disjoint union of finitely many spaces of this form.

\begin{prop}
If $f(K)$ is contained in the subspace $\{\partial\Sigma,Z^0\}$, there is a compact cover $K_1, \ldots, K_j$ of $K$ such that the inclusion map of each $K_i$ into $\{\partial\Sigma,Z\} \times_{\{\partial\Sigma,\underline{\Sing}(X)\}} \{\Sigma,\underline{\Sing}(X)\}$ lifts to a map $K_i \to \{\Sigma,Z\}$.
\end{prop}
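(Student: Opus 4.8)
The plan is to fill the interior of $\Sigma$ over $K$ by a prism argument: I will build a simplicial homotopy $\Sigma \times \Delta^1 \to Z$ whose top face is the lift I want, whose sides carry the prescribed boundary datum $f(K)$, and whose bottom face is a \emph{finely subdivided} copy of $\Sigma$ that can be lifted directly into $Z^0$. The engine throughout is that $Z \to \underline{\Sing}(X)$ is a fibration in the compact Hausdorff structure --- so any horn in $Z$ lying over a simplex of $\underline{\Sing}(X)$ can be filled --- together with the fact that $Z^0$ receives every simplex of $X$ that factors through a member of the cover. First I would pass to the adjoint picture, viewing $K$ as a compact Hausdorff space parametrising pairs $(\,f(K)\colon \partial\Sigma \to Z^0,\ g(K)\colon \Sigma \to \underline{\Sing}(X)\,)$; since each simplex of $f(K)$ sits in a single fat cell, the boundary already factors, simplex by simplex, through members of $C$. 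Because $K$ and $|\Sigma|$ are compact and every compact Hausdorff space carries a unique uniformity, a Lebesgue-number argument applied to the pullback of $C$ along $K \times |\Sigma| \to X$ produces a single subdivision depth $m$, uniform in the parameter, after which every simplex of $\sd^m\Sigma$ is carried into some member of $C$ for every point of $K$.

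I would then take the prism $\Sigma \times \Delta^1$, with $\Sigma$ at the top and $\sd^m\Sigma$ at the bottom, mapping it to $\underline{\Sing}(X)$ by the canonical homotopy relating a simplex to its subdivision --- the very homotopy between $i$ and $e$ used in the proof of Lemma \ref{colimwe}. The bottom face now factors entirely through members of $C$, so, choosing fat cells compatibly up the skeleta (the cells of each $Q(\underline{\Sing}(U))$ carrying their own faces), it lifts into $Z^0$. The sides $\partial\Sigma \times \Delta^1$ I would lift by induction on the dimension of $\Sigma$: they form the prism over $\partial\Sigma$, connecting the prescribed $f(K)$ at the top to the fine bottom lift, and this is an instance of the same statement one dimension down, the induction bottoming out at vertices, where both ends are single $0$-cells of $Z^0$ over the same point of $X$ and may be chosen equal so that the side prism is degenerate. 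The point of the fine subdivision is that it renders the prism collapsible onto the union of its bottom face and its sides, each elementary collapse being a horn inclusion; so with the bottom and sides already lifted and the prism's image in $\underline{\Sing}(X)$ in hand, the fibration property fills the remaining cells one at a time, and the top face is the sought lift $\Sigma \to Z$, restricting to $f(K)$ on $\partial\Sigma$ and covering $g(K)$ by construction.

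The passage to a finite compact cover $K_1, \ldots, K_j$ is forced precisely by the choices the construction makes in the parameter: which member of $C$ (equivalently which $U \in C'$) and which fat cell of $Z^0$ receives each bottom simplex, and --- because the horn-fillers of $Z$ are attached only for compact families of horns, so that $Z \to \underline{\Sing}(X)$ is a $\beta$-epimorphism whose fillers need not admit a continuous section over all of $K$ --- which attached filler is used at each collapse step. Each such choice is locally constant on $K$, so pulling back the resulting open covers of $K$ and refining to a finite subcover yields compact pieces $K_1, \ldots, K_j$ on each of which every choice is consistent and the construction of the previous paragraph runs continuously, producing an honest lift $K_i \to \{\Sigma, Z\}$. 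Lemma \ref{finitelim} ensures that the compact families involved land in a finite stage $Z^m$ of the colimit $Z = \colim_m Z^m$, so that the inductive filling terminates; this is exactly why the conclusion is stated as the existence of a finite cover with lifts, feeding into the $\beta$-epimorphism of Proposition \ref{beta}.

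The main obstacle is the reconciliation carried out in the second paragraph: respecting the prescribed boundary lift $f(K)$ while invoking only horn-fillers, never a genuine boundary (trivial-cofibration) filling, which the construction of $Z$ does not provide. A direct attempt to extend $f(K)$ across $\Sigma$ fails for just this reason; it is the combination of a bottom face anchored in $Z^0$ by the fine subdivision and the collapsibility of the subdivided prism onto its bottom and sides that converts the problem into a sequence of horn-fillings. The delicate points are to organise the collapse so that it is compatible with the chosen fat-cell data along the bottom and with $f(K)$ along the sides, and to make every choice continuously over each $K_i$ --- so that what is produced is a map into $\{\Sigma, Z\}$ and not merely a pointwise family of lifts.
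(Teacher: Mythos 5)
Your overall skeleton --- a uniform subdivision depth obtained by compactness, a prism $\Sigma \times \Delta^1$ with the finely subdivided copy at the bottom lifted into $Z^0$ by fat cells, collapsibility of the prism onto bottom-union-sides so that horn fillings (the fibration property) produce the free top face, and a finite compact cover to make all choices run continuously --- matches the paper's proof closely. The genuine gap is in your treatment of the sides, i.e.\ in reconciling the prescribed boundary datum $f(K)$ with the bottom lift you construct. The side prism over an $(n-1)$-simplex $\sigma$ of $\partial\Sigma$ is an $n$-dimensional filling problem in which the \emph{entire} boundary is prescribed: the top ($f(K)|_\sigma$), the bottom (your fine lift), and the lower side prisms supplied by your induction. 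This is not ``the same statement one dimension down'': it is a same-dimension problem, and its prescribed data does not lie in $Z^0$ --- the inductively produced lower lifts are created by horn fillings and hence land in later stages of $Z$ --- so what you are really invoking is Proposition \ref{beta} itself, whose proof rests on the present proposition as its base case; the induction is circular. Worse, filling a ball relative to its entire prescribed boundary is lifting against $\partial\Delta^n \to \Delta^n$, a trivial-fibration property that $Z \to \underline{\Sing}(X)$ does not have: Example \ref{pseudo} exists precisely to show it is not a trivial fibration. Horn filling produces a cell with one free face; it never connects two independently prescribed lifts.

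The failure is already visible at your base case. The vertices of the bottom lift cannot in general ``be chosen equal'' to those of $f(K)$: a fat cell of $Q(\underline{\Sing}(U))$ carries its faces with it, so once you choose which $U \in C$ and which fat cell receives a small bottom simplex, the vertices of that lift are forced canonical $0$-cells attached to that particular $U$, whereas the prescribed vertices of $f(K)$ are arbitrary points of $Z^0_0$ --- typically lying over the same point of $X$ but distinct as points of $Z_0$ --- and no horn filling joins two prescribed points of $Z_0$ over a degenerate $1$-simplex of $\underline{\Sing}(X)$. What the paper does to close exactly this gap is the step your proposal lacks: it first replaces $f(K)$ by a standardized datum $A(K)$ whose cells are the canonical fat cells determined by the compact images in the $\underline{\Sing}(U)$'s, and observes that $A(K)$ and $f(K)$ are connected by explicit simplicial homotopies lying \emph{inside} $Z^0$ (this uses the richness of $Q$: there is a fat cell for every compact subspace of the boundary pullback, in particular for constant-image homotopies between different lifts over the same $U$). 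It then runs your subdivision argument against $A(K)$, where the subdivision prism's sides also exist canonically in $Z^0$ so that only the free top face needs the fibration, and finally transports the resulting lift from boundary $A(K)$ back to boundary $f(K)$ by one more prism whose sides are those $Z^0$-homotopies and whose free face is again produced by horn filling. Without $A(K)$, or some equivalent device internal to $Z^0$, your construction cannot respect the prescribed $f(K)$.
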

\begin{proof}
We will start by replacing $K$ with an associated subspace $A(K)$ of $\{\partial\Sigma,Z\} \times_{\{\partial\Sigma,\underline{\Sing}(X)\}} \{\Sigma,\underline{\Sing}(X)\}$. For each $m$-cell $D$ of $\Sigma$, $K$ has a space $K_D$ of $m$-cells in $Z^0 = \colim_{C'} Q(\underline{\Sing}(U)$, which by hypothesis is contained in a single fat cell, that is, some compact subspace of some $U \in C$. The image of $K_D$ in $\underline{\Sing}(U)$ is compact, so it corresponds to some fat cell $K'_D$ in $Z^0$. Replacing $K_D$ with $K'_D$ is functorial in the cells of $\partial\Sigma$, so together these spaces of cells give a subspace $K'$ of $\{\partial\Sigma,Z\}$, homeomorphic to $f(K)$ whose image in $\{\partial\Sigma,\underline{\Sing}(X)\}$ is the same. Then let $A(K)$ be the pullback of $K' \to \{\partial\Sigma,\underline{\Sing}(X)\} \leftarrow g(K)$. This is canonically homeomorphic to $K$, and it is not hard to see that there is a series of simplicial homotopies in $Z^0$ between $A(K)$ and $K$.

Now consider $g(A(K)) = g(K)$. We use the same compactness argument as Lemma \ref{colimwe}: applying barycentric subdivision $k$ times to $\Sigma$, each $\{\sd^k \Sigma, \underline{\Sing}'(X)\}$ is an open subspace of $\{\Sigma, \underline{\Sing}(X)\}$ and $\bigcup_k \{\sd^k \Sigma, \underline{\Sing}'(X)\} = \{\Sigma, \underline{\Sing}(X)\}$. Since $g(K)$ is compact there must be some $\{\sd^k \Sigma, \underline{\Sing}'(X)\}$ containing it. In the compact-open topology, the space $\{\sd^k \Sigma, \underline{\Sing}'(X)\} = \{$maps from $\sd^k \Sigma$ to $X$ such that each simplex maps into some $U \in C\}$ has an open cover by sets of the form $\bigcap_{D \in \sd^k \Sigma}\bigcup_{U \in C}O(D,U)$. This gives an open cover of $g(K)$ which pulls back to an open cover of $A(K)$; pick a finite subcover $V_1, \ldots, V_j$. By the Shrinking Lemma of \cite[Exercise 4.36.4]{Munkres}, we can find a cover of $A(K)$ by compact subspaces $K_1, \ldots, K_j$ such that $K_i \subseteq V_i$ for each $i$. For each simplex $D$ in $\sd^k \Sigma$, there is some $U \in C$ such that every element of $g(K_i)$ maps $D$ into $U$. Therefore the inclusion $K_i \to \{\partial\sd^k\Sigma,Z^0\} \times_{\{\partial\sd^k\Sigma,\underline{\Sing}(X)\}} \{\sd^k\Sigma,\underline{\Sing}(X)\}$ lifts to a map $K_i \to \{\sd^k\Sigma,Z^0\}$. Because $Z \to \underline{\Sing}(X)$ is a fibration, we can stick all these spaces of cells together to get a lift $K_i \to \{\Sigma,Z\}$.

Because $Z \to \underline{\Sing(X)}$ is a fibration in the compact Hausdorff structure, we may now use the series of simplicial homotopies between $A(K)$ and $K$ to get a compact cover $K'_1, \ldots, K'_j$ of $K$ whose inclusion maps into $\{\partial\Sigma,Z\} \times_{\{\partial\Sigma,\underline{\Sing}(X)\}} \{\Sigma,\underline{\Sing}(X)\}$ lift to maps $K'_i \to \{\Sigma,Z\}$.
\end{proof}

\begin{proof}[Proof of Proposition \ref{beta}]
Now suppose we have a compact subspace $K$ of $\{\partial\Sigma,Z\} \times_{\{\partial\Sigma,\underline{\Sing}(X)\}} \{\Sigma,\underline{\Sing}(X)\}$: $f(K)$ is contained in some $\{\partial\Sigma,Z^m\}$, $m>0$. Suppose we have shown, for every compact subspace $K'$ of $\{\partial\Sigma,Z\} \times_{\{\partial\Sigma,\underline{\Sing}(X)\}} \{\Sigma,\underline{\Sing}(X)\}$ such that $f(K')$ is contained in $\{\partial\Sigma,Z^{m-1}\}$, that there is a compact cover $K'_1, \ldots, K'_j$ of $K'$ such that the inclusion map of each $K'_i$ into the pullback lifts to a map $K'_i \to \{\Sigma,Z\}$.

Suppose $\Sigma$ is a triangulation of the $(n+1)$-ball. In the process of constructing $Z$, there are two possibilities: a fat $k$-cell in $Z^m$ can be added by a $(k+1)$-horn and have all its faces in $Z^{m-1}$, or it can be added by a $k$-horn and have one face in $Z^m \setminus Z^{m-1}$, and all other faces in $Z^{m-1}$. Therefore $f(K)$ can only have fat $n$-cells and fat $(n-1)$-cells in $Z^m \setminus Z^{m-1}$; moreover, any of its fat $(n-1)$-cells in $Z^m \setminus Z^{m-1}$ is only the face of one fat $n$-cell in $Z^m$, but it is the face of two fat $n$-cells in $f(K)$, so these two fat cells must be the same. So there are two situations we will deal with:
\begin{enumerate}[(i)]
\item a fat $n$-cell in $Z^m \setminus Z^{m-1}$ with all its faces in $Z^{m-1}$;
\item two copies of the same fat $n$-cell in $Z^m \setminus Z^{m-1}$ stuck together at one face in $Z^m \setminus Z^{m-1}$.
\end{enumerate}

\begin{enumerate}[(i)]
\item For each fat $n$-cell $D$ in case (i), we have a fat $(n+1)$-cell $D'$ that has $D$ as one face and all other faces in $Z^{m-1}$. Write $\Sigma'$ for the triangulation of the $(n+1)$-ball obtained by attaching a new $(n+1)$-cell to $\Sigma$ at the face $\sigma$ corresponding to $D$. Consider the new compact subspace $K'$ of $\{\partial\Sigma',Z\} \times_{\{\partial\Sigma',\underline{\Sing}(X)\}} \{\Sigma',\underline{\Sing}(X)\}$: for each element of $K$, change its image in $\{\Sigma,\underline{\Sing}(X)\}$ by attaching a new $(n+1)$-cell via the map attaching $D'$ at $D$, and change its image in $\{\partial\Sigma,Z\}$ by replacing the image of $\sigma$ with an $(n+1)$-horn via the map attaching $D'$ at $D$. Note that $f(K')$ has fewer fat $n$-cells in $Z^m \setminus Z^{m-1}$ than $f(K)$. Note too that, if we can find a compact cover $K'_1, \ldots, K'_j$ of $K'$ such that the inclusion map of each $K'_i$ into the pullback lifts to a map $K'_i \to \{\Sigma',Z\}$, we can use the fact that $Z \to \underline{\Sing}(X)$ is a fibration in the compact Hausdorff structure to get a compact cover $K_1, \ldots, K_j$ of $K$ such that the inclusion map of each $K_i$ into $\{\partial\Sigma,Z\} \times_{\{\partial\Sigma,\underline{\Sing}(X)\}} \{\Sigma,\underline{\Sing}(X)\}$ lifts to a map $K_i \to \{\Sigma,Z\}$.

So by applying this procedure finitely many times we reduce to the case where $f(K)$ has no fat $n$-cells of this type in $Z^m \setminus Z^{m-1}$.

\item For each fat $n$-cell $D$ in case (ii), the approach is similar. Let $\Sigma'$ be the triangulation given by attaching a new $(n+1)$-cell to $\Sigma$ at the two faces corresponding to $D$. Consider the new compact subspace $K'$ of $\{\partial\Sigma',Z\} \times_{\{\partial\Sigma',\underline{\Sing}(X)\}} \{\Sigma',\underline{\Sing}(X)\}$: for each element of $K$, change its image in $\{\Sigma,\underline{\Sing}(X)\}$ by attaching a new degenerate $(n+1)$-cell coming from the degeneracy maps on $D$ to the faces corresponding to $\sigma_1$ and $\sigma_2$, and change its image in $\{\partial\Sigma,Z\}$ by replacing the images of $\sigma_1$ and $\sigma_2$ with the other faces of the new $(n+1)$-cell. These other faces are the degeneracies of $(n-2)$-cells in $f(K)$, so they are in $Z^{m-1}$; so, as before, we reduce the number of fat cells in $Z^m \setminus Z^{m-1}$. As before, if we can find a compact cover $K'_1, \ldots, K'_j$ of $K'$ such that the inclusion map of each $K'_i$ into the pullback lifts to a map $K'_i \to \{\Sigma',Z\}$, we can use the fact that $Z \to \underline{\Sing}(X)$ is a fibration in the compact Hausdorff structure to get a compact cover $K_1, \ldots, K_j$ of $K$ such that the inclusion map of each $K_i$ into $\{\partial\Sigma,Z\} \times_{\{\partial\Sigma,\underline{\Sing}(X)\}} \{\Sigma,\underline{\Sing}(X)\}$ lifts to a map $K_i \to \{\Sigma,Z\}$.

So by applying this procedure finitely many times we reduce to the case where $f(K)$ has no fat $n$-cells in $Z^m \setminus Z^{m-1}$, and we are done.
\end{enumerate}
\end{proof}

\section{Excision}
\label{excision}

If the weak equivalence in the regular structure proved in Theorem \ref{singhoco} were a weak equivalence the CH structure, an excision theorem for homology would be an easy corollary: since left derived functors preserve homotopy colimits, we could apply $LF$ to get a result analogous to \cite[Proposition 2.21]{Hatcher}, and deduce excision from there exactly as in \cite{Hatcher}. As it is not, we have to work a bit harder.

As before, let $C$ be an open cover of $X \in \U$, and write $C'$ for the poset of finite intersections of sets in $C$, ordered by inclusion. The idea here is that the map $\colim_{U \in C'} Q(\underline{\Sing}(U)) \to Q(\underline{\Sing}'(X))$ looks a lot like a trivial cofibration, even though it is not one in general. We can produce $Q(\underline{\Sing}'(X))$ from $\colim Q(\underline{\Sing}(U))$ by attaching collections of cells which are close enough to trivial cofibrations that they behave nicely under $F$.

Let $Y^0 = \colim Q(\underline{\Sing}(U))$. We will create $Y^\alpha$, for ordinals $\alpha$, by adding spaces of cells to $Y^{\alpha-1}$ when $\alpha$ is a successor, and by taking $Y^\alpha = \colim_{\beta < \alpha} Y^\beta$ for $\alpha$ a limit. At each step $Y^\alpha$ will be a simplicial subspace of $Q(\underline{\Sing}'(X))$ consisting of a subset of the fat cells.

We need two new operations: filling empty horns and gluing fat cells together.

An empty $n$-horn in $Y^\alpha$ is a fat $n$-cell $K$ of $Q(\underline{\Sing}'(X))$ which is not in $Y^\alpha$, such that all but one of its face maps $K \to Q(\underline{\Sing}'(X))_{n-1}$ factors through $Y^\alpha$ but there is some $k$ such that its $k$th face map does not. We can attach $K$ $(n-1)$-cells and $K$ $n$-cells to $Y^\alpha$ via these maps, and refer to this as filling an empty horn in $Y^\alpha$.

Suppose we have a fat $n$-cell $K$ of $Q(\underline{\Sing}'(X))$ which is not in $Y^\alpha$, but all of its face maps factor through $Y^\alpha$. Suppose there is a finite cover of $K$ by compact subspaces $K_1, \ldots, K_j$, such that there are fat $n$-cells $D_i$ in $Y^\alpha$ for each $i$ and $D_i$ and $K_i$ have the same image in the pullback $\partial\Delta^n(Y^\alpha) \times_{\partial\Delta^n(Q(\underline{\Sing}'(X)))} Q(\underline{\Sing}'(X))_n$ for each $i$. We want to glue the $K_i$s together to get $K$, but it is not yet clear whether this can done without changing $Y^\alpha$ homotopically `too much', in a sense which will become clear later. For now, call this a $1$-valid opportunity.

If, for each non-empty intersection $K_i \cap K_j$, $Y^\alpha$ has a fat cell $D_{i,j}$ of $K_i \cap K_j$ $(n+1)$-cells filling the space of $(n+1)$-boundaries created by the subspaces $K_i \cap K_j$ of cells of $D_i$ and $D_j$, we say the opportunity is $2$-valid.

Similarly, we inductively say an $(r-1)$-valid opportunity is $r$-valid if, for each non-empty intersection $K_{i_1} \cap \cdots \cap K_{i_r}$, $Y^\alpha$ has a fat cell $D_{i_1, \ldots, i_r}$ of $K_{i_1} \cap \cdots \cap K_{i_r}$ $(n+r-1)$-cells filling the space of $(n+r-1)$-boundaries created by the subspaces $K_{i_1} \cap \cdots \cap K_{i_r}$ of cells of $D_{i_1, \ldots, i_{s-1}, i_{s+1}, \ldots, i_r}$ for $1 \leq s \leq r$. If an opportunity is $r$-valid for all $r$ (equivalently, if it is $j$-valid), we say it is valid, and call $n$ its dimension.

Given a valid opportunity, we can glue fat cells together. Attach $D$ to $Y^\alpha$. For each $i$, attach a fat cell $E_i$ of $K_i$ $(n+1)$-cells to the space of $(n+1)$-boundaries created by $D_i$ and the subspace $K_i$ of cells of $D$. Continue inductively: for each non-empty intersection $K_{i_1} \cap \cdots \cap K_{i_r}$, attach a fat cell $E_{i_1, \ldots, i_r}$ of $K_{i_1} \cap \cdots \cap K_{i_r}$ $(n+r)$-cells to the space of $(n+r)$-boundaries created by $D_{i_1, \ldots, i_r}$ and the subspaces $K_{i_1} \cap \cdots \cap K_{i_r}$ of cells of $E_{i_1, \ldots, i_{s-1}, i_{s+1}, \ldots, i_r}$ for $1 \leq s \leq r$. Note that this process terminates after at most $j$ steps.

Now we can define $Y^\alpha$ inductively: for $\alpha$ a successor, if $Y^{\alpha-1}$ has any empty horns, pick one, fill it, and call the result $Y^\alpha$. If it has no empty horns, but has valid opportunities to glue fat cells together, pick one of minimal dimension, take it, and call the result $Y^\alpha$. For $\alpha$ a limit we let $Y^\alpha = \colim_{\beta < \alpha} Y^\beta$.

Note that when $Y^\alpha$ is a simplicial subspace of $Q(\underline{\Sing}'(X))$ consisting of a subset of the fat cells, $Y^{\alpha+1}$ is too, so we can see inductively that this is true for all $Y^\alpha$. So this process must terminate: there is some ordinal $\gamma$ such that $Y^\gamma$ has no empty horns and no valid opportunities.

As we are building $Y^\gamma$, we can label each fat cell $K$ with a finite compact cover $l(K)$ of the space of cells. For fat cells in $Y^0$, take $l(K) = \{K\}$, and assume we have labelled all the fat cells in $Y^\beta$ for all $\beta < \alpha$. Assume that all the labels for $\beta < \alpha$ have the following two properties. First, any face map of a fat cell $K$ in $Y^\gamma$ sends any $K_i \in l(K)$ into some element of the label of the face. Second, for any fat cell $K$ in $Y^\gamma$ and $K_i \in l(K)$, the image of the subspace $K_i$ in $\underline{\Sing}'(X)$ is contained in some $\underline{\Sing}(U)$.

For $\alpha$ a limit there is no more to do; suppose it is a successor. When we create $Y^\alpha$ from $Y^{\alpha-1}$ by filling an empty $k$-horn, each face map except the $k$th on the new fat cell pulls back to a finite compact cover of $K$; pick a common refinement of all of these: that is, a finite compact cover $K_1, \ldots, K_j$ such that the image of every $K_i$ under every face map except the $k$th is contained in one of the compact spaces in the finite cover labelling that space. For each $K_i$, we get an open cover by sets of the form $\{x \in K_i:$ the image of $\{x\}$ in $K \to \underline{\Sing}'(X)$ is contained in $\underline{\Sing}(U), U \in C\}$. Pick a finite subcover of this, and then a compact refinement $K_{i,1}, \ldots, K_{i,l_i}$, which exists by the Shrinking Lemma, \cite[Exercise 4.36.4]{Munkres}. The compact cover of $K$ given by $\{K_{1,1}, \ldots, K_{1,l_1}, \ldots, K_{j,1}, \ldots, K_{j,l_j}\}$ has the required properties, showing such a cover exists. Now take a compact cover of $K$ of minimal size satisfying these properties to be the labels of $K$ and its $k$th face.

If $Y^\alpha$ comes from $Y^{\alpha-1}$ by gluing cells, we are attaching finitely many fat cells by filling spaces of boundaries. For each of these fat cells $K$, we pick some finite compact cover which is a common refinement of all the finite compact covers induced by the face maps. We refine it further, as above, to show a label satisfying the required properties exists; then we label $K$ with a label of minimal size satisfying these properties.

\begin{prop}
Consider a fat cell $K$ in $Q(\underline{\Sing}'(X))$ whose image in $\underline{\Sing}(X)$ is contained in some $\underline{\Sing}(U)$, whose image under each face map is in some $Y^\alpha$, and each such image is contained in a single element of the label of a single fat cell. Then $K$ is in some $Y^\beta$, $\beta \geq \alpha$, obtained from $Y^\alpha$ just by filling empty horns.
\end{prop}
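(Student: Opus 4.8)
The plan is to add $K$ not as an $n$-cell with a missing face (it has none, since by hypothesis every face of $K$ already lies in $Y^\alpha$), but as the unique missing face of an empty $(n+1)$-horn. Concretely, I would look for a fat $(n+1)$-cell $L$ of $Q(\underline{\Sing}'(X))$ whose image lies in $\underline{\Sing}(U)$, with $\partial_k L = K$ for some $k$, and with every other face $\partial_i L$ (for $i \neq k$) already present in $Y^\alpha$, or else added beforehand by further empty-horn fillings. Once such an $L$ is found it is an empty $(n+1)$-horn whose only missing face is $K$, and filling it adjoins $K$ without ever invoking the gluing operation. Since $K$ has image in $\underline{\Sing}(U)$, any $L$ built inside $\underline{\Sing}(U)$ factors through the cover automatically, so it is a legitimate fat cell of $Q(\underline{\Sing}'(X))$; this is exactly where the single-$\underline{\Sing}(U)$ hypothesis is used.

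To produce $L$ I would exploit that $\underline{\Sing}(U)$ is an internal Kan complex: the restriction $\underline{\Sing}(U)_m \to \Lambda^m_j(\underline{\Sing}(U))$ is split by the retraction $\lvert \Delta^m \rvert \to \lvert \Lambda^m_j \rvert$, so horns in $\underline{\Sing}(U)$ have continuously chosen fillers. The hypothesis that each face $\partial_i K$ lies in a single element of the label of a single fat cell $D_{(i)}$ of $Y^\alpha$ lets me take the auxiliary faces $\partial_i L$ from the degeneracies of the $D_{(i)}$, which lie in $Y^\alpha$ for free because $Y^\alpha$ is closed under degeneracies. Any lower-dimensional cell that is genuinely new, but is needed to make the boundary of $L$ consistent, inherits the hypotheses of the proposition from the label properties and so is supplied by an induction on $\dim K$. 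Lemma \ref{finitelim} guarantees that every compact subspace in play is reached after finitely many stages, so the construction lands in a genuine $Y^\beta$.

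The main obstacle is arranging that $L$ has exactly one missing face. A naive degenerate filler such as $s_j K$ has two faces equal to $K$, so it is not an empty horn, and a prescribed full boundary need not be fillable in a Kan complex; hence I cannot simply declare the other faces of $L$ to be arbitrary present cells. The way through is the degenerate-$(n+1)$-cell manoeuvre already used in case (ii) of the proof of Proposition \ref{beta}: one interleaves the continuous Kan fillers coming from the splitting above with degeneracy maps so that all faces of $L$ other than $K$ become degeneracies of lower-dimensional cells that are already in $Y^\alpha$, leaving $K$ as the unique missing face. Verifying that this interleaving keeps the image of $L$ inside $\underline{\Sing}(U)$ throughout, and that it terminates after finitely many horn-fillings, is the delicate point; granting it, filling the resulting empty $(n+1)$-horn places $K$ in some $Y^\beta$ obtained from $Y^\alpha$ purely by filling empty horns, as required.
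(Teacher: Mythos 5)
There is a genuine gap at the very step your whole argument rests on: the production of a fat $(n+1)$-cell $L$ whose \emph{unique} missing face is $K$. The two tools you invoke cannot produce it. The Kan splitting of $\underline{\Sing}(U)_m \to \Lambda^m_j(\underline{\Sing}(U))$ fills a horn by creating a \emph{new} face: if you assemble a $\Lambda^{n+1}_j$-horn from $K$ and cells of $Y^\alpha$ and apply the splitting, the filler face $\partial_j L$ is a brand-new compact family of singular simplices, and nothing places it among the fat cells that have been attached to $Y^\alpha$; so $L$ then has two faces outside $Y^\alpha$ (namely $K$ and $\partial_j L$) and is not an empty horn, hence cannot be filled. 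Degeneracies fail for the reason you yourself note ($s_jK$ has two faces equal to $K$). And the proposed repair --- interleaving fillers and degeneracies so that every face of $L$ other than $K$ is a degeneracy of a cell of $Y^\alpha$ --- is impossible in general, not merely delicate: already for $n=1$, a $2$-simplex whose faces other than $\partial_kL = K$ are degenerate forces every path in $K$ to be a null-homotopic loop in $U$, which the hypotheses do not give. The existence of such an $L$, even after preparatory horn fillings, is essentially equivalent to the proposition being proved --- it asserts that $K$ is, compactly and cell-wise, homotopic rel boundary to material already present --- so it cannot be "granted"; it is the whole content.

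What your proposal never engages is the mechanism that actually carries this content, and which the paper uses: induction on the stage $\alpha$ (not on $\dim K$), imitating the proof of Proposition \ref{beta}. The hypothesis that each face of $K$ sits inside a single element of the label of a single fat cell $D_{(i)}$ is what permits the $A(K)$-style replacement of that proof: one exchanges the boundary data of $K$ for genuine fat cells of $Q(\underline{\Sing}(U)) \subseteq Y^0$, and the comparison between $K$ and its replacement is realised by simplicial homotopies, which, decomposed prism by prism, are exactly sequences of empty-horn fillings; the induction on $\alpha$, with the case analysis (i)/(ii) of Proposition \ref{beta}, is what handles how the cells $D_{(i)}$ themselves entered the construction (by horn filling or by gluing). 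In your write-up the label hypothesis is used only to harvest degeneracies, the history of $Y^\alpha$ is never consulted, and so the homotopical work is silently deferred to the unproved "interleaving" step.
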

\begin{proof}
This is proved by induction on $\alpha$; thanks to our hypotheses on the labels, everything works when we imitate the proof of Proposition \ref{beta}.
\end{proof}

\begin{prop}
$Y^\gamma = Q(\underline{\Sing}'(X))$.
\end{prop}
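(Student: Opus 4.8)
The plan is to prove, by induction on the dimension $n$, that every fat $n$-cell of $Q(\underline{\Sing}'(X))$ already lies in $Y^\gamma$; since $Y^\gamma$ is by construction a simplicial subspace of $Q(\underline{\Sing}'(X))$ consisting of a subset of the fat cells, this is exactly the asserted equality. So I fix a fat $n$-cell $K$ and assume that every fat cell of dimension less than $n$ lies in $Y^\gamma$. In particular each face of $K$, being (the image of $K$ under a face map inside) a fat $(n-1)$-cell, lies in $Y^\gamma$, so that the face hypotheses needed below are automatically met. The base case $n=0$ is the same argument with the face conditions vacuous.

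First I would treat the case where the image of $K$ in $\underline{\Sing}(X)$ lies in a single $\underline{\Sing}(U)$. After refining so that each face-image sits inside a single element of the label of a single fat cell, the preceding proposition applies and exhibits $K$ as reachable from $Y^\gamma$ purely by filling empty horns; but $Y^\gamma$ has no empty horns, so $K\in Y^\gamma$ already. For a general $K$, whose image meets several sets of the cover, I would cover $K$ by finitely many compact subspaces $K_1,\dots,K_j$, each of whose image in $\underline{\Sing}(X)$ lies in a single $\underline{\Sing}(U)$. This is the compact-open argument of Lemma \ref{colimwe}: the subsets of $K$ whose points map into some $\underline{\Sing}(U)$ form an open cover, from which a finite subcover and then a compact refinement are extracted by the Shrinking Lemma \cite[Exercise 4.36.4]{Munkres}. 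Because each $K_i$ and its faces then have image in a single $\underline{\Sing}(U)$, the fat cell $D_i$ matching $K_i$ is a single-$U$ cell; it sits in $Q(\underline{\Sing}(U))\subseteq Y^0\subseteq Y^\gamma$ (equivalently, the single-$U$ case just proved places it in $Y^\gamma$).

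It then remains to assemble the $K_i$ into $K$, and the crux is to recognise this as a \emph{valid} opportunity in the sense defined above: since $Y^\gamma$ admits no valid opportunities, it would follow that $K$ was in $Y^\gamma$ all along. With $D_i$ as above the first-order ($1$-valid) data is immediate, and the whole difficulty is the higher coherence. Here the decisive observation is that the $K_i$ are genuine sub-pieces of the \emph{single} cell $K$, so over any non-empty intersection $K_{i_1}\cap\cdots\cap K_{i_r}$ the cells $D_{i_1,\dots,i_{s-1},i_{s+1},\dots,i_r}$ all restrict to one and the same cell, namely $K$ restricted to that intersection. Consequently the boundaries that must be filled are degenerate, and the witness cells $D_{i_1,\dots,i_r}$ may be taken to be iterated degeneracies of cells already present in $Y^\gamma$; as $Y^\gamma$ is a simplicial subspace it is closed under degeneracies and hence contains them. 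This makes the opportunity $r$-valid for every $r$, so valid, forcing $K\in Y^\gamma$ and completing the induction. I expect the genuine obstacle to be exactly this validity check together with the label bookkeeping that lets the preceding proposition and the gluing construction apply uniformly to the pieces $K_i$ — in particular, confirming that the degeneracy description of the $D_{i_1,\dots,i_r}$ really meets all the $r$-validity conditions simultaneously.
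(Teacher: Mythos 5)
Your overall skeleton (induction on dimension, covering $K$ by compact pieces $K_i$ with single-$U$ images via the Shrinking Lemma, matching each $K_i$ with a cell $D_i$ already present, and concluding via a valid opportunity since $Y^\gamma$ admits none) is indeed the paper's argument. But the step you yourself flag as the crux --- the $r$-validity check --- contains a genuine error. You claim that over an intersection $K_{i_1}\cap\cdots\cap K_{i_r}$ the cells $D_{i_1,\ldots,i_{s-1},i_{s+1},\ldots,i_r}$ ``all restrict to one and the same cell'', and that the witness cells $D_{i_1,\ldots,i_r}$ can therefore be taken to be iterated degeneracies of cells of $Y^\gamma$. Neither claim holds. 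In $Q(\underline{\Sing}'(X))$, a fat $n$-cell is a compact subspace of the pullback $\partial\Delta^n(Q(\underline{\Sing}'(X))) \times_{\partial\Delta^n(\underline{\Sing}'(X))} \underline{\Sing}'(X)_n$, and $Q(\underline{\Sing}'(X))_n$ is the \emph{disjoint union} of these. The cells $D_i$ are distinct summands, different from $K$ and from each other; the restrictions of $D_i$ and $D_j$ to $K_i\cap K_j$ have the same image in $\underline{\Sing}'(X)$ and the same faces, but they are disjoint ``parallel'' subspaces of $Q(\underline{\Sing}'(X))_n$, not the same cell. The witness cell $D_{i,j}$ must therefore have two \emph{distinct} cells as two of its faces. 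A degeneracy can never do this: by the simplicial identities $d_k s_k = d_{k+1} s_k = \id$, a degenerate cell $s_k E$ has the \emph{same} cell $E$ repeated as those two faces. So closure of $Y^\gamma$ under degeneracies produces cells with the wrong boundary data, and your validity check collapses exactly at the point where the real work is needed.

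What the paper does instead: the witness cells are fat cells whose image in $\underline{\Sing}'(X)$ is degenerate but whose boundary in $Q(\underline{\Sing}'(X))$ is the prescribed one, built from the distinct cells $D_{i_1,\ldots,i_{s-1},i_{s+1},\ldots,i_r}$; such cells exist in $Q(\underline{\Sing}'(X))$, but their membership in $Y^\gamma$ is not automatic --- it is obtained by applying the preceding proposition (the horn-filling statement) to them, which in turn requires precisely the label bookkeeping (minimal-size labels, face compatibility, single-$U$ images) that you deferred. That proposition, not degeneracy-closure, is what makes the opportunity valid. Two secondary slips: your claim that $D_i$ ``sits in $Q(\underline{\Sing}(U)) \subseteq Y^0$'' is false in general, since the boundary of $D_i$ consists of cells of $Y^\gamma$ that need not come from $Q(\underline{\Sing}(U))$; only your parenthetical fallback (apply the preceding proposition) is correct. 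And in your single-$U$ case, the hypothesis that the face images lie in single label elements cannot be arranged ``after refining'': the labels of the faces were fixed during the construction of $Y^\gamma$, so this hypothesis must be verified via the cover $\{K_i\}$ and the minimality of labels, as the paper does, rather than imposed on $K$ itself.
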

\begin{proof}
Suppose not. Let $n$ be the minimal dimension of a fat cell of $Q(\underline{\Sing}'(X))$ which is not in $Y^\gamma$, and let $K$ be such a fat cell. For simplicity we can assume that every simplex of $K$ (that is, the image of $K$ under any sequence of face maps) is contained in a single fat cell, since $K$ is the disjoint union of finitely many spaces of cells of this form.

By hypothesis, all the faces $d_0(K), \ldots, d_n(K)$ of $K$ are in $Y^\gamma$, and already labelled. We can label $l(K) = \{K_1, \ldots, K_j\}$ as above, so that any face map of $K$ sends any $K_i$ into some element of the label of the face, and the image of each $K_i$ in $\underline{\Sing}'(X)$ is contained in some $\underline{\Sing}(U)$. We will show that $K$ can be added to $Y^\gamma$ by gluing cells together, giving a contradiction.

For every $K_i \in K$, the corresponding fat cell $D_i$ such that $D_i$ and $K_i$ have the same image in the pullback $\partial\Delta^n(Y^\alpha) \times_{\partial\Delta^n(Q(\underline{\Sing}'(X)))} Q(\underline{\Sing}'(X))_n$ has a single element in its label, since we have chosen the labels to have minimal size. So by the previous proposition, $D_i$ is in $Y^\gamma$. Moreover, we see inductively that for every non-empty intersection $K_{i_1} \cap \cdots \cap K_{i_r}$, every $D_{i_1, \ldots, i_{s-1}, i_{s+1}, i_r}$ has a single element in its label, and the subspaces $K_{i_1} \cap \cdots \cap K_{i_r}$ of the cells of each $D_{i_1, \ldots, i_{s-1}, i_{s+1}, i_r}$ form the boundary in $Q(\underline{\Sing}'(X))$ of a space of degenerate $(n+r-1)$-cells whose image in $\underline{\Sing}'(X)$ is contained in the image of $K_{i_1} \cap \cdots \cap K_{i_r}$. By the previous proposition again, $D_{i_1, \ldots, i_r}$ is in $Y^\gamma$. Since this holds for all non-empty $K_{i_1} \cap \cdots \cap K_{i_r}$, we have a valid opportunity to add $K$ to $Y^\gamma$ by gluing fat cells, giving a contradiction.
\end{proof}

\begin{lem}
\label{colimexact}
Colimits of sequences of closed inclusions are exact in $\U Ab$ in the regular structure. That is, given short exact sequences $A_n \rightarrowtail B_n \twoheadrightarrow C_n$ in the regular structure on $\U Ab$, and commutative diagrams
\[
\xymatrix{A_n \ar@{>->}[r] \ar@{>->}[d] & B_n \ar@{->>}[r] \ar@{>->}[d] & C_n \ar@{>->}[d] \\
A_{n+1} \ar@{>->}[r] & B_{n+1} \ar@{->>}[r] & C_{n+1},}
\]
the induced sequence $\colim A_n \to \colim B_n \colim C_n$ is exact.
\end{lem}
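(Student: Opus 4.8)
The plan is to show that the colimit sequence is again a kernel--cokernel pair in the quasi-abelian (regular) structure on $\U Ab$; since the embedding $\U Ab \to \mathcal{LH}(\U Ab)$ into the left heart is exact, this is exactly the required exactness. I would treat the two ends of the sequence separately: show that $\colim B_n \to \colim C_n$ is the cokernel of $\colim A_n \to \colim B_n$, and that $\colim A_n \to \colim B_n$ is the kernel of $\colim B_n \to \colim C_n$. Together these make the sequence a short exact sequence in the regular structure.

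The cokernel half is purely formal. Each $C_n$ is $\coker(A_n \to B_n)$, and cokernels are coequalisers, hence colimits; since colimits commute with colimits in the cocomplete category $\U Ab$, we get $\coker(\colim A_n \to \colim B_n) = \colim_n \coker(A_n \to B_n) = \colim_n C_n$, the comparison map being the colimit of the deflations $B_n \to C_n$. So $\colim B_n \to \colim C_n$ is the cokernel, and in particular a deflation. This step needs no hypothesis beyond cocompleteness.

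The kernel half is where the closed-inclusion hypothesis and Lemma \ref{finitelim} are used, and it is the main obstacle. First I would record that a sequential colimit of closed inclusions in $\U Ab$ is computed on underlying spaces: by Lemma \ref{finitelim} finite products commute with such colimits, so the group operations on the $B_n$ assemble to give operations on the $\U$-colimit $\bigcup_n B_n$, making it an abelian group object, and one checks this is the colimit in $\U Ab$ (this is the corresponding statement in \cite{Myself}). Granting this, and using that the forgetful functor $\U Ab \to \U$ creates limits so that $A_n = \ker(B_n \to C_n)$ is the equaliser of $(f_n,0): B_n \rightrightarrows C_n$ in $\U$, I apply Lemma \ref{finitelim} to this equaliser diagram: the transition maps $B_n \to B_{n+1}$ and $C_n \to C_{n+1}$ are closed inclusions by hypothesis, so $\colim_n \eq(f_n,0) = \eq(\colim_n f_n, 0) = \ker(\colim B_n \to \colim C_n)$. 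Since $\eq(f_n,0) = A_n$, this identifies $\colim A_n$ with $\ker(\colim B_n \to \colim C_n)$, the inclusion being the colimit of the closed inclusions $A_n \to B_n$.

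Combining the two halves, $\colim A_n \to \colim B_n \to \colim C_n$ is simultaneously a kernel sequence and a cokernel sequence, hence a kernel--cokernel pair, i.e. a short exact sequence in the regular structure, which is the claimed exactness. The one point requiring care is the commutation of the sequential colimit with the kernel: this is exactly what fails for general colimits or general transition maps, and it is precisely Lemma \ref{finitelim}, applicable because our transition maps are closed inclusions, that rescues it.
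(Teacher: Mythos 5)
Your proof is correct and takes essentially the same route as the paper's: both reduce the colimit in $\U Ab$ to the space-level colimit of underlying objects (via finite products commuting with sequential colimits of closed inclusions), dispose of the cokernel side formally (the paper phrases it as right exactness of the left adjoint $\colim$, you as colimits commuting with colimits), and use Lemma \ref{finitelim} with the closed-inclusion hypothesis to handle the kernel side. The only cosmetic difference is that the paper first notes the underlying sequence of abstract groups is exact and invokes Lemma \ref{finitelim} merely to see that $\colim A_n \to \colim B_n$ is a closed inclusion, whereas you apply Lemma \ref{finitelim} directly to the equaliser diagram defining the kernel, obtaining the algebraic and topological identifications in one step.
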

\begin{proof}
Given any diagram of groups $\{G_i\}$ in $\U Ab$, the colimit in $Ab$ with the colimit topology (if this is in $\U$) becomes an object in $\U Ab$. Indeed, the only non-trivial thing to check is that multiplication is continuous: $(\colim G_i) \times (\colim G_i) = \colim (G_i \times G_i) \to \colim G_i$ is continuous because finite products commute with colimits in $\U$. It follows immediately that this is the colimit in $\U Ab$.

Clearly in the current situation the colimit topologies on the $A_n$s, $B_n$s and $C_n$s are in $\U$, so the sequence $\colim A_n \to \colim B_n \colim C_n$ is an exact sequence of the underlying groups. We know $\colim$ is right exact because it is a left adjoint, so we only need to check $\colim A_n \to \colim B_n$ is a closed inclusion of spaces. Each $A_n \to B_n$ is a closed inclusion, which is an equaliser in $\U$, so the result follows because finite limits commute with colimits of sequences of closed inclusions in $\U$ by Lemma \ref{finitelim}.
\end{proof}

\begin{thm}
\label{excisionwe}
$LF(\underline{\Sing}(X))$ is weakly equivalent (in the regular structure on $s\U Ab$) to the homotopy colimit of $\{LF(\underline{\Sing}(U))\}_{U \in C'}$ (in the compact Hausdorff structure on $s\U Ab$).
\end{thm}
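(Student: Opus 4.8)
The plan is to reduce the statement to a comparison of the free abelian group objects on the two cofibrant simplicial spaces already built in Section \ref{excision}, and then to read off the weak equivalence from the explicit cellular filtration $Y^0 \to \cdots \to Y^\gamma$.

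First I would identify the two sides. Since $F$ is left Quillen, its left derived functor $LF$ preserves homotopy colimits, so by Lemma \ref{hocolim} the homotopy colimit of $\{LF(\underline{\Sing}(U))\}_{U \in C'}$ in the compact Hausdorff structure is $LF$ of the homotopy colimit of $\{\underline{\Sing}(U)\}_{U \in C'}$, namely $LF(Y^0)=F(Y^0)$, where $Y^0 = \colim_{U \in C'} Q(\underline{\Sing}(U))$ is cofibrant. For the other side, Lemma \ref{colimwe} gives a compact Hausdorff weak equivalence $\underline{\Sing}'(X) \to \underline{\Sing}(X)$; applying the left Quillen functor $F$ to the induced map $Q(\underline{\Sing}'(X)) \to Q(\underline{\Sing}(X))$ of cofibrant objects yields a compact Hausdorff weak equivalence $F(Q(\underline{\Sing}'(X))) \to F(Q(\underline{\Sing}(X))) = LF(\underline{\Sing}(X))$, which is in particular a regular weak equivalence. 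Since $Y^\gamma = Q(\underline{\Sing}'(X))$, the theorem reduces to showing that the map $F(Y^0) \to F(Y^\gamma)$ coming from the filtration is a weak equivalence in the regular structure on $s\U Ab$.

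Next I would run through the transfinite filtration $(Y^\alpha)$ one step at a time. A horn-filling step $Y^{\alpha} \to Y^{\alpha+1}$ is by definition a pushout of a generating map $\Lambda^n_k \times K \to \Delta^n \times K$ in $J$, hence a trivial cofibration in the compact Hausdorff structure; as $F$ is left Quillen it sends this to a trivial cofibration, in particular a regular weak equivalence. The content is therefore entirely in the gluing steps. For a gluing step along a valid opportunity with cover $K_1, \ldots, K_j$ of a fat $n$-cell $K$, the map $F(Y^\alpha) \to F(Y^{\alpha+1})$ is a levelwise split monomorphism (we are freely adjoining cells on compact spaces), hence a closed inclusion and a cofibration in $c(\U Ab)$, so it is a weak equivalence as soon as its cokernel complex is exact. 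That cokernel complex is generated by the newly attached cells $D, E_i, E_{i_1 i_2}, \ldots, E_{i_1 \ldots i_j}$; the $r$-validity conditions guarantee that all intersection cells $D_{i_1 \ldots i_r}$ already lie in $Y^\alpha$, so after passing to the cokernel the differentials among the new generators become exactly those of the augmented \v{C}ech complex
\[ 0 \to F(K_1 \cap \cdots \cap K_j) \to \cdots \to \bigoplus_i F(K_i) \to F(K) \to 0 \]
of the cover $\{K_i\}$ of $K$, placed in degrees $n+j, \ldots, n+1, n$, with coefficients the free modules on the intersections $K_{i_1} \cap \cdots \cap K_{i_r}$.

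The main obstacle is to prove that this \v{C}ech complex is exact in the quasi-abelian (regular) structure on $\U Ab$ -- note it cannot be exact in the compact Hausdorff structure, by Example \ref{pseudo}, which is exactly why the conclusion lands in the regular structure. The underlying algebraic fact is that the augmented \v{C}ech complex of a cover is acyclic: over each point $x \in K$ the fibre is the augmented simplicial chain complex of the simplex on the nonempty set $\{i : x \in K_i\}$, which is contractible. I would upgrade this to strictness (coimage equals kernel at each spot) in the regular structure topologically, using that the $K_{i_1} \cap \cdots \cap K_{i_r}$ are compact Hausdorff and the structure maps are closed inclusions, so the relevant limits behave well via Lemma \ref{finitelim}. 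Finally, to pass from the stepwise weak equivalences to the total map $F(Y^0) \to F(Y^\gamma) = \colim_\alpha F(Y^\alpha)$, I would use that every map in the filtration is a closed inclusion: by Lemma \ref{colimexact} the homology $LH_\ast$ commutes with these colimits, so a filtration in which each successive map is a regular weak equivalence has a regular weak equivalence as its composite. This is where the bookkeeping of closed inclusions set up in Section \ref{excision}, together with Lemma \ref{colimexact}, does the real work.
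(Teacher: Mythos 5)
Your proposal is correct and follows essentially the same route as the paper's own proof: reduce via Lemmas \ref{colimwe} and \ref{hocolim} to showing that $F(Y^0) \to F(Y^\gamma)$ is a regular weak equivalence, treat horn-filling steps as trivial cofibrations and limit stages via Lemma \ref{colimexact}, and identify the cone (equivalently, the cokernel) of each gluing step with the augmented \v{C}ech complex of the cover $\{K_i\}$, whose underlying abstract complex is acyclic by the pointwise simplex argument. The only divergence is the final strictness upgrade, where the paper cites \cite[Proposition 2.32]{LaMartin} to conclude that the maps $\coim(M_{k+1} \to M_k) \to \ker(M_k \to M_{k-1})$ are isomorphisms in $\U Ab$, rather than your sketched topological argument via compactness and Lemma \ref{finitelim}.
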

\begin{proof}
We already know that $LF(\underline{\Sing}(X))$ is weakly equivalent (in the compact Hausdorff structure) to $LF(\underline{\Sing}'(X)) = F(Q(\underline{\Sing}'(X))) = F(Y^\gamma)$. We also have that the homotopy colimit of $\{LF(\underline{\Sing}(U))\}_{U \in C'}$ is $$F(\colim_{U \in C'} Q(\underline{\Sing}(U)) = F(Y^0)$$ by Lemma \ref{hocolim}. We will show inductively that $F(Y^\alpha)$ is weakly equivalent to $F(Y^0)$, in the regular structure, for all $\alpha \leq \gamma$; the result follows.

When $\alpha$ is a limit ordinal this follows immediately from Lemma \ref{colimexact}. When $\alpha$ is a successor and $Y^{\alpha-1} \to Y^\alpha$ is a trivial cofibration, it is trivial. When $\alpha$ is a successor and $Y^{\alpha-1} \to Y^\alpha$ comes from an $n$-dimensional opportunity to glue cells together in the notation used at the beginning of the section, use the Dold-Kan correspondence of \cite[Theorem 1.2.3.7]{LurieHA} and think of the $F(Y^\alpha)$ as chain complexes in $\U Ab$. The mapping cone of $F(Y^{\alpha-1}) \to F(Y^\alpha)$ is homotopic to the complex $M$ with $$M_{n+r} = \bigoplus_{K_{i_1} \cap \cdots \cap K_{i_r} \neq \emptyset} F(K_{i_1} \cap \cdots \cap K_{i_r})$$ with maps induced by the face maps between the $E_{i_1,\ldots,i_r}$, so we just need to show this is exact in the regular structure. It is not hard to check on the elements that the homology groups of the underlying abstract chain complex are trivial; it follows from \cite[Proposition 2.32]{LaMartin} that the induced maps $\coim(M_{k+1} \to M_k) \to \ker(M_k \to M_{k-1})$ are isomorphisms in $\U Ab$, as required.
\end{proof}

\begin{rem}
Pushing out by these collections of cells, which look provocatively like finite length exact sequences of projectives, ought to be a trivial cofibration in some nice homotopical structure on $s\U$, but I do not know what that structure should be.
\end{rem}

From Theorem \ref{excisionwe} we may deduce the usual form of the Excision Theorem as in \cite[p.124]{Hatcher}, using mapping cones (i.e. homotopy cokernels) instead of quotient complexes. 

\begin{thm}[Excision Theorem]
\label{exc}
Given subspaces $A \subseteq B \subseteq X$ in $\U$ with $A$ closed and $B$ open, the inclusion $(X \setminus A, B \setminus A)֓ \to (X,B)$ induces isomorphisms of the homology group objects $H_n(X \setminus A, B \setminus A)֓ \to H_n(X,B)$ for all $n$. Equivalently, for open subspaces $A, B \subseteq X$ covering $X$, the inclusion $(B,A \cap B)֓ \to (X,A)$ induces isomorphisms of homology group objects $H_n(B,A \cap B) \to H_n(X,A)$ for all $n$.
\end{thm}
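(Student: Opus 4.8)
The plan is to deduce excision from Theorem \ref{excisionwe} in exactly the way Hatcher deduces his excision theorem from the small-simplices theorem, but replacing quotient complexes by mapping cones (homotopy cokernels) throughout, since $\U Ab$ is only quasi-abelian. Recall that relative homology is defined by $H_n(X,A) = LH_n\big(\text{cone}(LF(\underline{\Sing}(A)) \to LF(\underline{\Sing}(X)))\big)$. First I would dispense with the equivalence of the two formulations: in the first setup, with $A$ closed, $B$ open and $A \subseteq B \subseteq X$, the two sets $B$ and $X\setminus A$ are open and cover $X$ (as $A \subseteq B$ forces $X = B \cup (X\setminus A)$). Feeding this cover into the second formulation, with the roles of the two covering sets taken by $B$ and $X\setminus A$, produces the inclusion of pairs $(X\setminus A, B\setminus A) \to (X,B)$, because $B \cap (X\setminus A) = B\setminus A$. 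So it suffices to prove the cover form.

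Next, fix open $A,B$ covering $X$ and set $C = \{A,B\}$, so that $C' = \{A, B, A\cap B\}$ is the poset with least element $A\cap B$ lying below each of $A$ and $B$. Applying $LF\circ\underline{\Sing}$ and Theorem \ref{excisionwe}, $LF(\underline{\Sing}(X))$ is weakly equivalent, in the regular structure on $s\U Ab$, to the homotopy colimit over $C'$ of the span
$$LF(\underline{\Sing}(A)) \leftarrow LF(\underline{\Sing}(A\cap B)) \to LF(\underline{\Sing}(B)),$$
that is, to the homotopy pushout $P$ of this span. Crucially, the two structure maps $LF(\underline{\Sing}(A)) \to P$ and $LF(\underline{\Sing}(B)) \to P$ correspond, under the equivalence $P \simeq LF(\underline{\Sing}(X))$, to the maps induced by the inclusions $A \hookrightarrow X$ and $B \hookrightarrow X$; this is what will make the final isomorphism natural with respect to the inclusion of pairs.

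Now I would pass to the derived category. Via the Dold--Kan equivalence $s\U Ab \cong c(\U Ab)$ and Schneiders' theory \cite{Schneiders}, the functors $LH_n$ are computed in the triangulated derived category $D(\U Ab) \cong D(\mathcal{LH}(\U Ab))$; since compact Hausdorff weak equivalences are regular weak equivalences, the homotopy pushout square descends to a bicartesian square there. In this stable setting a homotopy pushout square is simultaneously a homotopy pullback square, so the induced map on parallel (horizontal) cofibres is an isomorphism in $D(\U Ab)$:
$$\text{cone}\big(LF(\underline{\Sing}(A\cap B)) \to LF(\underline{\Sing}(B))\big) \xrightarrow{\ \sim\ } \text{cone}\big(LF(\underline{\Sing}(A)) \to LF(\underline{\Sing}(X))\big).$$
These two cones are precisely the relative complexes computing $H_\ast(B, A\cap B)$ and $H_\ast(X,A)$, and by the compatibility noted above the equivalence is induced by the inclusion of pairs $(B, A\cap B) \to (X,A)$. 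Applying $LH_n$ then yields the asserted isomorphisms $H_n(B, A\cap B) \to H_n(X,A)$ for all $n$, induced by inclusion.

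I expect the one genuine obstacle to be the bookkeeping around the two homotopical structures. Theorem \ref{excisionwe} delivers the homotopy colimit in the compact Hausdorff structure, while the weak equivalence to $LF(\underline{\Sing}(X))$ holds only in the regular structure, so I must verify that the double-mapping-cone model of the pushout genuinely presents a bicartesian square in the regular derived category, and that the implication ``homotopy pushout $\Rightarrow$ homotopy pullback'' is legitimately available. This is where the quasi-abelian (rather than abelian) character of $\U Ab$ must be taken seriously: the triangulated structure and the cofibre-cancellation step should be justified through Schneiders' identification $D(\U Ab) \cong D(\mathcal{LH}(\U Ab))$ together with the exactness of the relevant colimits of closed inclusions established in Lemma \ref{colimexact}, rather than invoked as in the classical abelian case.
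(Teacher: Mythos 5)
Your proposal is correct and takes essentially the same route as the paper's proof: both reduce to the cover formulation, apply Theorem \ref{excisionwe} to the cover $\{A,B\}$, and deduce excision by comparing the relative complexes of the two parallel maps in the resulting (homotopy) pushout square. The differences are only in packaging --- the paper uses the concrete pushout $F \circ Q(\underline{\Sing}(A+B))$, strict cokernels and the long exact sequence where you use mapping cones and cofibre cancellation for bicartesian squares in the derived category --- together with one citation slip on your side: the fact you flag as needing verification (that the square yields a short exact sequence, hence a distinguished triangle, in the regular structure) follows from the quasi-abelian pushout axiom applied to the levelwise split monomorphism $LF(\underline{\Sing}(A \cap B)) \to LF(\underline{\Sing}(B))$, exactly as in the paper's proof of Theorem \ref{MV}, rather than from Lemma \ref{colimexact}.
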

\begin{proof}
We prove the second formulation. Write $Q(\underline{\Sing}(A+B))$ for the pushout of $Q(\underline{\Sing}(A)) \leftarrow Q(\underline{\Sing}(A \cap B)) \to Q(\underline{\Sing}(B))$. The square
\[
\xymatrix{LF(\underline{\Sing}(A \cap B)) \ar[r] \ar[d] & LF(\underline{\Sing}(B)) \ar[d] \\
LF(\underline{\Sing}(A)) \ar[r] & F \circ Q(\underline{\Sing}(A+B))}
\]
is a pushout, so the cokernels of the two rows are isomorphic, so the maps induced on their homology group objects are isomorphisms. By Theorem \ref{excisionwe}, the canonical map $F \circ Q(\underline{\Sing}(A+B)) \to LF(\underline{\Sing}(X))$ is a weak equivalence in the regular structure, so the maps induced on their homology group objects are isomorphisms too. Now we can use the long exact sequence in homology to see that the maps of homology group objects induced by
\begin{align*}
& \coker(LF(\underline{\Sing}(A)) \to F \circ Q(\underline{\Sing}(A+B))) \\
\to & \coker(LF(\underline{\Sing}(A)) \to LF(\underline{\Sing}(X)))
\end{align*}
are isomorphisms, and the result follows.
\end{proof}

Much of the rest of \cite[Chapter 2]{Hatcher} can be carried across to our topological homology theory fairly painlessly from this point; we give a few highlights.

\begin{thm}[Mayer--Vietoris sequence]
\label{MV}
For open subspaces $A, B \subseteq X$ covering $X$ there is a long exact sequence of homology group objects $$\cdots \to H_{n+1}(X) \to H_n(A \cap B) \to H_n(A) \oplus H_n(B) \to H_n(X) \to \cdots.$$
\end{thm}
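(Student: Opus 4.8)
The plan is to reduce the statement to the short exact sequence underlying the homotopy pushout square already produced in the proof of Theorem~\ref{exc}, and then to apply the family of functors $LH_\ast$. Recall from that proof that, writing $Q(\underline{\Sing}(A+B))$ for the pushout of $Q(\underline{\Sing}(A)) \leftarrow Q(\underline{\Sing}(A\cap B)) \to Q(\underline{\Sing}(B))$, the square
\[
\xymatrix{LF(\underline{\Sing}(A \cap B)) \ar[r] \ar[d] & LF(\underline{\Sing}(B)) \ar[d] \\
LF(\underline{\Sing}(A)) \ar[r] & F \circ Q(\underline{\Sing}(A+B))}
\]
is a pushout in $c(\U Ab)$. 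Set $P = LF(\underline{\Sing}(A\cap B))$, $R_A = LF(\underline{\Sing}(A))$, $R_B = LF(\underline{\Sing}(B))$ and $S = F \circ Q(\underline{\Sing}(A+B))$, all regarded as chain complexes via the Dold--Kan equivalence.

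First I would turn this pushout into a short exact sequence of chain complexes in the regular (quasi-abelian) structure. As in the proof of Theorem~\ref{exc}, $Q(\underline{\Sing}(A\cap B))$ is a simplicial subspace of both $Q(\underline{\Sing}(A))$ and $Q(\underline{\Sing}(B))$ consisting of a subset of the fat cells, so both inclusions are cofibrations in $s\U$. Since $F$ is a left adjoint it preserves cofibrations and colimits, so the maps $P \to R_A$ and $P \to R_B$ become levelwise split monomorphisms in $c(\U Ab)$ and the square remains a pushout after applying $F$. Hence the map $(i_A,-i_B): P \to R_A \oplus R_B$ is a levelwise split monomorphism whose cokernel is the pushout $S$, and a levelwise split monomorphism is a kernel in $\U Ab$; therefore
\[
0 \to P \xrightarrow{(i_A,-i_B)} R_A \oplus R_B \xrightarrow{(p_A,p_B)} S \to 0
\]
is a short exact sequence of chain complexes in the quasi-abelian structure on $c(\U Ab)$.

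Applying $LH_\ast$ — which, since the embedding into the left heart is exact, sends such a short exact sequence to a long exact sequence in the abelian category $\mathcal{LH}(\U Ab)$ — yields
\[
\cdots \to LH_{n+1}(S) \to LH_n(P) \to LH_n(R_A) \oplus LH_n(R_B) \to LH_n(S) \to \cdots,
\]
where I have used additivity of $LH_n$ to split the middle term. By definition $LH_n(P) = H_n(A\cap B)$, $LH_n(R_A) = H_n(A)$ and $LH_n(R_B) = H_n(B)$. It remains to identify $LH_n(S)$ with $H_n(X)$: by Theorem~\ref{excisionwe} the canonical map $S = F \circ Q(\underline{\Sing}(A+B)) \to LF(\underline{\Sing}(X))$ is a weak equivalence in the regular structure, and $LH_n$ is invariant under regular weak equivalences, so $LH_n(S) \cong LH_n(LF(\underline{\Sing}(X))) = H_n(X)$. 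Substituting produces exactly the claimed Mayer--Vietoris sequence.

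I expect the main obstacle to be the second step: confirming that applying $F$ to the fat-cell inclusions yields genuine inflations (kernels) in the quasi-abelian structure, so that the displayed sequence is short exact there and not merely in the compact Hausdorff exact structure. Everything downstream — the long exact sequence and the identification of $LH_n(S)$ with $H_n(X)$ via excision — is then formal, given Theorem~\ref{excisionwe} and the exactness of the embedding $\U Ab \to \mathcal{LH}(\U Ab)$.
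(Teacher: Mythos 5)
Your proposal is correct and takes essentially the same route as the paper: both extract the short exact sequence $0 \to LF(\underline{\Sing}(A \cap B)) \to LF(\underline{\Sing}(A)) \oplus LF(\underline{\Sing}(B)) \to F \circ Q(\underline{\Sing}(A+B)) \to 0$ in the regular structure from the pushout square of Theorem \ref{exc}, take the resulting long exact sequence in the left heart, and identify the last term with $H_n(X)$ via Theorem \ref{excisionwe}. The only difference is in how the short exactness is justified --- the paper cites the standard quasi-abelian fact that a bicartesian square yields such a sequence, while you verify directly that the map into the direct sum is a levelwise split monomorphism (hence a kernel) with cokernel the pushout, which is a sound and somewhat more self-contained version of the same step.
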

\begin{proof}
Think of the rows of the commutative square in the proof of Theorem \ref{exc} as a chain complex of chain complexes; the vertical maps give a map between these chain complexes of chain complexes. This square is a pushout and a pullback; it is a standard result of commutative alebra in quasi-abelian categories that the mapping cone of this square is then a short exact sequence of chain complexes $$0 \to LF(\underline{\Sing}(A \cap B)) \to LF(\underline{\Sing}(A)) \oplus LF(\underline{\Sing}(B)) \to F \circ Q(\underline{\Sing}(A+B)) \to 0$$ in the regular structure. Taking homology and applying Theorem \ref{excisionwe} gives the result.
\end{proof}

In constructing a homology theory on $\U$, we would like certain axioms to be satisfied: those of a generalised homology theory. These axioms are usually listed for homology theories from spaces to abelian groups, but they make sense in our context. A generalised homology theory is here taken to be a functor $E$ from pairs $(X,Y)$ of spaces $Y \subseteq X$ in $\U$ to chain complexes in $R$-$\U Mod$, with the regular structure, satisfying:
\begin{enumerate}[(i)]
\item homotopy invariance: homotopies in $\U$ induce homotopies in $R$-$\U Mod$;
\item exactness: associated naturally to a pair $(X,Y)$ is an exact triangle $E(Y,\emptyset) \to E(X,\emptyset) \to E(X,Y) \to$;
\item additivity: if $(X,A)$ is a disjoint union of pairs $(\bigsqcup X_i, \bigsqcup A_i)$, then the canonical map $\bigoplus E(X_i,A_i) \to E(X,A)$ is a weak equivalence;
\item dimension: $E(\ast,\emptyset)$ is exact in non-zero dimensions;
\item excision: for $A \subseteq B \subseteq X$ with $A$ closed and $B$ open, the canonical map $E(X \setminus A, B \setminus A) \to E(X,A)$ is a weak equivalence.
\end{enumerate}

Given our definitions, the excision axiom was the only non-trivial thing to check. So we have:

\begin{thm}
\label{generalised}
Singular homology $LF \circ \underline{\Sing}$ is a generalised homology theory.
\end{thm}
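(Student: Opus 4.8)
The plan is to verify the five axioms one at a time, taking advantage of the fact that the only substantive content, excision, has already been established in Theorem \ref{exc}. The remaining four axioms should all fall out formally from the construction of $E = LF \circ \underline{\Sing}$ as a left derived functor composed with $\underline{\Sing}$, together with the definition of relative homology as a mapping cone. So the strategy is to isolate excision as the appeal to Theorem \ref{exc}, and dispatch the rest by unwinding definitions.

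For excision (v) I would simply cite Theorem \ref{exc}: a map of chain complexes in $R$-$\U Mod$ is by definition a weak equivalence in the regular structure precisely when it induces isomorphisms $LH_n \to LH_n$ for every $n$, i.e.\ isomorphisms of all the homology group objects $H_n$, which is exactly the conclusion of Theorem \ref{exc} for the excision map onto the pair $(X,B)$. For exactness (ii) I would take $E(X,Y)$ to be the mapping cone (homotopy cokernel) of $LF(\underline{\Sing}(Y)) \to LF(\underline{\Sing}(X))$, exactly as in the proof of Theorem \ref{exc}; the defining triangle of a mapping cone is then exact in the regular structure, yielding the required triangle $E(Y,\emptyset) \to E(X,\emptyset) \to E(X,Y) \to$ with no further work. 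For homotopy invariance (i) I would use that $\underline{\Sing}$ is a right adjoint and hence preserves products, so $\underline{\Sing}(X \times [0,1]) \cong \underline{\Sing}(X) \times \underline{\Sing}([0,1])$; the inclusion $\Delta^1 \to \underline{\Sing}([0,1])$ converts a topological homotopy $X \times [0,1] \to Y$ into a simplicial homotopy between $\underline{\Sing}(f)$ and $\underline{\Sing}(g)$, and since $LF$ descends to the homotopy category it carries this to a homotopy in $c(R\text{-}\U Mod)$. For additivity (iii) the key point is that $\Delta^n$ is connected, so every singular simplex of a disjoint union factors through a single summand; thus $\underline{\Sing}(\bigsqcup_i X_i) = \bigsqcup_i \underline{\Sing}(X_i)$ levelwise, and because $LF$ is a left derived functor it preserves coproducts, which in $c(R\text{-}\U Mod)$ are direct sums, giving the weak equivalence $\bigoplus_i E(X_i,A_i) \to E(X,A)$. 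For dimension (iv), $\underline{\Sing}(\ast)$ is the terminal (constant) simplicial space, so a cofibrant replacement computes the homology of a point, namely $R$ in degree zero and trivial in all other degrees; hence $E(\ast,\emptyset)$ is exact in every nonzero dimension.

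I do not expect a genuine obstacle here, since the one hard axiom, excision, is already in hand. The only matters requiring care are bookkeeping ones: matching the phrase \emph{isomorphisms of homology group objects for all $n$} in Theorem \ref{exc} with \emph{weak equivalence in the regular structure}, and confirming that homotopy invariance and additivity survive the passage to the derived functor $LF$ and the compactly generated topologies. Both are routine, resting only on the facts that $LF$ preserves weak equivalences and coproducts and that $\underline{\Sing}$ preserves products and carries disjoint unions to coproducts.
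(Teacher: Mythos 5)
Your proposal is correct and matches the paper's own treatment: the paper simply observes that, given its definitions (relative homology via mapping cones, weak equivalence in the regular structure meaning isomorphisms of the $LH_n$ homology group objects), excision is the only non-trivial axiom, and it follows from Theorem \ref{exc}. Your verification of the remaining four axioms just fills in the routine details the paper leaves implicit.
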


\section{Eilenberg--Mac Lane Spaces}
\label{EM}

Now we will use Theorem \ref{singhoco} to construct certain Eilenberg--Mac Lane spaces for topological groups.

\begin{lem}
For all $n$, $\underline{\Sing} \circ \lvert \Delta^n \rvert$ is weakly equivalent to $\Delta^n$ in the compact Hausdorff structure.
\end{lem}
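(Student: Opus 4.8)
The plan is to show that the unit of the adjunction $\eta\colon \Delta^n \to \underline{\Sing}\lvert\Delta^n\rvert$ is a weak equivalence in the compact Hausdorff structure, by comparing both objects with the point $\Delta^0$ and invoking two-out-of-three. Throughout, $\Delta^n$ denotes the standard simplex regarded as a levelwise-discrete object of $s\U$, so that $\lvert\Delta^n\rvert$ is the usual topological $n$-simplex.

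First I would establish a natural isomorphism of simplicial sets $\underline{s\U}_{sSet}(\disc K, \underline{\Sing}(W)) \cong \Sing(\underline{\U}(K,W))$ for every compact Hausdorff $K$ and $W \in \U$, where $\Sing$ is the ordinary singular simplicial set functor. This comes from unwinding the $sSet$-tensor: in degree $m$ the left-hand side is $s\U(\disc K \otimes \Delta^m, \underline{\Sing}(W))$, which by the $\lvert-\rvert \dashv \underline{\Sing}$ adjunction equals $\U(\lvert \disc K \otimes \Delta^m\rvert, W)$; using the realisation compatibilities $\lvert\disc K\rvert = K$ and $\lvert B \otimes S\rvert \cong \lvert B\rvert \times \lvert S\rvert$, this becomes $\U(K \times \lvert\Delta^m\rvert, W)$, which by cartesian closedness is $\U(\lvert\Delta^m\rvert, \underline{\U}(K,W)) = \Sing(\underline{\U}(K,W))_m$.

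Next, for the right-hand object: the collapse $\lvert\Delta^n\rvert \to \ast$ is a topological homotopy equivalence since $\lvert\Delta^n\rvert$ is convex, so the mapping space $\underline{\U}(K,\lvert\Delta^n\rvert)$ is convex, hence contractible, and $\underline{\U}(K,\lvert\Delta^n\rvert) \to \ast$ is a homotopy equivalence in $\U$. Applying the isomorphism of the previous step, $\underline{s\U}_{sSet}(\disc K, \underline{\Sing}\lvert\Delta^n\rvert) \to \underline{s\U}_{sSet}(\disc K, \Delta^0)$ is $\Sing$ of a homotopy equivalence, hence a weak equivalence of simplicial sets for every $K$; thus $\underline{\Sing}\lvert\Delta^n\rvert \to \Delta^0$ is a weak equivalence in the compact Hausdorff structure. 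For the left-hand object, $\Delta^n \to \Delta^0$ is a simplicial homotopy equivalence (a vertex inclusion is a homotopy inverse), and since $\underline{s\U}_{sSet}(\disc K,-)$ is an $sSet$-enriched functor it preserves simplicial homotopy equivalences, so $\Delta^n \to \Delta^0$ is a weak equivalence in the compact Hausdorff structure as well.

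Finally, naturality of $\eta$ produces a commuting triangle $\Delta^n \xrightarrow{\eta} \underline{\Sing}\lvert\Delta^n\rvert \to \underline{\Sing}\lvert\Delta^0\rvert = \Delta^0$ whose composite is the canonical map $\Delta^n \to \Delta^0$; since the two maps to $\Delta^0$ are weak equivalences in the compact Hausdorff structure, two-out-of-three forces $\eta$ to be one too. The main obstacle I anticipate is step one: pinning down the enriched-hom computation cleanly, in particular the compatibilities between geometric realisation and the $sSet$-tensor and the verification that the realisations from $sSet$ and from $s\U$ agree on $\Delta^n$. Once those identifications are in hand the homotopy-theoretic conclusions are purely formal.
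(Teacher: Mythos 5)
Your proposal is correct and takes essentially the same route as the paper's own (much terser) proof: both arguments compare the two objects with the point, using that $\lvert \Delta^n \rvert$ is contractible, hence $\underline{\Sing}\lvert \Delta^n \rvert$ is weakly equivalent to $\Delta^0$, and that $\Delta^n \to \Delta^0$ is a weak equivalence in the compact Hausdorff structure. You merely make explicit the enriched-hom computation $\underline{s\U}_{sSet}(\disc K, \underline{\Sing}(W)) \cong \Sing(\underline{\U}(K,W))$ and the two-out-of-three step that the paper leaves implicit.
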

\begin{proof}
Any map $\Delta^0 \to \Delta^n$ is a trivial cofibration; $\lvert \Delta^n \rvert$ is homotopy equivalent to a point, so $\underline{\Sing} \circ \lvert \Delta^n \rvert$ is homotopy equivalent to $\Delta^0$.
\end{proof}

Suppose $X \in s\U$. As usual, each $Q(X)_n$ comes with a canonical decomposition as a disjoint union of compact fat cells: write $S$ for the set of fat cells. For each fat $n$-cell $K \in S$, consider the diagram $D_K$ in $s\U$ given by objects $K \times \Delta^m$ indexed by the $m$-cells of $\Delta^n$ and maps indexed by the face and degeneracy maps of $\Delta^n$. Write $Q(X)$ as the colimit of a sequence $(Y^\alpha)$ of pushouts $Y^\alpha \leftarrow K \times \partial\Delta^n \to K \times \Delta^n$ attaching fat cells in order of dimension: these attaching maps induce maps from the copies of $K \times \Delta^m$ in $D_K$ to copies of $K' \times \Delta^m$ in $D_{K'}$ wherever an $m$-face of $K$ is attached to an $m$-face of $K'$. The set of diagrams $D_K$, $K \in S$, together with these maps between objects of the diagrams, give a bigger diagram $D$.

\begin{prop}
$Q(X)$ is the homotopy colimit of $D$.
\end{prop}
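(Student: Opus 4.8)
The plan is to compute $\hocolim D$ as the left derived functor $L\colim$ of the colimit functor $\colim\colon s\U^\E\to s\U$, where $\E$ is the indexing category of $D$. As recalled in Section \ref{model}, the projective model structure on $s\U^\E$ exists and is class-cofibrantly generated, and $L\colim D$ is obtained by applying $\colim$ to a cofibrant replacement of $D$ in that structure. Hence the proposition reduces to two claims: that $\colim_\E D = Q(X)$, and that $D$ is already cofibrant in the projective structure, so that no replacement is needed and $\hocolim D = \colim_\E D = Q(X)$.

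The identification $\colim_\E D = Q(X)$ is a levelwise computation. The simplex category indexing each $D_K$ (for $K$ a fat $n$-cell) has the top simplex as a terminal object, so $\colim D_K = K\times\Delta^n$; the gluing maps then identify each boundary face $K\times\Delta^m$ of a fat cell with its image in the cell to which it is attached. Taking the colimit over all of $\E$ therefore glues the pieces $K\times\Delta^n$ along their boundaries in exactly the pattern of the iterated pushouts $(Y^\alpha)$, which is the construction of $Q(X)$; equivalently, one checks in each simplicial degree $p$ that $(\colim_\E D)_p = \colim_\E(\coprod_{\Delta^{|\sigma|}_p}K)$ reassembles $Q(X)_p$.

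The substance of the proof is the cofibrancy of $D$, which is the fat-cell analogue of the standard fact that the tautological diagram of simplices of a simplicial set is projectively cofibrant. I would exhibit $D$ as a transfinite composite of pushouts of generating projective cofibrations $F_e(\iota)$, where $\iota\colon\partial\Delta^n\times K\to\Delta^n\times K$ is a generating cofibration of $s\U$ (in the class $I$), $F_e$ is the left adjoint to evaluation at the index $e$, and $e$ ranges over the top objects $(K,\id_{[n]})$ of the fat cells, filtered by the same ordinals as $(Y^\alpha)$. At the stage attaching the fat $n$-cell $K$, the attaching datum is a map $\partial\Delta^n\times K\to D^{\le\alpha}(e)$ recording how the boundary of the top cell lands in the part of $D^{\le\alpha}$ assembled from the faces of $K$ (which are lower-dimensional, hence already present); the free pushout then adds the top cell together with its degeneracies. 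Since generating projective cofibrations are closed under pushout and transfinite composition, $D$ is cofibrant.

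The main obstacle I anticipate is the combinatorial bookkeeping in this last step: one must verify precisely that attaching a fat cell corresponds to a single free pushout, keeping track of how the fresh boundary objects $(K,\sigma)$ of $D_K$ and the gluing maps into $D^{\le\alpha}$ fit together, that the degenerate faces of the top cell are supplied freely by $F_e$, and that the proper boundary faces are genuinely already present from earlier, lower-dimensional stages. The fattening by $K$ itself is harmless, since $\partial\Delta^n\times K\to\Delta^n\times K$ is by definition a generating cofibration in the compact Hausdorff structure; one must only ensure that the colimits and pushouts involved are well behaved in $\U$, for which the cartesian closedness of $\U$ and the commutation of finite limits with suitable colimits (Lemma \ref{finitelim}) are available.
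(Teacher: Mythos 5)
Your first step---identifying $\colim_\E D$ with $Q(X)$ using the fact that the top cell is terminal in each $D_K$---is correct and agrees with the paper. The second step, however, contains a genuine error: $D$ is \emph{not} projectively cofibrant, and your proposed cell structure does not build $D$. The obstruction is the degeneracy maps that the paper explicitly includes in the indexing category ($D_K$ has ``maps indexed by the face and degeneracy maps of $\Delta^n$''). A pushout by a generating projective cofibration $F_e(\iota)$ adjoins, at each index $d$, one free copy of the new cells for each morphism in $\Hom_\E(e,d)$, and the structure maps of the resulting diagram permute these free copies. By induction over a cell complex, together with a retract argument, every projectively cofibrant object of $s\U^\E$ therefore has the property that its structure maps carry nondegenerate points (points not in the image of any degeneracy of the simplicial space) to nondegenerate points. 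The diagram $D$ violates this property: the morphism of $\E$ from the cell indexed by $\sigma s^j$ to the cell indexed by $\sigma$ is sent by $D$ to the collapse map $\id_K \times (s^j)_\ast \colon K \times \Delta^{m+1} \to K \times \Delta^m$, which maps the nondegenerate top cells onto degenerate ones. The smallest example already defeats your construction: if $Q(X)$ is a single fat $0$-cell $K$, then $\E \cong \Delta$ and $D$ is the diagram $[m] \mapsto K \times \Delta^m$, while your single attachment $F_e(\emptyset \to \disc K)$ at $e = (K, \id_{[0]})$ produces at $[m]$ a disjoint union of $m+1$ copies of $\disc(K)$ --- the levelwise $0$-skeleton, not $K \times \Delta^m$. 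Your remark that ``the degenerate faces of the top cell are supplied freely by $F_e$'' conflates the degeneracies internal to the simplicial space at the index $e$ (which the pushout does supply) with the degeneracy-indexed \emph{objects} of $\E$ and the collapse \emph{morphisms} between them, which no free cell can produce. Relatedly, the ``standard fact'' you invoke is not a fact: the tautological diagram of simplices of a simplicial set, with degeneracies included, is not projectively cofibrant in general; what is standard is the weaker theorem that the canonical map from its homotopy colimit to its colimit is a weak equivalence, and that requires a real argument.

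The paper supplies exactly the argument your route is missing, without ever claiming cofibrancy of $D$. Having identified the colimit, it filters $Q(X)$ by skeleta: each $Y^{\alpha_n}$ is the pushout of $Y^{\alpha_{n-1}} \leftarrow \bigsqcup_\alpha K_\alpha \times \partial\Delta^n \to \bigsqcup_\alpha K_\alpha \times \Delta^n$, in which all three objects are cofibrant and the right-hand map is a cofibration, hence a homotopy pushout by \cite[Proposition A.2.4.4]{LurieHTT}; then $Q(X)$ is the sequential colimit of the cofibrations $Y^{\alpha_{n-1}} \to Y^{\alpha_n}$ between cofibrant objects, hence a homotopy colimit by \cite[Example 11.5.11]{Riehl}. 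If you wanted to salvage your strategy you would have to discard the degeneracies and index over a direct category of nondegenerate cells, but that changes the diagram $D$ whose homotopy colimit the proposition (and its later use in Theorem \ref{tpd}) actually concerns; as written, the cofibrancy claim is false and the skeletal decomposition is needed in its place.
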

\begin{proof}
The colimit of $D$ is the colimit of the $Y^\alpha$, since we can get each $K \times \partial\Delta^n$ as the colimit (and in fact the homotopy colimit) of all the objects of $D_K$ except $K \times \Delta^n$ itself, and then the maps between the $D_{K'}$s induce the map $Y^\alpha \leftarrow K \times \partial\Delta^n$ from the pushout. And we know the colimit of the $Y^\alpha$ is $Q(X)$.

Since we are adding fat cells in order of dimension, for each $n$ there is some $\alpha_n$ such that $Y^{\alpha_n}$ is the $n$-skeleton of $Y$. Then $Y^{\alpha_n}$ is the homotopy colimit of the pushout $$Y^{\alpha_{n-1}} \leftarrow \bigsqcup_{\alpha_{n-1} \leq \alpha < \alpha_n} K_\alpha \times \partial\Delta^n \to \bigsqcup_{\alpha_{n-1} \leq \alpha < \alpha_n} K_\alpha \times \Delta^n$$ because all three objects are cofibrant and the second map is a cofibration, by \cite[Proposition A.2.4.4]{LurieHTT}. Then $Y$ is the colimit of a sequence $(Y^{\alpha_n})$ of cofibrant objects with cofibrations between them, so it is the homotopy colimit by \cite[Example 11.5.11]{Riehl}. It follows that colimit $Q(X)$ is the homotopy colimit of $D$.
\end{proof}

Since we can change a diagram by a weak equivalence without changing the homotopy colimit, we can change $D$ to a diagram $D'$ of the same shape where we replace each $K \times \Delta^m$ with $\disc(K) = K \times \Delta^0$; the maps in $D'$ are the obvious ones induced by those of $D$. We conclude that $Q(X)$ is the homotopy colimit of $D'$.

We denote the points of $D^n \times X_n$ by the coordinates $(r,\theta,x)$, where $(r,\theta)$ parametrises the closed unit ball in spherical coordinates and $x \in X_n$. For each $n$, we have a map $\Phi^n$ from $D^n \times X_n$ to the $n$-skeleton $\lvert X \rvert^n$ of $\lvert X \rvert$ which restricts to a homeomorphism from $(D^n \setminus S^n) \times X_n$ to $\lvert X \rvert^n \setminus \lvert X \rvert^{n-1}$, in the same way as for CW-complexes.

\begin{thm}
\label{tpd}
Suppose $X \in s\U$ with $X_n$ totally path-disconnected for all $n$. Then $\underline{\Sing} \circ L\lvert X \rvert$ is weakly equivalent to $X$ in the regular structure.
\end{thm}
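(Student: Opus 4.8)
The plan is to compute $\underline{\Sing}(L\lvert X \rvert)$ by means of the continuous Seifert--van Kampen theorem, Theorem \ref{singhoco}, applied to an open cover of $L\lvert X \rvert = \lvert Q(X) \rvert$ adapted to its cell structure, and then to identify the resulting homotopy colimit with $Q(X)$. Since $X \to Q(X)$ is a weak equivalence in the compact Hausdorff structure, and compact Hausdorff weak equivalences are regular weak equivalences, it suffices to produce a regular weak equivalence $\underline{\Sing}(L\lvert X \rvert) \simeq Q(X)$. The essential input from the hypothesis is the following collapse: if $K$ is a fat cell, then $K$ is a compact subspace of some $X_n$ and hence totally path-disconnected, so every map $\lvert \Delta^m \rvert \to K$ is constant and $\underline{\Sing}(K) = \disc(K)$. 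Thus $\underline{\Sing}$ turns each fat cell into exactly the object appearing in the diagram $D'$ whose homotopy colimit is $Q(X)$.

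First I would record the cell structure of $L\lvert X \rvert$. Because $\lvert - \rvert$ is a left Quillen functor and $Q(X)$ is the homotopy colimit of the diagram $D'$ (by the preceding proposition), $L\lvert X \rvert = \lvert Q(X) \rvert$ is the homotopy colimit of $\lvert D' \rvert$, whose objects are the fat cells $K$ regarded as spaces; concretely, via the maps $\Phi^n$, the open $n$-cells of $L\lvert X \rvert$ are the images of $(D^n \setminus S^n) \times K$ for the fat $n$-cells $K$.

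I would then build an open cover $C$ of $L\lvert X \rvert$ by thickened open cells --- open stars of the fat cells --- arranged so that (a) each $U \in C$, and more generally each finite intersection, deformation retracts onto its underlying fat cell $K$, and (b) the poset $C'$ of finite intersections reproduces the shape of the diagram $D'$, with the inclusion maps realising the face and attaching maps. Applying Theorem \ref{singhoco} to this cover yields a regular weak equivalence $\underline{\Sing}(L\lvert X \rvert) \simeq \hocolim_{U \in C'} \underline{\Sing}(U)$, the homotopy colimit being taken in the compact Hausdorff structure. Since $\underline{\Sing}$ preserves the homotopy equivalences of (a), we get $\underline{\Sing}(U) \simeq \underline{\Sing}(K) = \disc(K)$ by the collapse above. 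Hence the diagram $\{\underline{\Sing}(U)\}_{U \in C'}$ is objectwise weakly equivalent to $D'$, their compact Hausdorff homotopy colimits agree up to weak equivalence, and $\hocolim_{U \in C'} \underline{\Sing}(U) \simeq \hocolim D' = Q(X) \simeq X$, as required.

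The main obstacle is step (b): constructing the cover so that its intersection poset $C'$ matches the shape of $D'$ and the induced maps agree, all while keeping the deformation retractions of (a) mutually compatible across the diagram. The fat cells are thick in the $X_n$-directions, so the open stars must be chosen to respect this thickening simultaneously in every simplex, and one must check that an intersection of stars retracts onto the common fat face rather than onto something larger; this is where the bookkeeping, organised by the fat-cell decomposition and the maps $\Phi^n$, is delicate. By contrast, the homotopical collapse $\underline{\Sing}(K) = \disc(K)$ and the commutation of $\underline{\Sing}$ past the homotopy colimit --- supplied entirely by Theorem \ref{singhoco} --- are the parts that go through cleanly.
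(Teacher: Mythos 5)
Your proposal is correct and follows essentially the same route as the paper: its proof applies Theorem \ref{singhoco} to an open cover of $\lvert Q(X) \rvert$ built inductively on skeleta by thickening each fat cell (precisely your ``open stars''), uses total path-disconnectedness to collapse $\underline{\Sing}$ of each intersection to $\disc(K)$, and identifies the resulting diagram with $D'$, whose homotopy colimit is $Q(X) \simeq X$. The only detail you omit is the paper's preliminary replacement of $X$ by $\Ex(X)$, arranging that the faces of each fat cell are pairwise disjoint, which is what makes your step (b) --- matching the intersection poset of the cover to the shape of $D'$ --- go through cleanly.
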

\begin{proof}
By replacing $X$ with $\Ex(X)$ (which gives a weak equivalence in the compact Hausdorff structure), if necessary, we may assume for simplicity that for any fat cell in $Q(X)$ the images of its face maps are pairwise disjoint.

$Q(X)$ is weakly equivalent to $X$ in the compact Hausdorff structure, and if the $X_n$ are totally path-disconnected, so are the $Q(X)_n$.

$\lvert Q(X) \rvert$ is then a KW-complex, as defined in \cite[Section 2]{Myself}, and we construct an open cover inductively on the skeleta: on the $0$-skeleton we take an open cover whose open sets are the fat cells of $Q(X)_0$. Suppose we have an open cover $C_{n-1}$ of the $(n-1)$-skeleton. For each fat cell $K$ of $Q(X)_n$, we have the open space $U_K$ of open $n$-cells $K \times (D^n \setminus S^n)$. Also, for every open $U \in C_{n-1}$, think of $(\Phi^n)^{-1}(U)$ as an open subset of $S^n \times X_n$; in spherical coordinates as before, define $$U' = (1 - \delta,1] \times (\Phi^n)^{-1}(U) \cup U$$ for some small $\delta > 0$. Take $C_n$ to be the the open cover of the $n$-skeleton given by the $U_K$s and the $U'$s. Taking limits over $n$, we get a cover $C$ of $\lvert Q(X) \rvert$.

Let $C'$ be the set of finite intersections of open sets in $C$. We can see directly that any non-empty element of $C'$ is homotopic to some fat cell $K$ of $Q(X)$. Since $K$ is totally path-disconnected, all singular maps into $K$ are constant and $\underline{\Sing}(K)$ is just the constant simplicial space on $K$. By Theorem \ref{singhoco}, $\underline{\Sing} \circ L\lvert X \rvert$ is weakly equivalent (in the regular structure on $s\U$) to the homotopy colimit of $\{\underline{\Sing}(U)\}_{U \in C'}$, which is the homotopy colimit of $\{\disc(K)\}_{U \in C'}$. But this diagram is easily seen to be isomorphic to $D'$, so its homotopy colimit is $X$.
\end{proof}

Given a group object $G$ in $s\U$, a construction is given in \cite[Section 6]{Myself} for $\bar{W}G \in s\U$ such that $\pi_n(\bar{W}G) = \pi_{n-1}(G)$ for $n > 0$, and $\pi_0(\bar{W}G) = \{\ast\}$. In this construction $\bar{W}G_n = G_n \times \cdots \times G_0$.

Now if $G$ is a topological group in $\U$, $\disc G$ is a group object in $s\U$, and $\pi_n(\bar{W}G)$ is $G$ for $n = 1$ and trivial otherwise. Note too that if $G$ is totally path-disconnected, every $\bar{W}G_n$ is too. So we can apply Theorem \ref{tpd}.

\begin{thm}
\label{EMspace}
If $G$ is totally path-disconnected, $L\lvert \bar{W}G \rvert$ is an Eilenberg--Mac Lane space $K(G,1)$ for $G$.
\end{thm}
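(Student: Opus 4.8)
The plan is to feed $\bar{W}G$ directly into Theorem \ref{tpd} and then read off the homotopy group objects. As recorded just before the statement, when $G$ is a totally path-disconnected topological group in $\U$, the constant simplicial space $\disc G$ is a group object in $s\U$, the construction $\bar{W}G$ from \cite{Myself} has $\pi_n(\bar{W}G) = G$ for $n = 1$ and $\pi_n(\bar{W}G)$ trivial for $n \neq 1$, and each $\bar{W}G_n = G_n \times \cdots \times G_0$ is again totally path-disconnected, being a finite internal product of totally path-disconnected spaces. Hence the hypothesis of Theorem \ref{tpd} is met by $X = \bar{W}G$.

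First I would apply Theorem \ref{tpd} to obtain that $\underline{\Sing}\circ L\lvert\bar{W}G\rvert$ is weakly equivalent to $\bar{W}G$ in the regular structure on $s\U$. Next, since weak equivalences in the regular structure induce isomorphisms of the topological homotopy group objects (recalled in Sections \ref{tophomotopy} and \ref{model}), this yields $\pi_n(\underline{\Sing}\circ L\lvert\bar{W}G\rvert) \cong \pi_n(\bar{W}G)$ for all $n$, and therefore $G$ in degree $1$ and the trivial group object in every other degree. The remaining step, which carries the genuine content of the statement about the \emph{space} $L\lvert\bar{W}G\rvert$, is to identify the simplicial homotopy group object $\pi_n(\underline{\Sing}(Y))$ of the singular simplicial space (computed in the regular structure) with the topological homotopy group object $\pi_n(Y)$ of a space $Y \in \U$, the latter defined in Section \ref{tophomotopy} via the equivalence relation on $\underline{\U}(S^n, Y)$. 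This compatibility is the bridge between the simplicial framework in which Theorem \ref{tpd} is phrased and the statement, which concerns the homotopy groups of $L\lvert\bar{W}G\rvert$ itself; it follows from the adjunction $\lvert - \rvert \dashv \underline{\Sing}$ together with the construction of homotopy group objects in \cite{Myself}. With this identification in hand, $L\lvert\bar{W}G\rvert$ has $\pi_1 = G$ and all other homotopy group objects trivial, which is precisely the defining property of an Eilenberg--Mac Lane space $K(G,1)$.

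The main obstacle I anticipate is exactly this last identification, carried out with enough care that the resulting isomorphism $\pi_1(L\lvert\bar{W}G\rvert) \cong G$ holds in the category of $k$-groups --- that is, as group objects in $\U_{ex}$, and hence as $k$-groups --- rather than merely as abstract groups. This is the assertion singled out in the abstract, and it is where the topological and categorical enrichment must be tracked faithfully through $\underline{\Sing}$, the regular structure, and the $\bar{W}$ construction, so that the group structure carried by $\pi_1$ matches that of $G$ on the nose.
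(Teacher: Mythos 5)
Your proposal is correct and follows essentially the same route as the paper: the paper's proof consists precisely of the observations you make --- that each $\bar{W}G_n$ is totally path-disconnected so Theorem \ref{tpd} applies to $X = \bar{W}G$, giving a regular weak equivalence $\underline{\Sing}\circ L\lvert\bar{W}G\rvert \simeq \bar{W}G$, whence the homotopy group objects are $G$ in degree $1$ and trivial elsewhere. The bridging identification of $\pi_n(\underline{\Sing}(Y))$ with $\pi_n(Y)$ that you flag as the delicate point is left implicit in the paper, being part of the framework imported from \cite{Myself}, so your spelling it out is a reasonable (and compatible) elaboration rather than a divergence.
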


Similarly, when $G$ is totally path-disconnected and abelian, $L\lvert \bar{W}^nG\rvert$ is a $K(G,n)$.

Note that totally disconnected spaces are totally path-disconnected, so totally disconnected groups are included as a special case. Here is a totally path-disconnected topological group in $\U$ which is not totally disconnected.

\begin{example}
\label{freepseudo}
Let $X$ be the pseudo-arc, as in Example \ref{pseudo}. Let $F(X)$ be the free group object (in $\U$) on $X$: see \cite[Notation 2.10]{LaMartin} for the construction. $F(X)$ is not totally disconnected because $X$ is connected and the canonical map $X \to F(X)$ is a closed embedding by \cite[Theorem 2.12]{LaMartin}.

Suppose there is a non-constant map $p: [0,1] \to F(X)$. In the notation of \cite{LaMartin}, topologically $F(X)$ is the colimit of the closed subspaces $\pi(\bigsqcup_{i \leq n} M^i)$ by \cite[Proposition 2.16]{LaMartin}. So by compactness, there is some $n$ such that the image of $p$ is contained in $\pi(\bigsqcup_{i \leq n} M^i)$ but not $\pi(\bigsqcup_{i \leq n-1} M^i)$; in particular there is some open subspace $$U = p^{-1}(\pi(\bigsqcup_{i \leq n} M^i) \setminus \pi(\bigsqcup_{i \leq n-1} M^i))$$ of $[0,1]$ on which $p$ is not constant, or else for $x \in \bar{U}$ we would have $p(x) \in \pi(\bigsqcup_{i \leq n} M^i) \setminus \pi(\bigsqcup_{i \leq n-1} M^i)$, implying $U = \bar{U} = [0,1]$ and $p$ is not constant on $[0,1]$ by hypothesis. Then pick a closed interval $[a,b] \subseteq U$ on which $p$ is not constant.

From the construction, $M^i$ is the disjoint union of spaces of the form $$M_S = \{(x_1, \dots, x_n) \in (X \sqcup X^\ast)^n: x_i \in X \text{ for } i \in S, x_i \in X^\ast \text{ for } i \notin S\}$$ where $S$ ranges over subsets of $\{1, \ldots, n\}$. Observe that if we restrict to the subspaces $$M'_S = M_S \cap \pi^{-1}(\pi(\bigsqcup_{i \leq n} M^i) \setminus \pi(\bigsqcup_{i \leq n-1} M^i)),$$ we have $$\pi(\bigsqcup_{i \leq n} M^i) \setminus \pi(\bigsqcup_{i \leq n-1} M^i) = \bigsqcup_{S \subseteq \{1, \ldots, n\}} \pi(M'_S).$$ So $p([a,b])$ must be contained in some $\pi(M'_S)$.

It follows from \cite[Proposition 2.16]{LaMartin}, by the same argument as \cite[Corollary 2.18]{LaMartin}, that $\pi: M'_S \to F(X)$ is an embedding. So $p|_{[a,b}$ lifts to a non-constant map $p: [a,b] \to M'_S$, but $M'_S$ is totally path-disconnected as a subspace of a product of copies of $X$, giving a contradiction, and implying that $F(X)$ is totally path-disconnected.
\end{example}

As a final application, we return to singular homology. Generalise the definition of $\Delta$-complexes given in \cite[p.103]{Hatcher} to allow totally path-disconnected spaces of cells: as in the classical case, the definition ensures that such a $\Delta$-complex $X' \in \U$ is the geometric realisation of some $X \in s\U$ such that every $X_n$ is totally path-disconnected. Call such spaces generalised simplicial complexes. As in the classical case, we define the simplicial homology $H^\text{Simp}(X')$ of $X'$ to be the singular homology of $X$.

\begin{prop}
The singular and simplicial homology theories for generalised $\Delta$-complexes are naturally isomorphic.
\end{prop}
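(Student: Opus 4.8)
The plan is to show that the singular and simplicial homology of a generalised simplicial complex $X' = L\lvert X\rvert$ are both computed, after applying the left-heart homology functor $LH_\ast$, by one and the same homotopy colimit of free modules on the fat cells of $Q(X)$; here $X$ is cofibrant, so $L\lvert X\rvert = \lvert X\rvert = X'$. By definition the singular homology is $H_\ast(X') = LH_\ast \circ LF \circ \underline{\Sing}(X')$, while the simplicial homology is the homology $LH_\ast \circ LF(X)$ of the free module on the simplicial space $X$ itself. The comparison isomorphism will be induced by the (derived) unit $X \to \underline{\Sing} L\lvert X\rvert$ of the adjunction $\lvert - \rvert \dashv \underline{\Sing}$ — realised, as in Theorem \ref{tpd}, through a natural zigzag of weak equivalences; all the work is to show the induced map on $LH_\ast$ is an isomorphism, after which naturality is inherited.

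On the singular side, I would fix the open cover $C$ of $X'$ built in the proof of Theorem \ref{tpd} and apply Theorem \ref{excisionwe}: $LF(\underline{\Sing}(X'))$ is weakly equivalent, in the regular structure, to the homotopy colimit (in the compact Hausdorff structure) of $\{LF(\underline{\Sing}(U))\}_{U \in C'}$. It is crucial to route through Theorem \ref{excisionwe} here rather than naively applying $LF$ to the regular weak equivalence of Theorem \ref{singhoco}, since $F$ is only left Quillen for the compact Hausdorff structures and $LF$ need not preserve regular weak equivalences — overcoming exactly this failure is what Theorem \ref{excisionwe} is for. Every non-empty $U \in C'$ is homotopic to a totally path-disconnected fat cell $K$, and for such $K$ all singular simplices are constant, so $\underline{\Sing}(K) = \disc(K)$ and hence $LF(\underline{\Sing}(U)) \simeq LF(\disc K)$; moreover the indexing poset $C'$ is identified with the diagram $D'$ of Theorem \ref{tpd}. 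This yields $H_\ast(X') \cong LH_\ast\bigl(\hocolim_{D'} LF(\disc K)\bigr)$.

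On the simplicial side, I would invoke the Proposition preceding Theorem \ref{tpd}, which presents $Q(X)$ as the compact Hausdorff homotopy colimit of $D'$. Since $F$ is left Quillen, $LF$ preserves compact Hausdorff homotopy colimits, and the cells $\disc K$ are cofibrant, so $LF(X) = F(Q(X)) \simeq \hocolim_{D'} LF(\disc K)$ — literally the same homotopy colimit as on the singular side. Applying $LH_\ast$, and using that compact Hausdorff weak equivalences of chain complexes are regular weak equivalences so that $LH_\ast$ is invariant under them, gives $H^{\mathrm{Simp}}_\ast(X') \cong LH_\ast\bigl(\hocolim_{D'} LF(\disc K)\bigr)$, matching the singular computation.

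The main obstacle is less any single estimate than the careful bookkeeping required to verify that the two homotopy colimits really are taken over the same diagram $D'$ with identical vertices $LF(\disc K)$ — in particular that the homotopy equivalences $U \simeq K$ identify the finite-intersection poset $C'$ with the fat-cell diagram compatibly with $LF$. This is precisely the identification already carried out at the end of the proof of Theorem \ref{tpd}, and I would cite it directly rather than redo it. Naturality of the resulting isomorphism $H^{\mathrm{Simp}}_\ast \cong H_\ast$ then follows because both descriptions are realised by the natural zigzags coming from the homotopy colimit presentations and the derived unit, so the comparison is natural in $X'$, just as the inclusion of simplicial chains into singular chains is natural in the classical statement \cite[Theorem 2.27]{Hatcher}.
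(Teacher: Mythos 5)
Your proposal is correct and takes essentially the same route as the paper: both arguments identify the simplicial side ($LF(X) \simeq F(Q(X))$, via the hocolim presentation over the fat-cell diagram from the proof of Theorem \ref{tpd} and the fact that $LF$ preserves compact Hausdorff homotopy colimits) and the singular side (via Theorem \ref{excisionwe} applied to the cover $C'$, whose members are homotopic to totally path-disconnected fat cells so that $\underline{\Sing}(U) \simeq \disc(K)$) with one and the same homotopy colimit $\hocolim\{LF(\underline{\Sing}(U))\}_{U \in C'}$, and then conclude using that compact Hausdorff weak equivalences are regular weak equivalences, so $LH_\ast$ is unchanged. The paper's proof is simply a terser statement of this same chain of equivalences.
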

\begin{proof}
We showed in the proof of Theorem \ref{tpd} that $X$ is weakly equivalent, in the compact Hausdorff structure, to $\{\underline{\Sing}(U)\}_{U \in C'}$, where $C'$ is the open cover of $\lvert X \rvert$ described there. So $LF(X)$ is weakly equivalent in the compact Hausdorff structure to $LF(\{\underline{\Sing}(U)\}_{U \in C'})$, which is weakly equivalent to $LF(\underline{\Sing}(\lvert X \rvert))$ in the regular structure by Theorem \ref{excisionwe}. So all three have the same homology.
\end{proof}

\end{document}